\newtheorem{thm}{Theorem}[section]
 \newtheorem{cor}[thm]{Corollary}
 \newtheorem{lem}[thm]{Lemma}
 \newtheorem{prop}[thm]{Proposition}
 \theoremstyle{definition}
 \newtheorem{defn}{Definition}[section]
 \theoremstyle{remark}
 \newtheorem{rem}[thm]{Remark} 
 \newtheorem{ex}[thm]{Example}
 \numberwithin{equation}{section}
\renewcommand{\>}{\rangle}
\newcommand{\al}{\alpha}
\newcommand{\vphi}{\varphi}
\newcommand{\la}{\lambda}  
\newcommand{\de}{\delta}
\newcommand{\vep}{\varepsilon}
\newcommand{\ep}{\epsilon}
\newcommand{\ga}{\gamma}
\newcommand{\Ga}{\Gamma}
\newcommand{\Om}{\Omega}
\newcommand{\Si}{\Sigma}
\newcommand{\si}{\sigma}
\newcommand{\De}{\Delta}
\newcommand{\vka}{\varkappa}
\newcommand{\vth}{\vartheta}
\newcommand{\om}{\omega}
 \newcommand{\CC}{\mathbb{C}} 
 \newcommand{\EE}{\mathbb{E}} 
 \newcommand{\GG}{\mathbb{G}} 
\newcommand{\HH}{\mathbb{H}}  
\newcommand{\LL}{\mathbb{L}} \newcommand{\MM}{\mathbb{M}} 
\newcommand{\NN}{\mathbb{N}} 
\newcommand{\PP}{\mathbb{P}}
\newcommand{\RR}{\mathbb{R}}
\DeclareSymbolFontAlphabet{\mathbbm}{bbold}
\DeclareSymbolFontAlphabet{\mathbb}{AMSb}%
\DeclareMathAlphabet{\mathmybb}{U}{bbold}{m}{n}
\newcommand{\ab}{\alpha}
\newcommand{\one}{\mathbbm{1}}
\newcommand{\Ac}{\mathcal{A}}
\newcommand{\Cc}{\mathcal{C}}
\newcommand{\Fc}{\mathcal{F}}
\newcommand{\Ic}{\mathcal{I}}
\newcommand{\Lc}{\mathcal{L}}
 \newcommand{\Nc}{\mathcal{N}} \newcommand{\Oc}{\mathcal{O}}
\newcommand{\Tc}{\mathcal{T}}
\newcommand{\Yc}{\mathcal{Y}}
\newcommand{\Zc}{\mathcal{Z}}
\newcommand{\Af}{\mathfrak{A}}  
 \newcommand{\Hf}{\mathfrak{H}} 
 \newcommand{\Sf}{\mathfrak{S}}  
\newcommand{\Xf}{\mathfrak{X}} \newcommand{\Yf}{\mathfrak{Y}}
\newcommand{\af}{\mathfrak{a}} 
\newcommand{\hf}{\mathfrak{h}}
\newcommand{\Dr}{\mathrm{D}}
\newcommand{\Fr}{\mathrm{F}}
\newcommand{\Kr}{\mathrm{K}}
\newcommand{\Nr}{\mathrm{N}}
\newcommand{\Sr}{\mathrm{S}}
\newcommand{\rr}{\mathrm{r}}
\newcommand{\tr}{\mathrm{t}}
\newcommand{\ubf}{\mathbf{u}}
\newcommand{\vbf}{\mathbf{v}}
\newcommand{\wt}{\widetilde}
\newcommand{\wh}{\widehat}
\newcommand{\<}{\langle}
\newcommand{\uph}{\upharpoonright}
\newcommand{\imb}{\hookrightarrow}
\newcommand{\ii}{\mathrm{i}}
\newcommand{\ee}{\mathrm{e}}
\newcommand{\dd}{\mathrm{d}}
\newcommand{\pa}{\partial}
\newcommand{\disc}{\mathrm{disc}}
\newcommand{\ess}{\mathrm{ess}}
\newcommand{\sym}{\mathrm{sym}}
\newcommand{\Max}{{\max}}
\newcommand{\Min}{{\min}}
\DeclareMathOperator{\im}{Im}
\DeclareMathOperator{\re}{Re}
\DeclareMathOperator{\dom}{dom}
\DeclareMathOperator{\Dist}{dist}
\DeclareMathOperator{\Gr}{Gr}
\DeclareMathOperator{\Div}{div}
\DeclareMathOperator{\gradm}{\mathbf{grad}}
\DeclareMathOperator{\gradn}{\mathbf{grad}_0}
\DeclareMathOperator{\Divn}{\Div_0}
\DeclareMathOperator{\Var}{\mathrm{Var}}
\DeclareMathOperator{\Hom}{{\mathcal{LH}}}
\DeclareMathOperator{\Na}{\nabla}
\newcommand{\Ao}{A}
\newcommand{\A}{\mathcal{A}}
\newcommand{\gan}{\ga_{\mathrm{n}}}
\newcommand{\x}{\mathbf{x}}
\newcommand{\n}{\mathbf{n}}
\newcommand{\Hs}{\Hf}
\newcommand{\D}{G}
\newcommand{\paD}{{\pa \D}}
\newcommand{\DepaD}{{\De_\paD}}
\newcommand{\imp}{z}
\newcommand{\cross}{{\natural}}
\newcommand{\mul}{\mathcal{M}}
\newcommand{\mulp}{M}
\newcommand{\oCC}{{\overline{\CC}}}
\newcommand{\ove}{\overline}
\newcommand{\Lra}{\Leftrightarrow}
\newcommand{\sto}{{\shortto}}
\DeclareRobustCommand{\shortto}{%
  \mathrel{\mathpalette\short@to\relax}%
}
\newcommand{\short@to}[2]{%
  \mkern2mu
  \clipbox{{.3\width} 0 0 0}{$\m@th#1\vphantom{+}{\shortrightarrow}$}%
  }
\def\blfootnote{\xdef\@thefnmark{}\@footnotetext}
\begin{document}


\title{Sobolev multipliers and fractional Gaussian fields on Lipschitz boundaries with applications to deterministic and random acoustic systems}

\author{Illya M. Karabash}
\date{}
\maketitle


\blfootnote{Institute for Applied Mathematics, the University of Bonn, Bonn, Germany}
\blfootnote{E-mails: karabash@iam.uni-bonn.de, i.m.karabash@gmail.com}

\bigskip

\bigskip

\begin{center}\noindent\it{}
To Volodymyr Derkach on the occasion of his 75th birthday

and

to Mark Malamud on the occasion of his 76th birthday
\end{center}

\vspace{4ex}

\begin{abstract}
Motivated by Applied Physics and Photonics  studies of random resonators, 
we study   in the stochastic part of this paper  random acoustic operators in non-smooth bounded domains $\D \subset \RR^d$ and  introduce m-dissipative  impedance boundary conditions  containing (eigenfunction) fractional Gaussian fields.
The deterministic part of the paper constructs and studies the spaces of pointwise multipliers on Lipschitz boundaries $\paD$, as well as the spaces of Sobolev (distribution-type) multipliers on boundaries $\paD$ of better regularity.
These multipliers are used  as generalized impedance coefficients $\zeta$  in impedance boundary conditions 	$\zeta p    =  \n  \cdot \vbf  $  accompanying the 1st order acoustic system. We study the m-dissipativity of associated acoustic operators and the discreteness of their spectra aiming the main efforts on the weakest possible assumptions on the regularity of $\zeta$.
In order to pass from deterministic results to the randomization, we introduce fractional Gaussian fields (FGFs) on Lipschitz  boundaries $\paD$ and study their regularity. To this end, we prove that a rough Weyl-type asymptotics takes place for the Laplace-Beltrami  eigenvalues on arbitrary compact Lipschitz boundary $\paD$.
\end{abstract}

\vspace{1ex}
{\footnotesize
\noindent
MSC2020-classes: 
35F45   
47B44 
47B80   
58J90 
35J56   
60H25 
\\[0.5ex]
Keywords: maximal dissipative operator, discrete spectrum, essential spectrum, random boundary condition, pointwise multiplier, Browninan bridge, generalized random field, random operator, Weyl's law, compact resolvent,   extension theory, m-accretive operator 
}

\medskip

{\noindent\footnotesize \textsc{Acknowledgment.} 
The author is supported by the Heisenberg Programme (project 509859995) of the German Science Foundation (DFG, Deutsche Forschungsgemeinschaft), by the Hausdorff Center for Mathematics funded by the German Science Foundation under Germany's Excellence Strategy -- EXC-2047/1 -- 390685813, 
and by the CRC 1060 `The mathematics of emergent effects' funded by the German Science Foundation at the University of
Bonn. 
}

\section{\label{s:i}Introduction}

Presently, there is a substantial gap between the mathematical optimization  involving eigenvalues of dissipative wave equations  \cite{K13,OW13,KKV20,EK25} and the areas of Physics  where this type of nonselfadjoint spectral optimization problems has emerged. 
While the pioneering resonance optimization paper of  Harrell \& Svirsky \cite{HS86} seems to be motivated by the estimation of Schrödinger random resonances, the topic attracted a substantial attention   much later in connection with numerical  simulations  for photonic crystals and high-Q optical cavities   (see \cite{HBKW08,DMTSH14,KKV20,VS21} and references therein). 

The contemporary Applied Physics  studies of the Q-factor optimization often assume randomness of the structure surrounding the region where the resonance mode is predominantly localized. This helps to model the influence of small fabrication errors on the Q-factors $\frac{|\re \la|}{-2\im \la}$ of resonances $\la$ positioned near the real axis (high-Q resonances, see \cite{DMTSH14,VS21}).
 Rigorous mathematical models for such stochastic optimization problems are still not adequately developed and may require a substantial simplification in order to obtain qualitative mathematical  results.

The resonances of random structures are studied in Mathematical Physics mainly for discrete Schrödinger operators \cite{K16} or, more generally, on the level of nonselfadjoint random matrices \cite{FS15}. A few papers has considered asymptotics of continuation resonances for multidimensional random Schrödinger operators \cite{S14,AK21}.
In the context of dissipative Maxwell eigenproblems, the modeling of an uncertain surrounding by randomized boundary conditions on an artificial boundary was discussed in \cite{EK22}. Uncertain boundary conditions emerging in such models are linear and are supposed to describe the leakage of energy from the system. These boundary conditions are 
quite different from those of \cite{HO00}, where white noise enters as a boundary control (or source) term. 
Rigorous randomization of linear m-dissipate boundary conditions  was introduced for acoustic systems in the preceding paper of the author \cite{K26}, where the emphasis was put on non-local boundary conditions.

The present paper puts the main focus on  
local \emph{impedance boundary conditions} 
\begin{gather*} 
	\imp (x)  p (x)   =  \n (x) \cdot \vbf  (x) , \qquad  x \in \pa \D,
\end{gather*}
as well as their \emph{distributional analogues} that occurs to be convenient for randomization, e.g., by means of appropriate Gaussian fields.
Following Leis \cite{L13}, we write the acoustic system   in the Schrödinger form 
$	\ii \pa_t \Psi = \af_{\al,\beta} \Psi $,  where  
\begin{gather} \label{e:a0}
(\af_{\al,\beta} \Psi)(x) = \af_{\al,\beta}  \begin{pmatrix} \vbf (x) \\ p (x)\end{pmatrix} =  \frac1\ii\begin{pmatrix} \ab^{-1} \nabla p (x) \\
		\beta^{-1} \nabla \cdot \vbf (x) \end{pmatrix} , \quad x \in \D \subset \RR^d, 
\end{gather}
is the corresponding differential operation in $x$ 
with  spatially varying and uniformly positive material parameters \quad
 $\ab \in L^\infty (\D,\RR_\sym^{d \times d})$ \quad and \quad $\beta \in L^\infty (\D,\RR)$,
which play the role of the weights in the weighted phase space 	$\LL^2_{\ab,\beta} (\D)$ of the system (see Section \ref{s:Ac1}).
Engineeringly oriented studies often consider wave equations in domains with non-smooth (typically Lipschitz) boundaries $\paD$. 
The boundary $\paD$ in this paper is assumed to have either Lipschitz regularity, or $C^{k-1,1}$-regularity with an appropriate $k \in \NN$ depending on $d \ge 2$.
Any of these assumptions implies that the   outward normal unit vector $\n: \paD \to \RR^d$ is well-defined  as a vector-field $\n \in L^\infty  (\pa \D, \RR^d)$. The $L^p$-spaces on $\paD$ correspond to the standard surface measure $\dd \Si = \dd \Si_{\paD}$ of the $C^{0,1}$-boundary  $\paD$ (i.e., Lipschitz boundary), see \cite{GMMM11,G11}.


In the cases where the acoustic system emerges as a dimensionally reduced Maxwell system \cite{FK96}, 
it makes sense to consider following Shchukin and Leontovich (see \cite{YI18}) complex valued    $\imp:\paD \to \CC$.
Under certain additional conditions, an impedance coefficient $\imp:\paD \to \overline{\CC}_\rr$ with values in the complex right half-plane $ \overline{\CC}_\rr := \{z \in \CC : \re z \ge 0\}$ ensures dissipativity  or m-dissipativity of the associated acoustic operator 
(see Section \ref{s:Ac1} for definitions).
In Mechanics problems, the function $\imp (\cdot)$ is usually nonnegative and is called the (boundary) damping coefficient.
 Impedance boundary conditions 
with uniformly positive $L^\infty$-impedance coefficients are often  used as the main  class of absorption boundary conditions for acoustic and Maxwell systems \cite{JS21,EK22,PS22,V22,EK25}. Their m-dissipativity and the m-dissipativity of more general classes of absorbing boundary conditions were studied for acoustic systems in \cite{KZ15,JS21,S21,V22,PT24,K25,K26}. The m-dissipativity property is crucial for modeling of lossy resonators due to the following characterization \cite{P59,Kato,E12}:
\begin{multline*}
	\text{an operator $A:\dom A \subseteq \Xf \to \Xf$ is m-dissipative if and only if}
	\\ \text{$\ee^{-\ii A t}$ is a (strongly continuous) contraction semigroup.}
\end{multline*}

For randomization, it is convenient to replace the impedance coefficient $\imp (\cdot)$ by generalized
functions (distributions) $\zeta$ on $\paD$. 
To this end, we employ the theory of Sobolev (pointwise) multipliers. 
The classes of Sobolev multipliers $\MM [W^{s_1,p_1} (\RR^n) \to W^{s_2,p_2} (\RR^n)]$ between different  Sobolev spaces on $\RR^n$ (as well as  between Besov or  Triebel–Lizorkin spaces) have been a topic of active research, see, e.g., \cite{S67,MV95,KS02,MS04,MV04,NS06,MS09}.  

It is certainly not a goal of the present paper to take the whole extensive (and impressive) theory of multipliers on $\RR^n$ and carry it over to the case of the Lipschitz boundary  $\paD$ of $\D$. The goal is rather to find the part of this theory that is important for the study of m-dissipativity and randomization and to emphasize some differences in comparison with the $\RR^n$-case.

A transfer of the $\RR^n$-theory of Sobolev multipliers to Lipschitz boundaries $\paD$ meets serious difficulties   from the first steps,
and so, we consider on such boundaries  multipliers in a narrow pointwise sense (they are always $L^1(\paD,\CC)$-functions, see Section \ref{s:PM}). 

In order to come closer to spaces of distributions  and to analogies with the theory of  Sobolev  multipliers on $\RR^n$, we use in Section \ref{s:Gen} the stronger regularity assumption that $\paD$ is a $C^{k-1,1}$-boundary for $k > (d-1)/2$. 
Note that, in the 2-dimensional case $d=2$, this assumption is satisfied for Lipschitz boundaries automatically, and so,
the results of our two approaches overlap, see Sections \ref{s:2d} and \ref{s:RandAcOp}. 

In order to get reasonable models for dissipation resonances, and then, to randomize these models, one has to find classes of distribution-type  impedance coefficients that produce acoustic operators $\A$ with reasonable spectral  properties.
The most important of these properties are the m-dissipativity of $\A$, the resolvent compactness, and the discreteness of its spectrum (see the discussion in \cite{K26}).
From the point of view of the m-dissipativity, we show that multipliers $\zeta \in \MM [H^{1/2} (\paD) \sto H^{-s_2} (\paD)]$ between fractional Sobolev spaces of a positive regularity $1/2$ and a negative regularity $(-s_2)$ with large $s_2>0$ are of the main interest
 (see Sections \ref{s:MulpMd} and \ref{s:GenFImp}).
 
 On the other hand, the impedance coefficients from the space of 
compact Sobolev multipliers $ \MM^{\Sf_\infty} [H^{1/2} (\paD) \sto H^{-1/2} (\paD)]$
are important from the point of view of the study of 
the discrete spectrum $\si_\disc (\A)$  of $\A$ (see Section \ref{s:CompRes} for basic definitions).
In the case $d=2$, we connect the class $ \MM^{\Sf_\infty} [H^{1/2} (\paD) \sto H^{-1/2} (\paD)]$ with (fractional) Sobolev spaces $H^{-s} (\paD)$, $s \in (0,1/2)$, 
see \eqref{e:(d-1)/2} and Section \ref{s:2d}.

It is possible to meet sometimes the popular belief that a good partial differential  operator $T$, e.g., a Laplacian, in a bounded domain $\D$ with a regular enough boundary has a purely discrete spectrum for any reasonable boundary condition, reasonable, e.g., in the sense that the resolvent set $\rho (T)$ is non-empty. 
However, it is well-known (at least for the operator theory community) that this is actually not so, and that there exist selfadjoint boundary conditions for the Laplacian in a ball that produce a non-empty essential spectrum $\si_\ess (T)$.
For acoustic operators $\A$, it makes sense to study the resolvent compactness and the discreteness of the spectrum for the part $\A |_{\GG_{\ab,\beta}}$ of the operator in the reducing subspace $\GG_{\ab,\beta} = \LL^2_{\ab,\beta} (\D) \ominus (\HH_0 (\Div 0,\D) \oplus\{0\})$, see Section \ref{s:CompRes} for  explanations. Example \ref{e:Ess} considers some of the cases where   $\si_\ess (\A |_{\GG_{\ab,\beta}}) \neq \emptyset$.

In the case of random boundary conditions, the almost sure (a.s.) discreteness of of $\si (\A |_{\GG_{\ab,\beta}}) $ is important for the representation of 
the random spectrum $\si (\A |_{\GG_{\ab,\beta}}) $ via a proper stochastic point process in $\CC$. Such a representation ensures that a distance $\Dist (\si (\A), \la_*) $ to a fixed real frequency $\la_* \in \RR \setminus \{0\}$ is a random variable, and that the expectation
$\EE \Dist (\si (\A), \la_*)$ can be used as a cost function for optimization of resonances over various feasible families of deterministic material parameters. This is the simplest stochastic analogue of the deterministic cost function of  \cite{EK25}.

The distribution-valued random impedance coefficients are crucial for the randomization by means of (eigenfunction) fractional Gaussian fields. We define analogues of  fractional Gaussian fields (FGFs) on Lipschitz boundaries  using the $L^2 (\pa \D)$-orthonormal basis $\{Y_j\}_{j \in \NN}$ of 
real-valued eigenfunctions of the nonnegative Laplace-Beltrami operator $\De_\paD$ on $\paD$. The eigenfunctions are numbered in such a way that $\De_\paD Y_j (x)=  \mu_j Y_j (x)$ and $\{\mu_j\}_{j \in \NN}$ is a non-decreasing sequence (for the operator $\De_\paD$ in the case of $C^{0,1}$-boundary, see \cite{GMMM11} and also Sections \ref{s:m-disGIBC} and  \ref{a:W}). We denote by $b_0 = b_0 (\paD)$ the dimensionality $\dim \ker \DepaD$ of the kernel 
$\ker \DepaD := \{Y \in L^2 (\paD): \DepaD Y = 0 \}$ of 
$\DepaD$. Note that $b_0 \in \NN$ and that, for $1 \le j \le b_0$, the eigenfunctions $Y_j$ are locally constant and can be chosen in such a way that they are nonnegative (in the sequel, we always assume this  choice).

We define on $\paD$ an FGF with index $s$   as a formal  random series 
\begin{gather*} 
	\Xi_s (x,\om)  = \sum_{n=b_0+1}^{+\infty} \xi_n (\om) \mu_n^{-s/2} Y_n (x), \qquad x \in \paD, \quad \om \in \Om, 
\end{gather*}
where $\xi_n$ are independent identically distributed (i.i.d.) $\RR$-valued Gaussian random variables of unit variance on a certain complete probability space $(\Om,\Fc,\PP)$.
If $t < s-(d-1)/2$,
this  series rigorously define a random vector in a generalized Sobolev-type space $H^t_\DepaD$, see Sections \ref{s:m-disGIBC} and \ref{s:2d}
(for FGFs on compact Riemann $C^\infty$-manifolds, we refer to the recent overview 
\cite{CS25}).
In order to obtain this  convergence result for $\sum \xi_n  \mu_n^{-s/2} Y_n$ on an arbitrary compact Lipschitz boundary $\pa \D$, we prove for  $\paD$ the rough analogue \[
\mu_n  \asymp  n^{2/(d-1)} \quad \text{ as $n \to +\infty$}
\]	
of Weyl's asymptotics of the Laplace-Beltrami eigenvalues $\mu_n$, see Theorem \ref{t:W}.

As an application of our approach, we construct in Section \ref{s:RandAcOp} various families of random boundary conditions and associated acoustic operators. Let us give a particular  example.  Let $\{c_n\}_{n=1}^{b_0} \subset \ove \RR_+ := [0,+\infty)$ be a sequence of nonnegative constants, and let $\{\eta_n\}_{n=1}^{b_0}$ be a sequence  
of i.i.d. positive exponentially distributed random variables (the random variables  $\eta_n$
are assumed to be  non-degenerate in the sense that their expectations $\EE \eta_n $ are positive).

\begin{thm} \label{t:ExFGF}
Let $d=2$. Let us fix arbitrary constants $s>0$ and $c\in \RR$.
We define the (generalized) random impedance coefficient $\zeta$ by 
\[\textstyle
\zeta = c \Xi_s + \sum_{n=1}^{b_0} c_n  \eta_n Y_n , \quad \text{where  $\Xi_s  = \sum_{n=b_0+1}^{+\infty} \xi_n \mu_n^{-s/2} Y_n$ is an FGF as above.}
\]
Then $\zeta$ is a random vector in the space $\MM^{\Sf_\infty} [H^{1/2} (\paD) \sto H^{-1/2} (\paD)]$, and the associated with \eqref{e:a0} and  $\zeta   \ga_0 (p) =  \gan (\vbf)$ randomized  acoustic operator $ \A:\om \mapsto  \A_\om:\dom\Ac_\om \subset \LL^2_{\ab,\beta} (\D) \to \LL^2_{\ab,\beta} (\D)$ 
 has the following properties:
\item[(i)] The operator $ \A$ is a.s. m-dissipative and is weakly random in the resolvent sense.
\item[(ii)]  With probability 1, $\A |_{\GG_{\ab,\beta}}$ has a compact resolvent and $\si (\A |_{\GG_{\ab,\beta}}) = \si_\disc (\A |_{\GG_{\ab,\beta}})$.
\item[(iii)] The probability that $\A$ is selfadjoint is  either 1, or $0$. 
Namely, $\PP \{\A = \A^*\} =1$ if and only if $0= c_1 = \dots = c_{b_0}$.
\end{thm}

This theorem follows from the results of Section \ref{s:RandAcOp}, see Remark \ref{ex:ExFGF}. 

Since $\Xi_s$ becomes a white noise model for $s=0$, this result can be interpreted in the following way. We are not able to use in a meaningful way a white noise $\Xi_0$ as the most singular component of the impedance coefficient $\zeta$ saving simultaneously the m-dissipativity of $\A$, but, in the 2-dimensional case, we "almost do this" in the sense that arbitrary small values of $s>0$ are allowed in Theorem \ref{t:ExFGF}.
 

\textbf{Notation.} 
In what follows, $\Xf$, $\Xf_1$, and $\Xf_2$ are (complex) Hilbert spaces. 
The notation $T:\dom T \subseteq \Xf_1 \to \Xf_2$ means that a (linear) operator $T$ is considered as an operator from $\Xf_1$ to $\Xf_2$, whereas it is
defined on a domain (of definition) $\dom T  $ of $T$ that is a certain linear (possibly non-closed) subspace of $\Xf_1$.
If $\dom T = \Xf_1$, we sometimes write $T: \Xf_1 \to \Xf_2$. By $[\Xf_1,\Xf]_\vth$ we denote the spaces obtained by the complex interpolation, see \cite{T78} and \cite[Vol.2]{RS}.

An orthogonal sum of  Hilbert spaces $\Xf_1$ and $\Xf_2$ is denoted by $\Xf_1 \oplus \Xf_2$. By $\Gr T := \{\{f,Tf\} \ : \ f \in \dom T\} \subset \Xf_1 \oplus \Xf_2 $ we denote the graph of the operator $T$, and consider it as a normed space with the graph norm. We  use the natural identification of $\Gr T$ and   $\dom T$, so that, $\dom T$ also becomes a normed space with the graph norm.

By $\Lc (\Xf_1,\Xf_2)$ we denote the  Banach space of bounded operators $T : \Xf_1 \to \Xf_2$, while 
$\Hom (\Xf_1,\Xf_2)$ is the set of \emph{linear homeomorphisms} from $\Xf_1$ to $\Xf_2$.
Besides, $\Lc (\Xf) := \Lc (\Xf,\Xf)$ and $\Hom (\Xf) := \Hom (\Xf,\Xf)$.

By $I_\Xf$ (by $0_{\Lc(\Xf)}$) the identity operator (resp., the zero operator) in the space $\Xf$ is denoted, although  the subscript is dropped  if the space is clear from the context. 

The spectrum an operator $T:\dom T \subseteq \Xf \to \Xf$ is denoted by $\si (T)$.
The notation $\rho (T) := \CC \setminus \si (T)$ stands for the resolvent set.
A symmetric operator $T$ is called nonnegative (and we write $T \ge 0$) if $(Tf|f)_{\Xf} \ge 0$ for all $f \in \dom T$. The notation $T_1 \le T_2$ means $T_2 -T_1 \ge 0$.  A symmetric operator is uniformly positive if $T \ge \de I$ for a certain constant $\de>0$. 


\section {Acoustic operators in bounded Lipschitz domains}
\label{s:Ac}

\subsection{Acoustic systems, trace maps, and functional spaces}
\label{s:Ac1}

Let $\D$ be a Lipschitz domain in $\RR^d$, where $d \ge 2$. This means that  $\D$ is a non-empty bounded open connected subset of $\RR^d$ and that its  boundary $\pa \D$ is of the
$C^{0,1}$-regularity \cite{GMMM11,G11}, i.e., $\paD$ is a Lipschitz (continuous) boundary. This assumption implies, in particular, that the   outward normal unit vector-field 
$
\n \in L^\infty  (\pa \D, \RR^d)
$ is well-defined almost everywhere (a.e.) on $\partial \D$   with respect to (w.r.t.) the surface measure $\dd \Si$ of $\paD$.

By $L^2 (\D,\CC^d)$ we denote the Hilbert space of $\CC^d$-valued vector fields in $\D$ equipped with the inner product 
\[
(  \ubf | \vbf )_{L^2 (\D,\CC^d)} =  \int_{\D} \ubf \cdot \overline{\vbf}   =  \int_{\D} (  \ubf  | \vbf  )_{\CC^d}  .
\] 
For $s > 0$, we use  the  complex Hilbertian Sobolev spaces 
$H^s ( \D) = W^{s,2} ( \D, \CC)$ and $H^s_0 ( \D) = W^{s,2}_0 ( \D, \CC)$, as well as the corresponding spaces of distributions $H^{-s} ( \D) $ that are 
dual to  $H^s_0 ( \D) $ w.r.t. the pivot space $L^2 (\D) =L^2 (\D,\CC) $.

On $\pa \D$, we use the spaces
$L^q (\pa \D) = L^q  (\pa \D, \CC)$ of 
$\CC$-valued $L^q$-functions with $1 \le q \le \infty$.
For $s \in (0,1]$,  one defines  the complex Hilbertian Sobolev spaces of $\CC$-valued  functions $H^s (\pa \D) $ via localization  and pullback \cite{G11,GMMM11}. This approach leads to an infinite family of inner products and associated equivalent norms on $H^s (\pa \D) $. We assume that, for each $s $, one of such inner  products that makes $H^s (\paD) $ a Hilbert space is fixed.
The Hilbert spaces $H^{-s} (\pa \D) $ are defined as dual spaces to  $H^{s} (\pa \D) $  w.r.t.  $H^0 (\pa \D) =(L^2 (\pa \D), \ (\cdot|\cdot)_{L^2 (\paD)})$,
where $(f,g)_{L^2 (\paD)} = \int_\paD f\ove g $.

By  $\RR_{\sym}^{d\times d}$ we denote the Hilbert space of real symmetric $d\times d$-matrices equipped with the  Frobenius norm. 
The material parameters $\ab  = (\ab_{j,k} )_{j,k=1}^d \in L^\infty (\D, \RR_{\sym}^{d\times d})$ and $\beta \in L^\infty (\D,\RR)$ are assumed to be uniformly positive functions.
In the case of $\ab (\cdot)$, this means that, for a certain real constant $\al_0>0$,  $\ab (\x) \ge \al_0 I $ for almost all (a.a.) $\x \in \D$. 
By $\ab^{-1} (\cdot)$, we denote the pointwise inversion  $ (\ab (\x))^{-1}$, $\x \in \D$, for the matrix-function $\ab (\cdot)$. 
For Banach spaces $\Yf_1$, $\Yf_2$, and a (linear) operator $T:\dom T \subseteq \Yf_1 \to \Yf_2$, we consider the  domain  $\dom T  \subseteq \Yf_1$ of $T$ as a normed space with  the graph norm.
By $\Lc (\Yf_1,\Yf_2)$ we denote the  Banach space of bounded operators $T : \Yf_1 \to \Yf_2$ with $\dom T =\Yf_1$. 

For the following statements about divergence, gradient, and trace operators,
we refer to \cite{G11,L13,KZ15,S21,K26}.
 The scalar trace 
 \[\text{$\ga_0 : p \mapsto p\!\uph_{ \pa \D}$ is understood as a continuous operator $\ga_0 \in \Lc (H^1 (\D), H^{1/2} (\pa \D))$.}
 \]
The map $ f \mapsto \nabla f $
defines two closed gradient operators with two different domains 
\[
\gradm : H^1 (\D) \subset L^2 (\D) \to L^2 (\D,\CC^d), \quad \gradn : H_0^1 (\D) \subset L^2 (\D) \to L^2 (\D,\CC^d),
\]
where $H_0^1 (\D) = \{ f \in H^1 (\D) : \ \ga_0 f = 0 \}$.
The adjoint of $(-1)\gradn$ is the divergence operator 
\[
\Div : \ubf \mapsto \nabla \cdot \ubf, \quad
\Div : \HH (\Div, \D) \subset L^2 (\D,\CC^d) \to L^2 (\D)
\]
with the domain 
\[
 \HH (\Div, \D) := \{ \ubf \in L^2 (\D,\CC^d)  : \nabla \cdot \ubf \in L^2 (\D)\} ,
\] 
which equipped with the associated graph norm becomes a Hilbert space.
The normal trace is understood as a continuous operator from $\HH (\Div, \D)$ to $H^{-1/2} (\pa \D)$:
\[
\text{$
\gan : \ubf  \mapsto \n \cdot \ubf \uph_{\pa \D}$, \qquad  $\gan \in \Lc (\HH (\Div, \D), H^{-1/2} (\pa \D))$.}
\]
The operator $\Div_0 : \HH_0 (\Div,\D) \subset L^2 (\D,\CC^d)   \to L^2 (\D)$ 
defined by $\Div_0 := \gradm^*$
is a restriction of the operator $\Div$ to 
\[
\text{ $\HH_0 (\Div,\D) = \{\ubf \in \HH (\Div, \D) \ : \ \gan (\ubf) = 0\}$.}
\]

The  Hilbert space  $ \LL^2_\ab = \LL^2_\ab (\D)$ equals   
$ (L^2 (\D,\CC^d),  \| \cdot \|_{\LL^2_\ab} ) $, where the  weighted norm $ \| \cdot \|_{\LL^2_\ab}$ is defined by
$  \| \vbf  \|_{\LL^2_\ab}^2 =  ( \ab \vbf | \vbf)_{L^2 (\D,\CC^d) }$.
We use also the weighted Hilbert space of $\CC$-valued functions 
$L^2_\beta = L^2_\beta (\D)$ with the norm
$\| p  \|_{L^2_\beta} :=  ( \beta p | p)_{L^2 (\D) }^{1/2} $.

Following \cite{L13}, the \emph{`energy space'} $\LL^2_{\ab,\beta} (\D) $ is introduced as  
the orthogonal sum 
\[
\LL^2_{\ab,\beta} = \LL^2_{\ab,\beta} (\D)  := \LL^2_{\ab} (\D) \oplus L^2_\beta (\D) .
 \]
The energy space   $\LL^2_{\ab,\beta} (\D)$ is the phase space where we consider 
the acoustic evolution equation $\ii \pa_t \Psi = \af_{\al,\beta} \Psi$ with 
the formal spatial differential operation 
\begin{gather*}
 \af_{\al,\beta} \Psi (x) = \af_{\al,\beta}  \begin{pmatrix} \vbf (x) \\ p (x) \end{pmatrix} =  \frac1\ii\begin{pmatrix} (\ab(x))^{-1} \nabla p (x)  \\
 (\beta(x))^{-1} \nabla \cdot \vbf (x) \end{pmatrix} , \qquad x \in \D. 
\end{gather*}
The  operation $\af_{\al,\beta}$ is symmetric w.r.t. the inner product $(\cdot,\cdot)_{\LL^2_{\ab,\beta}}$ of  $\LL^2_{\ab,\beta} (\D)$. Here and below partial differential operators  in $\D$ are considered as operators in $\LL^2_{\ab,\beta} (\D)$, i.e., their symmetry, m-dissipativity, etc. are understood in the sense of $(\cdot,\cdot)_{\LL^2_{\ab,\beta}}$.

Let us  introduce the closed symmetric acoustic operator 
\begin{gather*}
 \A_\Min \begin{pmatrix} \vbf \\ p \end{pmatrix} = \frac1\ii \begin{pmatrix} 0 &  \ab^{-1} \gradn \\
 \beta^{-1} \Divn & 0 \end{pmatrix} \begin{pmatrix} \vbf \\ p \end{pmatrix} 
\end{gather*}
with (natural) minimal  domain $\dom \A_\Min =  \HH_0 (\Div,\D) \times  H_0^1 (\D) $. 
The adjoint operator $\A_\Max = \A_\Min^*$ is also associated with  $\af_{\ab,\beta}$, but has the maximal  domain  $\dom \A_\Max = \HH (\Div,\D) \times H^1 (\D)$.

\subsection{M-dissipativity and generalized impedance  conditions}
\label{s:GIBC}

In the conventions of \cite{E12,EK22,K26},  an  operator $T:\dom T \subseteq \Xf \to \Xf$ in a Hilbert space $\Xf$ is called \emph{dissipative 
(accretive)} if $\im (Tf|f)_{\Xf} \le 0$ (resp., $\re (Tf|f)_{\Xf} \ge 0$) 
for all $f \in \dom T$.
Clearly, $T$ is accretive if and only if $(-\ii) T$ is dissipative. 

An  operator $T:\dom T \subseteq \Xf \to \Xf$ is called  \emph{m-dissipative} if $\CC_+ := \{z \in \CC : \im z >0\}$ 
is a subset of its resolvent set $\rho (T)$
and 
$\| (T- z)^{-1}\| \le (\im z)^{-1}$ for all $z \in \CC_+ $  (m-dissipative operators are always dissipative, see, e.g., \cite{Kato}). 
An operator $T$ is called \emph{m-accretive} if $(-\ii) T$ is m-dissipative. A closable operator $T$ is called \emph{essentially m-dissipative (essentially m-accretive)} if its closure $\ove T$
is m-dissipative (resp., m-accretive). An  operator $K \in \Lc (\Xf)$ is called a \emph{contraction} if $\| K \| \le 1$.

The role of m-dissipative operators in the modeling of lossy systems is shown by the following result \cite{P59,Kato,E12}:
	an operator $T$ is m-dissipative if and only if
	$(-\ii) T$ is a generator of a contraction semigroup.

A boundary condition 
$\Zc  \ga_0 (p) =  \gan (\vbf)  $, where  
$\Zc:\dom \Zc \subseteq H^{1/2} (\pa \D)\to H^{-1/2} (\pa \D)$,
is called 
a \emph{generalized impedance boundary condition} (GIBC) \cite{HJN05,PT24,K26,K25}.
We use in this paper only GIBCs of Dirichlet-to-Neumann-type (DtN-type), cf. \cite{HJN05}.
The operator $\Zc$  is called the \emph{impedance operator} (of DtN-type). 
Other classes of boundary conditions for acoustic systems, as well as related  questions of well-posedness and m-dissipativity, were considered in \cite{HJN05,KZ15,S21,V22,PT24,HKW25,K26} (for  Maxwell systems, see also \cite{LL04,YI18,S21,EK22,EK25} and the references therein).

\begin{defn}[\cite{HJN05,KZ15,PT24,K26,K25}] \label{d:BC}
Consider an operator $\Zc:\dom \Zc \subseteq H^{1/2} (\pa \D)\to H^{-1/2} (\pa \D)$.
The acoustic operator $\Ac_\Zc$ associated with the GIBC of DtN-type $\Zc  \ga_0 (p) =  \gan (\vbf)  $
is  the restriction  $ \A_\Zc = \A_\Max \uph_{\dom \A_\Zc}$
of $\A_\Max$ to 
\[
\dom  \A_\Zc = \{ \{\vbf,p\} \in \dom \A_\Max \ : \ \Zc  \ga_0 (p) =  \gan (\vbf)  \text{ in $H^{-1/2} (\paD)$} \} .
\]
The  boundary condition $\Zc  \ga_0 (p) =  \gan (\vbf)  $ is called dissipative or m-dissipative 
 if the associated acoustic operator $\A$ is dissipative or m-dissipative, respectively. 
\end{defn}

The classical \emph{impedance boundary condition} \cite{HJN05,JS21,K25}
\begin{gather*} 
\imp  \ga_0 (p) =  \gan (\vbf)  , 
\end{gather*}
corresponds to the particular case where $\Zc$ is an operator of  multiplication on a function $z(x)$, $x\in \paD$.
It is an analogue of the Leontovich boundary condition for Maxwell system \cite{LL04,YI18,EK22,PS22,EK25}.
The \emph{impedance coefficient} $\imp$  is usually assumed to be a uniformly positive  $L^\infty$-function.
This assumption ensures the m-dissipativity of the impedance boundary condition and the discreteness of the nonzero spectrum $\si (\cdot) \setminus \{0\}$ for acoustic and Maxwell operators \cite{LL04,JS21,EK22,PS22,EK25}.
For acoustic systems,  the uniform positivity can be replaced  by the assumption that $\imp $ belongs to  $L^{q} (\paD, \overline{\CC}_\rr)$ with $q>d-1$ \cite{K26}, where $L^{q} (\paD, \overline{\CC}_\rr)$ is the space  of $L^{q}$-functions with values in $ \overline{\CC}_\rr := \{\la \in \CC : \re \la \ge 0\}$  (the case of a Maxwell system is substantially more difficult \cite{EK22,PS22,EK25}).

One of the goals of the present paper is to relax the assumptions on $\imp (\cdot)$ and, in particular, to consider distributions as impedance coefficients 
 in such a way that the associated acoustic operator $\A$ remains  m-dissipative and, possibly, has other good spectral properties.
In the case where $\imp (\cdot)$ is not an $L^\infty (\paD)$-function, the difficulty is
that  the operator of the multiplication on $\imp (\cdot)$ may have many non-equivalent interpretations, see Sections \ref{s:MulpMd}, \ref{s:GenFImp}, and \ref{s:dis}. 
It is not a priori clear which of these interpretations produce m-dissipative acoustic operators.

\subsection{Laplace-Beltrami operators and m-dissipative GIBCs.}
\label{s:m-disGIBC}

By  
$\<\cdot,\cdot\>_\paD$, we denote the (sesquilinear) $L^2 (\paD)$-pairing between spaces 
$H^s (\paD)$ and $ H^{-s} (\paD)$, $s \in [-1,1]$, that is generated by the  continuous extension of the inner product $(\cdot|\cdot)_{L^2 (\paD)}$.
For an operator  $T: \dom T \subseteq H^s (\paD) \to H^t (\paD)$ with arbitrary $s,t \in [-1,1]$, we use  (see \cite{GMMM11,EK22} and  Appendix \ref{s:GenAcc}) the adjoint operator
\[ \text{ $T^\cross: \dom T^\cross \subseteq H^{-t} (\paD) \to H^{-s} (\paD)$ \ w.r.t. the pairing $\<\cdot,\cdot\>_\paD$}
\]
and call $T^\cross$ the $\cross$-adjoint (or natural adjoint) operator to $T$.
If $s=-t$ and $T=T^\cross$, we say that $T$ is $\cross$-selfadjoint operator.

The 
 nonnegative selfadjoint   Laplace-Beltrami  operator
 \[
 \text{
 $ \De_\paD: \dom (\De_\paD) \subset L^2 (\pa \D) \to L^2  (\pa \D) $}
 \]  
 on $\paD$ has  a purely discrete spectrum
 $\si (\De_\paD) = \si_\disc (\DepaD) \subset \ove \RR_+$ \cite{GMMM11} (for  the definition of $\si_\disc (\cdot)$, see Section \ref{s:CompRes}).
 
 For all $s>0$,   the Hilbert space
 $H^s_\DepaD := \dom (\De_\paD^{s/2})$ 
is assumed to be equipped with the graph norm 
of $\De_\paD^{s/2}$. We define 
$H^{-s}_\DepaD$ as the dual space to $H^s_\DepaD$ w.r.t. the pivot space $H^0_\DepaD := L^2 (\paD)$.
Then $H^s_\DepaD$ is a Hilbert space for every $s \in \RR$ and 
\begin{gather}\label{e:HsDeImbImb}
\text{the compact dense embeddings $ H^s_\DepaD \imb \imb H^t_\DepaD$ hold for all $t <s$ \cite{GMMM11}.}
\end{gather}
It follows from the results of \cite{GMMM11} that 
\begin{gather} \label{e:HsDe=Hs}
\text{for $s \in [-1,1]$, \qquad  $ H^s_\DepaD = H^s (\paD) $ \qquad (up to equivalence of norms),}\\
\text{and } \qquad H^{t_0(1-\vth)+t_1\vth}_\DepaD = [H^{t_0}_\DepaD , H^{t_1}_\DepaD ]_\vth \qquad \text{ for all $t_0,t_1 \in \RR$ and  $\vth \in [0,1]$,}
 \label{e:IntHs-sDe}
\end{gather}
where $[\Yf_0,\Yf_1]_\vth$ denotes the interpolation space produced by 
the complex interpolation (for basic facts concerning interpolation, see, e.g., \cite{T78}).

The definition of the sesquilinear pairing $\<f,g\>_\paD$ introduced earlier  for $f \in H^s (\paD)$ and $g \in H^{-s} (\paD)$ with $s \in [-1,1]$, can be extended to 
the sesquilinear pairings  
$\<f,g\>_\paD$ of $f \in H^s_\DepaD $ and $g \in H^{-s}_\DepaD$ with arbitrary $s \in \RR$.
We keep for this extension the same notation $\<\cdot,\cdot\>_\paD$ and, as above, consider the associated $\cross$-adjoint operators $T^\cross$ to $T:\dom T \subseteq H^s_\DepaD \to H^t_\DepaD $ with $s,t \in \RR$, see Appendix \ref{s:GenAcc}.

By $\Hom (\Xf_1,\Xf_2) $ we denote the set of \emph{linear homeomorphisms} between   Hilbert spaces $\Xf_1$ and $\Xf_2$.
For all $s,t \in \RR$, $c>0$, and for  the positive selfadjoint operator $\De_\paD+cI$,
the eigenfunction expansion  of $\De_\paD$ implies that  the fractional power 
 \[
 (\De_\paD+cI)^{s/2}: \dom (\De_\paD+cI)^{s/2} \subseteq L^2 (\pa \D) \to L^2  (\pa \D) 
 \] 
 can be modified via an appropriate extension by continuity and  a restriction to $H^t_\DepaD$
 in order  to produce the linear homeomorphism
$
(\De_\paD+c)^{s/2}_{H^t \shortto H^{t-s}} \in \Hom (H^t_\DepaD, H^{t-s}_\DepaD) .
$
The corresponding $\cross$-adjoint operator is in the same class:
\begin{gather*} \label{e:Des2}
\left((\De_\paD+c)^{s/2}_{H^t \shortto H^{t-s}}\right)^\cross = (\De_\paD+c)^{s/2}_{H^{-t+s} \shortto H^{-t}} .
\end{gather*}

Recall that an extension $ \wh T : \dom T \subseteq \Xf_1 \to \Xf_2 $ of $T : \dom T \subseteq \Xf_1 \to \Xf_2 $ is called a \emph{proper extension} if $\dom \wh T \supsetneqq \dom T$. 
The following definition is a natural generalization of notions of accretive and nonnegative operators, see \cite{GMMM11,EK22}. 

\begin{defn} \label{d:Gac}
For $s \in \RR$, consider an operator $T:\dom T \subseteq H^s_\DepaD \to H^{-s}_\DepaD $.
\item[(i)]
The operator $T$ is called  \emph{accretive}   if
  $\re \<T f , f \>_\paD \ge 0 $ for all $f \in \dom T$.
\item[(ii)] The operator $T$ is called  \emph{nonnegative}  if 
 $\<T f , f \>_\paD \ge 0 $ for all $f \in \dom T$. 
\item[(iii)] An accretive (nonnegative) operator $T$ is called \emph{maximal accretive} (resp., maximal nonnegative) if it has no proper extensions $\wh T$
in the same class.
\end{defn}

This definition is not sensitive to a change of the norms in
$H^{\pm s}_\DepaD$ and  can be applied also to the spaces $H^{\pm s} (\paD)$ with $s \in [-1,1]$,  see \eqref{e:HsDe=Hs}.
An important technical tool for the whole paper is the following theorem, which is an combination and  adaptation  of several results of  \cite{P59,EK22,K26,K25}.
 
\begin{thm}[cf. \cite{EK22,K25}] \label{t:DtN-GIBC}
For an operator $\Zc:\dom \Zc \subseteq H^{1/2} (\pa \D)\to H^{-1/2} (\pa \D)$, let $\A_\Zc$ be the acoustic operator associated with $\Zc  \ga_0 (p) =  \gan (\vbf)  $ (see  Definition \ref{d:BC}).
Then the following statements are equivalent:
\begin{itemize}
\item[(i)] the operator $\A_\Zc$ is m-dissipative in $\LL^2_{\ab,\beta} (\D) $;
\item[(ii)] $
(\De_\paD+1)^{-1/4}_{ H^{-1/2} \shortto H^0 } \ \ \Zc \ \ (\De_\paD+1)^{-1/4}_{H^0 \shortto H^{1/2}}  $
is an  m-accretive operator in $L^2 (\paD)$;
\item[(iii)] $\Zc$ is an accretive  operator and there exists a densely defined 
accretive operator 
$Q : \dom Q \subseteq H^{1/2} (\pa \D)\to H^{-1/2} (\pa \D) $
such that $\Zc = Q^\cross$;
\item[(iv)] $\Zc$ is maximal accretive and densely defined. 
\end{itemize}
Moreover, $\A_\Zc$ is selfadjoint if and only if $\Zc^\cross = - \Zc$.
\end{thm}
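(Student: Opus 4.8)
The plan is to reduce the whole four-way equivalence to a single Green identity for $\A_\Max$ together with the transport of the problem to the flat space $L^2(\paD)$ by the fractional powers of $\DepaD$, and then to read off the selfadjointness criterion from the same Green identity. First I would record the integration-by-parts formula: for $\Psi=\{\vbf,p\}\in\dom\A_\Max$,
\begin{gather*}
\im (\A_\Max \Psi | \Psi)_{\LL^2_{\ab,\beta}} = -\,\re\,\<\ga_0 (p),\gan (\vbf)\>_\paD ,
\end{gather*}
which follows from the form of $\af_{\ab,\beta}$, the definitions of $\ga_0,\gan$, and the $H^{1/2}$--$H^{-1/2}$ duality $\<\cdot,\cdot\>_\paD$. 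On $\dom\A_\Zc$ one has $\gan(\vbf)=\Zc\ga_0(p)$, so, writing $f=\ga_0(p)$ and using the conjugate symmetry of the pairing, $\im(\A_\Zc\Psi|\Psi)=-\re\<\Zc f,f\>_\paD$. Thus \emph{dissipativity} of $\A_\Zc$ is exactly \emph{accretivity} of $\Zc$ in the sense of Definition \ref{d:Gac}, provided $f=\ga_0(p)$ runs over all of $\dom\Zc$ as $\{\vbf,p\}$ runs over $\dom\A_\Zc$; this surjectivity of the traces on $\dom\A_\Max$ is taken from \cite{K26,K25}.

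Next I would set $R:=(\De_\paD+1)^{-1/4}_{H^0 \shortto H^{1/2}}\in\Hom(L^2(\paD),H^{1/2}(\paD))$, so that the displayed $\cross$-adjoint relation gives $R^\cross=(\De_\paD+1)^{-1/4}_{H^{-1/2} \shortto H^0}$ and the operator in (ii) equals $\wt{\Zc}:=R^\cross\,\Zc\,R$. A direct use of the $\cross$-adjoint relation yields the transport identities
\begin{gather*}
(\wt{\Zc} f | f)_{L^2(\paD)} = \<\Zc (Rf),Rf\>_\paD , \qquad \wt{\Zc}^{\,*}=R^\cross\,\Zc^\cross\,R ,
\end{gather*}
for $f\in\dom\wt{\Zc}=R^{-1}\dom\Zc$, where $(\cdot)^*$ is the Hilbert-space adjoint in $L^2(\paD)$ and $(\cdot)^\cross$ the $\cross$-adjoint. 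Since $R$ is a homeomorphism with $R(L^2(\paD))=H^{1/2}(\paD)$, the first identity shows that accretivity, dense definiteness, and the ordering defining maximality all pass between $\Zc$ and $\wt{\Zc}$, while the second intertwines the two notions of adjoint.

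For (i)$\Leftrightarrow$(ii) I would combine the Green identity with the abstract description of m-dissipative restrictions of $\A_\Max$ from the boundary-triple analysis of \cite{EK22,K26,K25}: $\A_\Zc$ is m-dissipative iff $\Zc$ is accretive together with a maximality (range) condition, and, by the transport identity, this is precisely m-accretivity of $\wt{\Zc}$ in $L^2(\paD)$ in the classical sense of Phillips \cite{P59}. The remaining equivalences (ii)$\Leftrightarrow$(iii)$\Leftrightarrow$(iv) follow by pulling classical Hilbert-space facts back through $R$. Indeed, $\wt{\Zc}$ is m-accretive iff it is densely defined, accretive and has accretive adjoint; via $\wt{\Zc}^{\,*}=R^\cross\Zc^\cross R$ and $\Zc=(\Zc^\cross)^\cross$ this gives the representation $\Zc=Q^\cross$ of (iii) with $Q:=\Zc^\cross$ densely defined and accretive, and conversely. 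Likewise $\wt{\Zc}$ is m-accretive iff it is maximal accretive, and because the transport preserves the accretive ordering and density, this is equivalent to $\Zc$ being maximal accretive and densely defined, which is (iv).

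Finally, for the selfadjointness claim I would identify $\A_\Zc^{\,*}$ as a restriction of $\A_\Max$. The sesquilinear Green identity gives, for $\Psi=\{\vbf,p\},\Phi=\{\vbf',p'\}\in\dom\A_\Max$,
\begin{gather*}
(\A_\Max\Psi|\Phi)_{\LL^2_{\ab,\beta}}-(\Psi|\A_\Max\Phi)_{\LL^2_{\ab,\beta}}=\tfrac1\ii\big(\<\ga_0(p),\gan(\vbf')\>_\paD+\<\gan(\vbf),\ga_0(p')\>_\paD\big) ,
\end{gather*}
so $\Phi\in\dom\A_\Zc^{\,*}$ iff this boundary term vanishes for all $\{\vbf,p\}\in\dom\A_\Zc$. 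Substituting $\gan(\vbf)=\Zc\ga_0(p)$, using the definition of $\Zc^\cross$, and the density of $\dom\Zc$, this becomes the boundary condition $\gan(\vbf')=-\Zc^\cross\ga_0(p')$; hence $\A_\Zc^{\,*}=\A_{-\Zc^\cross}$. Since $\Zc\mapsto\A_\Zc$ is injective (again by surjectivity of the traces), $\A_\Zc=\A_\Zc^{\,*}$ holds iff $\Zc=-\Zc^\cross$, i.e.\ $\Zc^\cross=-\Zc$. The step I expect to be the main obstacle is (ii)$\Leftrightarrow$(iii): making the Gelfand-triple analogue of ``m-accretive $=$ densely defined, accretive, with accretive adjoint'' rigorous forces one to control closedness and closability of $\wt{\Zc}$ and the density of $\dom\Zc^\cross$, so that the identity $\wt{\Zc}^{\,*}=R^\cross\Zc^\cross R$ and the representation $\Zc=(\Zc^\cross)^\cross$ are genuine rather than merely formal.
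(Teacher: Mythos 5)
Your proposal is correct and follows essentially the same route as the paper: the paper also proves this by combining the m-boundary tuple $(H^{1/2}(\paD), L^2(\paD), H^{-1/2}(\paD), \wh\ga_0, (-\ii)\wh\gan)$ for $\A_\Max$ (your Green identity plus trace surjectivity) with the transport $\Zc \mapsto V^\cross \Zc V$ via the fractional powers $(\De_\paD+1)^{\mp 1/4}$ to reduce everything to Phillips' classical theory of m-accretive operators, citing the m-dissipativity characterization of restrictions of $\A_\Max$ from \cite{EK22,K26,K25} at exactly the point where you do. The only cosmetic difference is that you re-derive the selfadjointness criterion by computing $\A_\Zc^* = \A_{-\Zc^\cross}$ directly, whereas the paper cites this as part of the abstract result (Proposition~\ref{p:CrMDis}) from \cite{EK22}; the closedness subtlety you flag in (ii)$\Leftrightarrow$(iii) is precisely what the paper's Theorem~\ref{t:CrM-acrGen} packages, with closedness built in through the representation $\Zc = Q^\cross$.
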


The proof of this theorem is given in Appendix \ref{s:ProofA}.

\section{Pointwise multipliers  on $\paD$ and m-dissipativity}
\label{s:PM+M-d}

Our 2nd main tool is the family of spaces of pointwise multipliers between Hilbert spaces   $H^{t_1}_{\DepaD} $ and $H^{t_2}_{\DepaD} $  (we use mainly the case $t_2< 0 < t_1$).

The spaces of or Sobolev multipliers
$
M[H^{t_1} (\RR^d) \sto H^{t_2} (\RR^d)]
$
 acting between fractional Sobolev spaces $H^{t_1} (\RR^n)$ and $H^{t_2} (\RR^n)$ have been studied intensively, see  \cite{S67,MV95,MS04,MV04,NS06,MS09} and references therein.
If $t_2<0$, Sobolev multipliers can be distributions.
In Section \ref{s:MultGen},  we adapt  
the spaces of distributions $\MM [H^{t_1} (\RR^n) \sto H^{t_2} (\RR^n)]$ to the case where $\RR^d$ is replaced by a $C^{m,1}$-boundary $\paD$ with a suitable smoothness parameter $m = m(d)$.

In this section, we work with a boundary $\paD$ of a general Lipschitz domain and are primarily interested in the multiplication operators $(p \! \uph_\paD) \to \imp \ ( p \! \uph_\paD)$  that are generated by  integrable impedance coefficients $\imp \in L^1 (\paD)$. 


\subsection{Pointwise multipliers on Lipschitz boundaries}
\label{s:PM}

Let $H^\infty_\DepaD := \bigcap_{s \in \RR}  H^s_\DepaD $.
Let $\Xf_1$, and $\Xf_2$ be Hilbert spaces and let $\Sf_\infty (\Xf_1,\Xf_2)$ be the space of compact operators from $\Xf_1$ to $\Xf_2$. 

The notation `$\lesssim$'  below in \eqref{e:intADe} (and throughout the paper) means that the corresponding inequality `$\le $' is valid after multiplication of the right  side  on a certain constant $c>0$ (in \eqref{e:intADe}
this constant is  independent of $g_1$ and $g_2$, but obviously depends on $f$).

\begin{defn} \label{d:MMs1s2De}
Let $s_1,s_2 \in \RR$. 
 Assume that  a measurable function $f: \paD \to \CC$ is such that the product $f g_1 \ove g_2$ belongs to $ L^1 (\pa \D)$ for all  $g_1, g_2  \in H^\infty_\DepaD $
 and satisfies 
\begin{gather} \label{e:intADe}
\left|\int_{\paD}  f g_1 \ove{g}_2  \right| \lesssim \| g_1 \|_{H^{s_1}_\DepaD}  \| g_2 \|_{H^{s_2}_\DepaD},  \qquad g_1, g_2   \in H^\infty_\DepaD .
\end{gather}
\item[(i)] Then we say that $f$ belongs to the space  $\MM (H^{s_1}_\DepaD , H^{-s_2}_\DepaD  )$  of  pointwise multipliers from $H^{s_1}_\DepaD$ to $H^{-s_2}_\DepaD$.
\item[(ii)] Let us denote by 
$
\mulp_f^{H^{s_1} \sto H^{-s_2}} 
$
the unique $\Lc (H^{s_1}_\DepaD ,H^{-s_2}_\DepaD)$-operator  satisfying 
\begin{equation*}  
\< \mulp_f^{H^{s_1} \sto H^{-s_2}} g_1 , g_2\>_\paD = \int_{\paD}  f g_1 \ove{g}_2 , \qquad  g_1, g_2   \in H^\infty_\DepaD .
\end{equation*}
If
$
\mulp_f^{H^{s_1} \sto H^{-s_2}} \in  \Sf_\infty (H^{s_1}_\DepaD ,H^{-s_2}_\DepaD) 
$
we  say that 
$f$ belongs to 
the space \linebreak $ \MM^{\Sf_\infty} (H^{s_1}_\DepaD , H^{-s_2}_\DepaD )$ of compact pointwise multipliers
(from $H^{s_1}_\DepaD$ to $H^{-s_2}_\DepaD$).
\end{defn}

\begin{rem} \label{r:DensHinf}
The operator $\mulp_f^{H^{s_1} \sto H^{-s_2}} \in \Lc (H^{s_1}_\DepaD ,H^{-s_2}_\DepaD)$ in Definition \ref{d:MMs1s2De} is well-defined.
Indeed, the linear space $H^\infty_\DepaD$ contains all eigenfunctions of $\DepaD$, and so, is dense in every $H^s_\DepaD$ with $s \in \RR$
(this follows from $\DepaD = \De_\paD^*$ in $L^2 (\paD)$, see \cite{GMMM11}).
\end{rem}

Note that the function $\one $ that takes the value $1$ everywhere on $\paD$ is an eigenfunction of $\DepaD$. So, $\one \in H^\infty_\DepaD$. 
With $g_1 = \one$ or $g_2 = \one$ in Definition \ref{d:MMs1s2}, we get 
\begin{equation} \label{e:MimbL1}
\MM (H^{s_1}_\DepaD , H^{-s_2}_\DepaD ) \subseteq L^1 (\paD) \cap H^{-s_1}_\DepaD \cap  H^{-s_2}_\DepaD \quad \text{for all $s_1,s_2 \in \RR$.}
\end{equation}

In order to define $\MM (H^{s_1}_\DepaD , H^{-s_2}_\DepaD)$ as a normed space, we use  \eqref{e:MimbL1}  and consider $\MM (H^{s_1}_\DepaD , H^{-s_2}_\DepaD)$ as a
 linear subspace of $H^{-\min\{s_1,s_2\}}_\DepaD$
(so, we interpret functions $f$ in Definition \ref{d:MMs1s2De} as the corresponding equivalence classes). 

The norm in the resulting  linear space $ \MM (H^{s_1}_\DepaD, H^{-s_2}_\DepaD)$ is defined  as the operator norm
$
\|f \|_{\MM (H^{s_1}_\DepaD, H^{-s_2}_\DepaD)} := \| \mulp_f^{H^{s_1}  \shortto H^{-s_2} } \|_{\Lc (H^{s_1}_\DepaD, H^{-s_2}_\DepaD)} .
$
In these settings, the positive definiteness of $\|\cdot \|_{\MM (H^{s_1}_\DepaD, H^{-s_2}_\DepaD)} $ follows from the fact that $H^\infty_\DepaD$ is
dense in the space $H^t_\DepaD$ with $t=\min\{s_1,s_2\}$ (see Remark \ref{r:DensHinf}).

In what follows, we consider $\MM (H^{s_1}_\DepaD , H^{-s_2}_\DepaD)$ as normed spaces.
Note that $\one \in \MM (H^{s_1}_\DepaD , H^{-s_2}_\DepaD) \neq \{0\}$ if
$s_1 \ge -s_2$. The set $ \MM^{\Sf_\infty} (H^{s_1}_\DepaD, H^{-s_2}_\DepaD )$ of compact multipliers is a closed subspace of the normed space $\MM (H^{s_1}_\DepaD , H^{-s_2}_\DepaD)$.

The embeddings \eqref{e:HsDeImbImb} yield that if $ s_1 \le t_1$, $s_2 \le t_2 $, and $s_1 +s_2 < t_1 + t_2$, then 
\begin{gather} \label{e:MinM12}
	\MM (H^{s_1}_\DepaD , H^{-s_2}_\DepaD) \imb \MM^{\Sf_\infty} (H^{t_1}_\DepaD , H^{-t_2}_\DepaD)  \imb \MM (H^{t_1}_\DepaD , H^{-t_2}_\DepaD) .
\end{gather}

Definition \ref{d:MMs1s2De} and Remark \ref{r:DensHinf} imply that
$\MM (H^{s_0}_\DepaD , H^{-s_1}_\DepaD) = \MM (H^{s_1}_\DepaD , H^{-s_0}_\DepaD )$
for all $s_0,s_1 \in \RR$,
that $f$ and $\overline f$  belong to $\MM (H^{s_0}_\DepaD , H^{-s_1}_\DepaD) $ simultaneously with the same norm, and that 
$(\mulp_f^{H^{s_0}  \shortto H^{-s_1} } )^\cross = \mulp_{\overline{f}}^{H^{s_1}  \sto H^{-s_0}  }$ 
(for $\cross$-adjoint operators, see Sections \ref{s:m-disGIBC} and \ref{s:GenAcc}).
If $f \in \MM (H^{s_0}_\DepaD , H^{-s_1}_\DepaD) $,
one can interpolate between the bounded operators  $ \mulp_f^{H^{s_0} \sto H^{-s_1}}$ and  $ \mulp_f^{H^{s_1} \sto H^{-s_0}}$ using the  \eqref{e:IntHs-sDe} and the reiteration formula for the interpolation (see \cite{T78}). This gives  for all $\vth \in [0,1]$ the formula
\begin{gather}\label{e:MpInt}
\MM (H^{s_0}_\DepaD , H^{-s_1}_\DepaD) = \MM (H^{s_1}_\DepaD , H^{-s_0}_\DepaD ) \imb 
\MM (H^{s_0(1-\vth) +s_1 \vth}_\DepaD , H^{-s_1 (1-\vth) - s_0 \vth}_\DepaD) .
\end{gather} 

\begin{rem} \label{d:MMs1s2HpaD}
The equality $ H^s_\DepaD = H^s (\paD) $ for $s \in [-1,1]$ (see \eqref{e:HsDe=Hs})
makes it possible to substitute the spaces $H^s (\paD)$ into Definition \ref{d:MMs1s2De}. 
This leads for $s_1,s_2 \in [-1,1]$ to the normed spaces $\MM (H^{s_1} (\pa \D), H^{-s_2} (\pa \D) )$ and $ \MM^{\Sf_\infty} (H^{s_1} (\pa \D), H^{-s_2} (\pa \D) )$.
The advantage of the passage to the spaces $H^s (\paD)$
is the possibility to use for $s \in [-1,1]$ the Sobolev-type embeddings
and, in this way, to connect the spaces of pointwise multipliers with $L^q(\paD)$-spaces,
see Section \ref{s:Lq}.
\end{rem}

\subsection{M-dissipativity and restricted multiplication operators}
\label{s:MulpMd}

Below, we use without special notice the fact that $ H^s_\DepaD = H^s (\DepaD) $ for $s \in [-1,1]$. 
Since $\gan (\vbf)$ in the boundary condition $\imp  \ga_0 (p) =  \gan (\vbf)$ belongs to $H^{-1/2} (\Om)$, we need to adapt  the multiplication operators $\mulp_\imp^{H^{t} \shortto H^{-s} }$ to the $H^{-1/2} (\paD)$-settings.

\begin{defn}[restricted multiplication operators] \label{d:wtM}
Let $s,t \in \RR$ and let \linebreak $f \in \MM (H^{t}_\DepaD , H^{-s}_\DepaD)$.
The restricted multiplication operator  
\[
 \mulp_f^{t, -s} : \dom \mulp_f^{t,-s} \subseteq H^{1/2} (\paD)  \to H^{-1/2} (\paD) 
\] 
with 
$
\dom  \mulp_f^{t,-s} := \{ u \in H^{1/2} (\paD) \cap H^t_\DepaD  : \ 
\mulp_f^{H^t \shortto H^{-s}} u \in  H^{-1/2} (\paD)   \}
$
is defined by 
$
\mulp_f^{t, -s} u := \mulp_f^{H^t \shortto H^{-s}} u  $ \quad  for all $ u \in \dom  \mul_f^{t,-s}$.
\end{defn}


 Recall that, for an operator $\Zc:\dom \Zc \subseteq H^{1/2} (\pa \D) \to H^{-1/2} (\pa \D)$, $\A_\Zc$ is the acoustic operator associated with the boundary condition $\Zc  \ga_0 (p) =  \gan (\vbf)  $ (see  Definition \ref{d:BC}), and note that in this paper we do not work with linear relations (cf. \cite{EK22,K26}).

\begin{thm} \label{t:Inter}
Assume that $s \ge 1/2$ and $\imp \in \MM (H^{1/2}_\DepaD , H^{-s}_\DepaD )$.
Then  the following statements are equivalent:
\begin{itemize}
\item[(i)] $\int_\paD \re (\imp) |g|^2 \ge 0 $ for all $g \in H^\infty_\DepaD$;
\item[(ii)] the operator $\mulp_{\imp}^{s, -1/2}$ is accretive;
\item[(iii)] there exists a restriction $\wt \Zc$ of $\mulp_{\imp}^{1/2, -s}$ such that 
		the acoustic operator $\A_{\wt \Zc} $ is m-dissipative;
\item[(iv)] there exists an extension  $\wh \Zc$ of $\mulp_{\imp}^{s, -1/2}$ such that 
		$\A_{\wh \Zc} $ is m-dissipative.
\end{itemize}
 \end{thm}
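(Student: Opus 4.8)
The plan is to regard $\mulp_\imp^{s,-1/2}$ and $\mulp_\imp^{1/2,-s}$ as the \emph{minimal} and \emph{maximal} multiplication realizations of $\imp$ and to transport every assertion, through the symmetrization built into Theorem~\ref{t:DtN-GIBC}(ii), to standard Hilbert-space facts about m-accretive operators in the pivot space $L^2(\paD)$. First I would clear the domain bookkeeping. By the symmetry $\MM(H^{1/2}_\DepaD,H^{-s}_\DepaD)=\MM(H^s_\DepaD,H^{-1/2}_\DepaD)$ both realizations are defined, and since $s\ge 1/2$ gives $H^s_\DepaD\subseteq H^{1/2}_\DepaD$ while $H^{-1/2}_\DepaD$ is the natural target, Definition~\ref{d:wtM} yields $\dom\mulp_\imp^{s,-1/2}=H^s_\DepaD$, which is dense in $H^{1/2}_\DepaD$ by \eqref{e:HsDeImbImb}; thus $\mulp_\imp^{s,-1/2}$ coincides with $\mulp_\imp^{H^s\sto H^{-1/2}}\in\Lc(H^s_\DepaD,H^{-1/2}_\DepaD)$ and is densely defined as an operator $H^{1/2}_\DepaD\to H^{-1/2}_\DepaD$.

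For (i)$\Leftrightarrow$(ii) I would use that, for $g\in H^\infty_\DepaD$, $\<\mulp_\imp^{s,-1/2}g,g\>_\paD=\int_\paD\imp\,|g|^2\,\dd\Si$, so $\re\<\mulp_\imp^{s,-1/2}g,g\>_\paD=\int_\paD\re(\imp)\,|g|^2\,\dd\Si$. Since the quadratic form $u\mapsto\<\mulp_\imp^{s,-1/2}u,u\>_\paD$ is continuous on $H^s_\DepaD$ and $H^\infty_\DepaD$ is dense there (Remark~\ref{r:DensHinf}), the nonnegativity in (i) propagates from $H^\infty_\DepaD$ to all of $H^s_\DepaD$, which is exactly the accretivity of $\mulp_\imp^{s,-1/2}$ in (ii).

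Next I would introduce the symmetrization $\Phi(\Zc):=(\De_\paD+1)^{-1/4}_{H^{-1/2}\sto H^0}\,\Zc\,(\De_\paD+1)^{-1/4}_{H^0\sto H^{1/2}}$. By Theorem~\ref{t:DtN-GIBC}(i)$\Leftrightarrow$(ii), $\A_\Zc$ is m-dissipative iff $\Phi(\Zc)$ is m-accretive in $L^2(\paD)$; and the $\cross$-adjoint identity for the fractional powers shows that $\Phi$ is a bijection between operators $H^{1/2}_\DepaD\to H^{-1/2}_\DepaD$ and operators in $L^2(\paD)$ which preserves graph inclusions and accretivity and satisfies $\Phi(\Zc^\cross)=\Phi(\Zc)^*$. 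Writing $S:=\mulp_\imp^{s,-1/2}$, $S_*:=\mulp_{\bar\imp}^{s,-1/2}$, $T:=\mulp_\imp^{1/2,-s}$, the formula $(\mulp_f^{H^{s_0}\sto H^{-s_1}})^\cross=\mulp_{\bar f}^{H^{s_1}\sto H^{-s_0}}$ gives $T=S_*^\cross$, hence $\Phi(T)=\Phi(S_*)^*$, while $S$ and $S_*$ have the same accretivity since $\re\bar\imp=\re\imp$. With this dictionary the remaining equivalences reduce to extension theory in $L^2(\paD)$. For (ii)$\Leftrightarrow$(iv): if $S$ is accretive then $\Phi(S)$ is densely defined and accretive, so by the existence of an m-accretive extension of a densely defined accretive operator (Phillips \cite{P59}) it admits an m-accretive extension $\mathcal{N}$; then $\wh\Zc:=\Phi^{-1}(\mathcal{N})$ extends $S$ and, as $\Phi(\wh\Zc)=\mathcal{N}$ is m-accretive, $\A_{\wh\Zc}$ is m-dissipative, while conversely any such $\wh\Zc$ is accretive, so its restriction $S$ is accretive. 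For (ii)$\Leftrightarrow$(iii) I would extend the \emph{conjugate} realization: accretivity of $S_*$ (equivalent to (ii)) gives an m-accretive extension $\mathcal{M}\supseteq\Phi(S_*)$; its adjoint $\mathcal{M}^*$ is again m-accretive and $\mathcal{M}^*\subseteq\Phi(S_*)^*=\Phi(T)$, so $\wt\Zc:=\Phi^{-1}(\mathcal{M}^*)$ is a restriction of $T=\mulp_\imp^{1/2,-s}$ with $\A_{\wt\Zc}$ m-dissipative. The converse reverses this: $\Phi(\wt\Zc)\subseteq\Phi(T)=\Phi(S_*)^*$ is m-accretive, hence $\Phi(\wt\Zc)^*\supseteq\Phi(S_*)$ is m-accretive, hence accretive, forcing $\Phi(S_*)$ — and thus $S$ — to be accretive.

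The one genuinely delicate point, and the reason (iii) and (iv) are not symmetric, is that one \emph{cannot} obtain the m-dissipative restriction $\wt\Zc$ of $\mulp_\imp^{1/2,-s}$ by sandwiching an accretive extension of $\mulp_\imp^{s,-1/2}$ between the minimal and maximal operators: the chain $\mulp_\imp^{s,-1/2}\subseteq\wt\Zc\subseteq(\mulp_\imp^{s,-1/2})^\cross$ would force $\mulp_\imp^{s,-1/2}$ to be symmetric, i.e. $\imp$ purely imaginary. The correct device is to extend the conjugate realization $\mulp_{\bar\imp}^{s,-1/2}$ and pass to an adjoint, exploiting $\mulp_\imp^{1/2,-s}=(\mulp_{\bar\imp}^{s,-1/2})^\cross$ together with the stability of m-accretivity under adjoints. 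Accordingly, the technical heart of the argument is the verification that $\Phi$ genuinely intertwines the $\cross$-adjoint with the $L^2$-adjoint and sends the minimal and maximal realizations to bona fide mutually adjoint (closable, resp. closed) operators in $L^2(\paD)$; this rests on the homeomorphism and $\cross$-adjoint properties of the fractional powers $(\De_\paD+1)^{s/2}_{H^t\sto H^{t-s}}$ recorded before the theorem, and requires care because the minimal realization is not closed as an operator $H^{1/2}_\DepaD\to H^{-1/2}_\DepaD$.
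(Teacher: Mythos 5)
Your argument is correct and follows essentially the same route as the paper: the paper packages your symmetrization $\Phi$ and the Phillips extension step into Proposition \ref{p:tPh59} and then invokes the part (i) $\Leftrightarrow$ (iv) of Theorem \ref{t:DtN-GIBC}, while your derivation of (ii) $\Rightarrow$ (iii) by extending the conjugate minimal realization $\mulp_{\ove \imp}^{s,-1/2}$ and passing to the ($\cross$-)adjoint is exactly what that proposition delivers. One inconsequential slip in your closing remark: symmetry of $\mulp_{\imp}^{s,-1/2}$ with respect to $\<\cdot,\cdot\>_\paD$, i.e.\ $\mulp_{\imp}^{s,-1/2}\subseteq(\mulp_{\imp}^{s,-1/2})^\cross=\mulp_{\ove \imp}^{1/2,-s}$, forces $\imp$ to be real-valued, not purely imaginary.
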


This theorem is proved in Section \ref{ss:PrT:1/2maxdis}. 
The role of the assumption \linebreak $\imp \in \MM (H^{1/2}_\DepaD , H^{-s}_\DepaD )$ 
is that it guaranties that $\mulp_{\imp}^{s,-1/2}$ and $\mulp_{\imp}^{1/2, -s}$ are densely defined. 
This allows us to ensure that $\wt \Zc$ and $\wh \Zc$ are operators (and are not multi-valued linear relations). 
Since $\wt \Zc$ is a restriction of the multiplication operator $\mulp_{\imp}^{1/2, -s}$ associated with the impedance coefficient $\imp (\cdot)$, the m-dissipative boundary condition $\wt \Zc  \ga_0 (p) =  \gan (\vbf)  $ is associated with $\imp (\cdot)$ in a certain (quite implicit) sense. This association of $\wt \Zc$ and $\imp$ is vague because 
$\dom \wt \Zc$ is not described  and because we have no control yet on the uniqueness of  m-dissipative boundary conditions of this type. 
 
The next theorem characterizes in terms of the closure $\ove{\mulp_\imp^{s,-1/2}}$ one of the cases where an m-dissipative interpretation of  $\imp  \ga_0 (p) =  \gan (\vbf)  $ is unique.

\begin{thm} \label{t:Macc}
Assume that $s \ge 1/2$, $\imp \in \MM (H^{1/2}_\DepaD , H^{-s}_\DepaD )$, 
and $\int_\paD \re (\imp) |g|^2 \ge 0 $ for all $g \in H^\infty_\DepaD$.
	Then the following statements are equivalent: 
		\begin{itemize}
		\item[(i)] the operator $\mulp_\imp^{1/2,-s}$ is accretive;
		\item[(ii)] $\mulp_\imp^{s,-1/2}$ is closable and $\ove{\mulp_\imp^{s,-1/2}} = \mulp_\imp^{1/2,-s}$;
		\item[(iii)] the acoustic operator $\A_\Yc$ with $\Yc =  \mulp_\imp^{s,-1/2}$ is essentially m-dissipative;
		\item[(iv)] the acoustic operator $\A_\Zc$ with $\Zc =  \mulp_\imp^{1/2,-s}$ is m-dissipative.
		\end{itemize}
\end{thm}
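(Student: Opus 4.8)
Throughout I write $M_{\Min} := \mulp_\imp^{s,-1/2}$ and $M_{\Max} := \mulp_\imp^{1/2,-s}$, and denote by $\wt M_{\Min} := \mulp_{\ove\imp}^{s,-1/2}$, $\wt M_{\Max} := \mulp_{\ove\imp}^{1/2,-s}$ the operators built from the conjugate coefficient $\ove\imp$ (recall $\imp \in \MM (H^{1/2}_\DepaD , H^{-s}_\DepaD ) = \MM (H^{s}_\DepaD , H^{-1/2}_\DepaD )$, and likewise for $\ove\imp$). Since $s \ge 1/2$, one has $H^s_\DepaD \subseteq H^{1/2}_\DepaD$ and $\mulp_\imp^{H^s \shortto H^{-1/2}}$ already maps into $H^{-1/2} (\paD)$; hence $\dom M_{\Min} = H^s_\DepaD$, which is dense in $H^{1/2}_\DepaD$ (it contains $H^\infty_\DepaD$). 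Comparing the two multiplication rules through their common action $\int_\paD \imp u \ove g \,\dd \Si$ on $H^\infty_\DepaD$ gives $M_{\Min} \subseteq M_{\Max}$ (and $\wt M_{\Min}\subseteq\wt M_{\Max}$). Finally, since $u\mapsto\re\<M_{\Min}u,u\>_\paD$ is continuous on $H^s_\DepaD$ and $H^\infty_\DepaD$ is dense there, the standing hypothesis $\int_\paD\re(\imp)|g|^2\,\dd\Si\ge0$ is equivalent to accretivity of $M_{\Min}$ (this is Theorem \ref{t:Inter}(i)$\Leftrightarrow$(ii)); as $\re\ove\imp=\re\imp$, the operator $\wt M_{\Min}$ is accretive as well.

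The technical core is the identification of the $\cross$-adjoints
\begin{gather*}
 (\mulp_\imp^{s,-1/2})^\cross = \mulp_{\ove\imp}^{1/2,-s} , \qquad (\mulp_{\ove\imp}^{s,-1/2})^\cross = \mulp_{\imp}^{1/2,-s} ,
\end{gather*}
i.e. $M_{\Min}^\cross = \wt M_{\Max}$ and $\wt M_{\Min}^\cross = M_{\Max}$. The inclusions $\supseteq$ are immediate by testing on $H^\infty_\DepaD$: for $u,v\in H^\infty_\DepaD$ one has $\<M_{\Min}u,v\>_\paD=\int_\paD\imp u\ove v\,\dd\Si=\<u,\wt M_{\Max}v\>_\paD$. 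The reverse inclusion is where I would pin down $\dom M_{\Min}^\cross$ directly from the definition of the $\cross$-adjoint relative to the central pairing on $H^{1/2}_\DepaD\times H^{-1/2}_\DepaD$: a vector $v\in H^{1/2}_\DepaD$ lies in $\dom M_{\Min}^\cross$ iff the functional $u\mapsto\<M_{\Min}u,v\>_\paD=\ove{\<\mulp_{\ove\imp}^{H^{1/2}\shortto H^{-s}}v,u\>_\paD}$, a priori only $H^s_\DepaD$-continuous on the dense subspace $H^s_\DepaD$, is continuous in the $H^{1/2}_\DepaD$-norm. By $H^{1/2}_\DepaD$–$H^{-1/2}_\DepaD$ duality and density of $H^s_\DepaD$ in $H^{1/2}_\DepaD$, this happens exactly when the distribution $\mulp_{\ove\imp}^{H^{1/2}\shortto H^{-s}}v\in H^{-s}_\DepaD$ in fact lies in $H^{-1/2}_\DepaD$, that is, when $v\in\dom\wt M_{\Max}$. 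This domain computation, closing the gap between the $H^s_\DepaD$- and $H^{1/2}_\DepaD$-topologies, is the main obstacle. Two corollaries follow: $M_{\Min}$ is closable (its adjoint $\wt M_{\Max}$ is densely defined, containing $H^\infty_\DepaD$), and $\ove{M_{\Min}}=M_{\Min}^{\cross\cross}=\wt M_{\Max}^\cross\subseteq M_{\Max}$.

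I would then transfer the problem into $L^2(\paD)$ through the similarity $J(\Zc):=(\De_\paD+1)^{-1/4}_{H^{-1/2}\shortto H^0}\,\Zc\,(\De_\paD+1)^{-1/4}_{H^0\shortto H^{1/2}}$ of Theorem \ref{t:DtN-GIBC}(ii). Using the $\cross$-adjoint formula for the fractional powers one verifies $J(\Zc)^*=J(\Zc^\cross)$ (the Hilbert adjoint in $L^2$), and that $J$ intertwines accretivity, closures and dense definedness. Writing $B:=J(M_{\Min})$ (densely defined, with domain $H^{s-1/2}_\DepaD$, and accretive), the crucial lemma yields $B^*=J(M_{\Min}^\cross)=J(\wt M_{\Max})$, while $J(M_{\Max})=J(\wt M_{\Min}^\cross)=J(\wt M_{\Min})^*$ and $B\subseteq J(M_{\Max})$. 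Theorem \ref{t:DtN-GIBC} and the construction behind it provide the dictionary: (iv)$\Leftrightarrow$$J(M_{\Max})$ is m-accretive; (i)$\Leftrightarrow$$J(M_{\Max})$ is accretive; (ii)$\Leftrightarrow$$\ove B=J(M_{\Max})$; and (iii), the essential m-dissipativity of $\A_{M_{\Min}}$, corresponds to essential m-accretivity of $B$, equivalently to $\ove{\A_{M_{\Min}}}=\A_{\ove{M_{\Min}}}$.

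It then remains to invoke the classical theory of accretive operators in a Hilbert space (Phillips \cite{P59}, cf. \cite{Kato}). First, (i)$\Leftrightarrow$(iv) is immediate from Theorem \ref{t:DtN-GIBC}(i)$\Leftrightarrow$(iii) with $Q=\wt M_{\Min}$ (densely defined and accretive) and $M_{\Max}=\wt M_{\Min}^\cross$. Next, $\ove B$ is m-accretive iff $B^*=J(\wt M_{\Max})$ is accretive; since $\re\ove\imp=\re\imp$, accretivity of $J(\wt M_{\Max})$ is equivalent to accretivity of $J(M_{\Max})$, giving (iii)$\Leftrightarrow$(i). Finally, when (i) holds, $\ove B$ is m-accretive, hence maximal accretive; as $\ove B\subseteq J(M_{\Max})$ with $J(M_{\Max})$ accretive, maximality forces $\ove B=J(M_{\Max})$, which is (ii) and also (iv), while the converse implications (ii)$\Rightarrow$(i) and (iv)$\Rightarrow$(i) are trivial. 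This closes all four equivalences. Besides the adjoint-domain computation of the second paragraph, the only delicate bookkeeping point is the identity $\ove{\A_{M_{\Min}}}=\A_{\ove{M_{\Min}}}$ used for (iii), which I expect to deduce from the extension-theoretic correspondence underlying Theorem \ref{t:DtN-GIBC} (cf. \cite{EK22,K25,K26}).
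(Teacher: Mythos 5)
Your proposal is correct and follows essentially the same route as the paper: you re-derive the $\cross$-adjoint identities $(\mulp_{\ove\imp}^{s,-1/2})^\cross=\mulp_\imp^{1/2,-s}$ and $\ove{\mulp_\imp^{s,-1/2}}=(\mulp_{\ove\imp}^{1/2,-s})^\cross$ (the paper's Lemma \ref{l:Mf12-s}), transfer accretivity between $\imp$ and $\ove\imp$, invoke the Phillips-type maximality criterion of Theorem \ref{t:DtN-GIBC}, and use $\ove{\A_\Yc}=\A_{\ove\Yc}$ (Proposition \ref{p:EssMD}) for statement (iii). The only cosmetic difference is that you carry out the bookkeeping explicitly in the $L^2(\paD)$ picture via conjugation by $(\De_\paD+1)^{-1/4}$, which is the same reduction the paper performs in its appendix.
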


This theorem is proved in Section \ref{ss:PrT:1/2maxdis}.

\begin{rem} \label{r:Mp12-12}
Consider the case where $s=1/2$, $\imp \in \MM (H^{1/2}_\DepaD , H^{-1/2}_\DepaD )$, and $\re \imp \ge 0$ a.e. on $\paD$. Then $\Zc = \mulp_\imp^{1/2,-s}$ coincides with $\mulp_\imp^{s,-1/2}= \mulp_\imp^{H^{1/2} \sto H^{-1/2}} $ and is a bounded accretive operator. By Theorem \ref{t:Macc}, $\A_\Zc$  is m-dissipative. If additionally $\re \imp = 0$ a.e. on $\paD$,
then $\Zc^\cross = - \Zc$, and so, Theorem \ref{t:DtN-GIBC} implies $\A_\Zc = \A_\Zc^*$.
\end{rem}

\subsection{A bit of  operator theory for restricted multipliers}
\label{s:ResM}

Let $f  \in \MM (H^{1/2}_\DepaD , H^{-s}_\DepaD )$ with $s \in [1/2,+\infty)$.
Then also $\ove  f \in \MM (H^{1/2}_\DepaD , H^{-s}_\DepaD )$.

\begin{lem} \label{l:Mf12-s}
(i) The functions $f$ and $\ove f$ are in $ \MM (H^s_\DepaD , H^{-1/2}_\DepaD ) $.
\item[(ii)] The operator $ \mulp_f^{s, -1/2}$ is a densely defined restriction of 
$\mulp_f^{1/2, -s}$. 
\item[(iii)] $( \mulp_{\ove f}^{s, -1/2})^\cross =  \mulp_f^{1/2, -s} $, and so, $\mulp_f^{1/2, -s}$ is a closed operator.
\item[(iv)] The operator $\mulp_f^{s, -1/2}$ has the closure $\ove{\mulp_f^{s, -1/2}} = (\mulp_{\ove f}^{1/2, -s})^\cross$.
\end{lem}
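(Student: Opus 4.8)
The plan is to obtain (i) from the two elementary facts recorded just before the lemma and then to bootstrap (ii)--(iv) from them together with the abstract theory of $\cross$-adjoints from Appendix~\ref{s:GenAcc}. The two facts are the scale-symmetry $\MM (H^{s_0}_\DepaD , H^{-s_1}_\DepaD) = \MM (H^{s_1}_\DepaD , H^{-s_0}_\DepaD)$ (with $f$ and $\overline f$ belonging simultaneously) and the bounded $\cross$-adjoint identity $(\mulp_g^{H^{a} \sto H^{-b}})^\cross = \mulp_{\overline g}^{H^{b} \sto H^{-a}}$. Item (i) is then immediate: taking $s_0 = 1/2$, $s_1 = s$ in the symmetry shows $\MM (H^{1/2}_\DepaD , H^{-s}_\DepaD) = \MM (H^{s}_\DepaD , H^{-1/2}_\DepaD)$, so both $f$ and $\overline f$ lie in the right-hand space.

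For (ii) I would first pin down the domain. Since $s \ge 1/2$, the embedding \eqref{e:HsDeImbImb} gives $H^s_\DepaD \subseteq H^{1/2}_\DepaD = H^{1/2}(\paD)$, while the codomain of $\mulp_f^{H^s \sto H^{-1/2}}$ already equals $H^{-1/2}_\DepaD = H^{-1/2}(\paD)$; hence Definition~\ref{d:wtM} forces $\dom \mulp_f^{s,-1/2} = H^s_\DepaD$, which is dense in $H^{1/2}_\DepaD$ again by \eqref{e:HsDeImbImb}. The restriction claim then reduces to the compatibility of the two bounded multipliers on $H^s_\DepaD$: for every $g_1 \in H^\infty_\DepaD$ both $\mulp_f^{H^s \sto H^{-1/2}} g_1$ and $\mulp_f^{H^{1/2} \sto H^{-s}} g_1$ are represented by the single functional $g_2 \mapsto \int_\paD f g_1 \overline{g}_2 \, \dd \Si$, so the inclusion $H^{-1/2}_\DepaD \imb H^{-s}_\DepaD$ (valid as $-1/2 \ge -s$) intertwines the two operators on the dense set $H^\infty_\DepaD$ and, by boundedness, on all of $H^s_\DepaD$. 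Thus each $u \in H^s_\DepaD$ satisfies $\mulp_f^{H^{1/2} \sto H^{-s}} u = \mulp_f^{H^s \sto H^{-1/2}} u \in H^{-1/2}(\paD)$, i.e.\ $u \in \dom \mulp_f^{1/2,-s}$ with the same value.

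For (iii) I would compute $T^\cross$ for $T := \mulp_{\overline f}^{s,-1/2}$ directly from the definition of the $\cross$-adjoint. For $u \in \dom T = H^s_\DepaD$ and $v \in H^{1/2}_\DepaD$ the action of $T$ coincides with that of the bounded operator, and the bounded $\cross$-adjoint identity yields $\langle T u , v \rangle_\paD = \langle \mulp_{\overline f}^{H^s \sto H^{-1/2}} u , v \rangle_\paD = \langle u , \mulp_f^{H^{1/2} \sto H^{-s}} v \rangle_\paD$. Hence a vector $v \in H^{1/2}_\DepaD$ lies in $\dom T^\cross$ with $T^\cross v = w \in H^{-1/2}_\DepaD$ exactly when $\langle u , \mulp_f^{H^{1/2} \sto H^{-s}} v - w \rangle_\paD = 0$ for all $u \in H^s_\DepaD$; non-degeneracy of the $H^s_\DepaD \times H^{-s}_\DepaD$ pairing forces $\mulp_f^{H^{1/2} \sto H^{-s}} v = w$ in $H^{-s}_\DepaD$, and the requirement $w \in H^{-1/2}_\DepaD$ is then precisely the membership condition $v \in \dom \mulp_f^{1/2,-s}$ together with $w = \mulp_f^{1/2,-s} v$. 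This gives $T^\cross = \mulp_f^{1/2,-s}$; since $T$ is densely defined (by (ii) applied to $\overline f$), its $\cross$-adjoint is automatically closed (Appendix~\ref{s:GenAcc}), proving the closedness assertion.

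Finally, (iv) follows from the closure relation $\overline S = S^{\cross\cross}$ for densely defined closable operators (Appendix~\ref{s:GenAcc}), applied to $S := \mulp_f^{s,-1/2}$. Applying (iii) with $f$ and $\overline f$ interchanged gives $S^\cross = \mulp_{\overline f}^{1/2,-s}$, whose domain contains $H^s_\DepaD$ by the computation in (ii) and is therefore dense; hence $S$ is closable and $\overline{\mulp_f^{s,-1/2}} = S^{\cross\cross} = (\mulp_{\overline f}^{1/2,-s})^\cross$. The step I expect to be the real obstacle is the bookkeeping in (iii): one must track which space of the scale each of $u$, $v$, $w$, and $\mulp_f^{H^{1/2}\sto H^{-s}} v$ occupies and invoke the compatibility of the pairings $\langle\cdot,\cdot\rangle_\paD$ across different values of $s$, so that the abstract defining relation of $T^\cross$ --- with test vectors $u$ confined to $H^s_\DepaD$ and value $w$ confined to the smaller space $H^{-1/2}_\DepaD$ --- is converted correctly into the concrete condition $\mulp_f^{H^{1/2}\sto H^{-s}} v \in H^{-1/2}(\paD)$ defining $\dom \mulp_f^{1/2,-s}$. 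All remaining steps are routine applications of the cited bounded-operator and abstract-adjoint facts.
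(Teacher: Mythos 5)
Your proposal is correct and follows essentially the same route as the paper's proof: (i) from the scale symmetry of the multiplier spaces, (ii) by identifying $\dom \mulp_f^{s,-1/2}$ with $H^s_\DepaD$ and comparing domains in Definition \ref{d:wtM}, (iii) by the same direct computation of the $\cross$-adjoint via the bounded identity $(\mulp_{\ove f}^{H^{s}\sto H^{-1/2}})^\cross = \mulp_f^{H^{1/2}\sto H^{-s}}$, and (iv) via $\ove S = S^{\cross\cross}$. The bookkeeping you flag in (iii) is handled exactly as in the paper.
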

\begin{proof}

(i) Statement (i) follows from  Definition \ref{d:wtM}.

(ii) Since $1/2\le s$, 
the operator 
$
 \mulp_f^{H^{s} \shortto H^{-1/2}} \in \Lc (H^{s}_\DepaD, H^{-1/2}_\DepaD )
$
  defined in Definition \ref{d:MMs1s2De} coincides as a mapping  with the operator $  \mulp_f^{s, -1/2}$ and has the same domain 
$\dom  \mulp_f^{s, -1/2} = \dom \mulp_f^{H^{s} \shortto H^{-1/2}}  = H^{s}_\DepaD $,
which is dense in $ H^{1/2}_\DepaD$. 
The comparison of the domains in Definition \ref{d:wtM} gives $\Gr \mulp_f^{s, -1/2} \subseteq \Gr \mulp_f^{1/2, -s}$.
This proves (ii).

(iii) Since $\ove  f \in \MM (H^{1/2}_\DepaD , H^{-s}_\DepaD )$,  statement (ii) yields that  
the operator $\mulp_{\ove f}^{s, -1/2}$ is densely defined and there exists the $\cross$-adjoint  operator 
$(\mulp_{\ove f}^{s, -1/2})^\cross$. Let us find it.
The $\cross$-adjoint of $ \mulp_{\ove f}^{H^{s} \shortto H^{-1/2}} \in \Lc (H^{s}_\DepaD , H^{-1/2}_\DepaD )$
is
$
 \mulp_f^{H^{1/2} \shortto H^{-s}} \in \Lc (H^{1/2}_\DepaD, H^{-s}_\DepaD ).
$
Hence, for all $g \in H^s_\DepaD = \dom \mulp_{\ove f}^{s, -1/2} $ and $u \in H^{1/2}_\DepaD $,
\begin{gather*} \label{e:ofgu=}
\<\mulp_{\ove f}^{s, -1/2} g , u\>_\paD= \<\mulp_{\ove f}^{H^{s} \shortto H^{-1/2}} g , u\>_\paD = \< g , \mulp_f^{H^{1/2} \shortto H^{-s}}  u\>_\paD .
\end{gather*}
The definition of $\cross$-adjoint operator implies that $u \in H^{1/2}_\DepaD$
belongs to $\dom (\mulp_{\ove f}^{s, -1/2})^\cross $ if and only if 
$\mulp_f^{H^{1/2} \shortto H^{-s}}  u \in H^{-1/2}_\DepaD$, i.e., 
if and only if $u \in \dom \mulp_f^{1/2, -s} $. 
Thus, $\mulp_f^{1/2, -s} = ( \mulp_{\ove f}^{s, -1/2})^\cross$.

(iv) By (i), (ii), and (iii), the operators $\mulp_f^{s, -1/2}$
and $( \mulp_f^{s, -1/2})^\cross =  \mulp_{\ove f}^{1/2, -s} $ are densely defined. 
Thus,
$
\ove{\mulp_f^{s, -1/2}} = (\mulp_f^{s, -1/2})^{\cross\cross} = (\mulp_{\ove f}^{1/2, -s})^\cross .
$
\end{proof}

\begin{lem} \label{l:Mf12-sAcc}
Assume additionally that the operator $\mulp_f^{1/2,-s}$ is accretive. Then: 
\item[(i)] The operators $\mulp_f^{s,-1/2}$,  $\mulp_{\ove f}^{s,-1/2}$, and $\mulp_{\ove f}^{1/2,-s}$ are accretive.
\item[(ii)] $ \mulp_f^{1/2, -s} $ is a maximal accretive, densely defined, and closed operator;
\item[(iii)] $ \ove{ \mulp_f^{s,-1/2}} = \mulp_f^{1/2, -s} $.
\end{lem}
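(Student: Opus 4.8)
The plan is to bootstrap everything from the single given fact that $\mulp_f^{1/2,-s}$ is accretive, using the duality identities of Lemma~\ref{l:Mf12-s} together with Theorems~\ref{t:Inter}, \ref{t:Macc}, and \ref{t:DtN-GIBC}. First I would translate the hypothesis into the pointwise condition
\begin{equation*}
\int_\paD \re (f)\,|g|^2\ \dd\Si \ge 0 \quad \text{for all } g\in H^\infty_\DepaD . \tag{$*$}
\end{equation*}
Indeed, by Lemma~\ref{l:Mf12-s}(ii) we have $H^\infty_\DepaD \subseteq H^s_\DepaD = \dom \mulp_f^{s,-1/2} \subseteq \dom \mulp_f^{1/2,-s}$, and for $g\in H^\infty_\DepaD$ the pairing $\<\mulp_f^{1/2,-s}g,g\>_\paD$ equals $\int_\paD f|g|^2\,\dd\Si$; taking real parts and using the given accretivity yields $(*)$. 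Since $\re(\ove f)=\re(f)$, condition $(*)$ is insensitive to the replacement $f\mapsto\ove f$.

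With $(*)$ in hand, two of the three accretivity claims in (i) are immediate. The operator $\mulp_f^{s,-1/2}$ is accretive because, by Lemma~\ref{l:Mf12-s}(ii), it is a \emph{restriction} of the accretive operator $\mulp_f^{1/2,-s}$, and accretivity is inherited by restrictions. For $\mulp_{\ove f}^{s,-1/2}$ I would invoke Theorem~\ref{t:Inter} (equivalence (i)$\Leftrightarrow$(ii)) for $\ove f$: this operator is bounded from $H^s_\DepaD$ into $H^{-1/2}_\DepaD$ with full domain $H^s_\DepaD$, its continuous quadratic form agrees on the dense set $H^\infty_\DepaD$ with $\int_\paD\re(f)|g|^2\,\dd\Si\ge 0$, so it is accretive. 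The accretivity of the third operator $\mulp_{\ove f}^{1/2,-s}$ is more delicate and I defer it.

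Parts (ii) and (iii) I would obtain together from Theorem~\ref{t:Macc}, whose standing assumptions ($s\ge 1/2$, $f\in\MM(H^{1/2}_\DepaD,H^{-s}_\DepaD)$, and $(*)$) are all met. Its condition~(i) is exactly the given accretivity of $\mulp_f^{1/2,-s}$, so its equivalent statements (ii) and (iv) hold. Statement~(ii) of Theorem~\ref{t:Macc} is precisely our claim (iii), namely $\ove{\mulp_f^{s,-1/2}}=\mulp_f^{1/2,-s}$ (in particular $\mulp_f^{s,-1/2}$ is closable). Statement~(iv) asserts that $\A_{\mulp_f^{1/2,-s}}$ is m-dissipative; feeding $\Zc=\mulp_f^{1/2,-s}$ into Theorem~\ref{t:DtN-GIBC} (implication (i)$\Rightarrow$(iv)) then shows $\mulp_f^{1/2,-s}$ is maximal accretive and densely defined, its closedness being already recorded in Lemma~\ref{l:Mf12-s}(iii). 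This establishes (ii).

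Finally, to finish (i) I would identify $\mulp_{\ove f}^{1/2,-s}$ as the closure of an accretive operator. Combining Lemma~\ref{l:Mf12-s}(iii),(iv) applied to $\ove f$ (giving $\mulp_{\ove f}^{1/2,-s}=(\mulp_f^{s,-1/2})^\cross$ and $\ove{\mulp_{\ove f}^{s,-1/2}}=(\mulp_f^{1/2,-s})^\cross$) with the closure equality (iii) and the standard fact $(\ove T)^\cross=T^\cross$, one computes $(\mulp_f^{1/2,-s})^\cross=(\ove{\mulp_f^{s,-1/2}})^\cross=(\mulp_f^{s,-1/2})^\cross=\mulp_{\ove f}^{1/2,-s}$, whence $\ove{\mulp_{\ove f}^{s,-1/2}}=\mulp_{\ove f}^{1/2,-s}$. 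Since the closure of an accretive operator is accretive (the $H^{-1/2}_\DepaD$–$H^{1/2}_\DepaD$ pairing is jointly continuous along graph-convergent sequences), the accretivity of $\mulp_{\ove f}^{s,-1/2}$ from the second paragraph transfers to $\mulp_{\ove f}^{1/2,-s}$, completing (i). The main obstacle I anticipate is exactly this last step: unlike $\mulp_f^{s,-1/2}$, the operator $\mulp_{\ove f}^{1/2,-s}$ is an \emph{extension} (the closure) of an accretive operator, and accretivity is not inherited by extensions; one therefore cannot argue by monotonicity of domains and must instead pin down $\mulp_{\ove f}^{1/2,-s}=\ove{\mulp_{\ove f}^{s,-1/2}}$ via the duality calculus of Lemma~\ref{l:Mf12-s} and part (iii), after which the rest is bookkeeping with the cited theorems.
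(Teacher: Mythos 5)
Your reduction to the condition $\int_\paD \re(f)\,|g|^2\,\dd\Si \ge 0$ and your first two accretivity claims in (i) (for $\mulp_f^{s,-1/2}$ as a restriction, and for $\mulp_{\ove f}^{s,-1/2}$ via the (i) $\Lra$ (ii) part of Theorem \ref{t:Inter}) are correct and non-circular. The genuine gap is that you derive parts (ii) and (iii) from Theorem \ref{t:Macc}, whereas in the paper the proof of Theorem \ref{t:Macc} rests on Lemma \ref{l:Mf12-sAcc}: its implication (i) $\Rightarrow$ (ii) is proved by citing exactly this lemma, and its statement (ii) is verbatim your claim (iii); likewise its implication (i) $\Rightarrow$ (iv) invokes the lemma to get maximal accretivity. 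So "claim (iii) follows from Theorem \ref{t:Macc}" is circular, and since your remaining accretivity claim in (i) (for $\mulp_{\ove f}^{1/2,-s}$) is routed through (iii), it collapses together with it.

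The non-circular route stays inside Lemma \ref{l:Mf12-s} and Theorem \ref{t:DtN-GIBC}. First, the accretivity of $\mulp_{\ove f}^{1/2,-s}$ must be obtained \emph{directly} from the construction rather than as a closure: the antilinear symmetry $\mulp_{\ove f}^{H^{1/2}\sto H^{-s}}u = \ove{\mulp_f^{H^{1/2}\sto H^{-s}}\ove u}$ identifies $\dom \mulp_{\ove f}^{1/2,-s}$ with $\dom \mulp_f^{1/2,-s}$ via $u \mapsto \ove u$ and preserves the real part of the quadratic form, so the hypothesis transfers. (You correctly note that accretivity does not pass to extensions; the fix is this symmetry, not Theorem \ref{t:Macc}.) For (ii), write $\mulp_f^{1/2,-s} = (\mulp_{\ove f}^{s,-1/2})^\cross$ (Lemma \ref{l:Mf12-s}(iii)) with $\mulp_{\ove f}^{s,-1/2}$ accretive and densely defined, and apply the Phillips-type equivalence (iii) $\Lra$ (iv) of Theorem \ref{t:DtN-GIBC}. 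For (iii), $\ove{\mulp_f^{s,-1/2}} = (\mulp_{\ove f}^{1/2,-s})^\cross$ is then maximal accretive by the same equivalence applied to the accretive, densely defined $\mulp_{\ove f}^{1/2,-s}$; since it is contained in the accretive operator $\mulp_f^{1/2,-s}$, maximality forces equality. Note in particular that the accretivity of $\mulp_{\ove f}^{1/2,-s}$ is an \emph{input} to the proof of (iii), so your plan of extracting it from (iii) afterwards cannot be repaired by reordering.
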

\begin{proof}
(i) The operator $\mulp_f^{s,-1/2}$ is accretive since it is a restriction of the accretive operator 
$\mulp_f^{1/2,-s}$. The accretivity of $ \mulp_{\ove f}^{1/2, -s} $ is obtained by the comparison of the process of the construction for $ \mulp_{\ove f}^{1/2, -s} $ and $\mulp_f^{1/2,-s}$ in  Definitions  \ref{d:MMs1s2De}-\ref{d:wtM} and Remark \ref{r:DensHinf}. Then $ \mulp_{\ove f}^{s,-1/2} $ is also accretive as a restriction of $ \mulp_{\ove f}^{1/2, -s} $.

(ii) It follows from statement (i) and Lemma \ref{l:Mf12-s} that $  \mulp_{\ove f}^{s,-1/2} $ is accretive and densely defined. Taking into account Lemma \ref{l:Mf12-s} (iii), we see that the 
accretive densely defined closed operator $ \mulp_f^{1/2, -s} $ is the $\cross$-adjoint  of 
the accretive densely defined operator $ \mulp_{\ove f}^{s,-1/2} $. The part (iii) $\Leftrightarrow$ (iv) of Theorem \ref{t:DtN-GIBC} shows that $\mulp_f^{1/2,-s}$ is maximal accretive.

(iii) Statements (i)-(ii) and  Lemma \ref{l:Mf12-s} imply that $\mulp_f^{s,-1/2} $ is accretive, densely defined, and has the closure $\ove{\mulp_f^{s, -1/2}} = (\mulp_{\ove f}^{1/2, -s})^\cross$, which is also accretive (as a closure) and densely defined.
Using statement (i) and Theorem \ref{t:DtN-GIBC} (iii) $\Lra$ (iv),
we see that accretive densely defined operator $\ove {\mulp_f^{s,-1/2} }$ is maximal accretive as a $\cross$-adjoint to  $\mulp_{\ove f}^{1/2, -s}$. By statement (ii) and Lemma \ref{l:Mf12-s}, $ \mulp_f^{1/2, -s} $ is a  maximal accretive  extension of  $\ove {\mulp_f^{s,-1/2} }$. Thus, $ \ove{ \mulp_f^{s,-1/2}} = \mulp_f^{1/2, -s} $.
\end{proof}

\subsection{Proofs of Theorems \ref{t:Inter} and \ref{t:Macc} \label{ss:PrT:1/2maxdis} }

\begin{prop} \label{p:tPh59}
Let $\Zc:\dom \Zc \subseteq H^{1/2} (\paD)  \to H^{-1/2} (\paD)$ be 
 a densely defined accretive operator. Then there exist a maximal accretive extension $\wh \Zc$ of 
 $\Zc$ and a densely defined maximal accretive   restriction $\wt \Zc:\dom \wt \Zc \subseteq H^{1/2} (\paD)  \to H^{-1/2} (\paD)$ of $Z^\cross$  such that $(\wh \Zc)^\cross = \wt \Zc$ and $\wh \Zc = (\wt \Zc)^\cross$. 
\end{prop}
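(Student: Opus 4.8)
The plan is to reduce the statement to the classical Phillips extension theory for accretive operators on a single Hilbert space, transporting the problem from the dual pair $(H^{1/2}(\paD), H^{-1/2}(\paD))$ to $L^2(\paD)$ by means of the fractional powers of $\DepaD+1$. Concretely, I would set $K := (\DepaD+1)^{-1/4}_{H^0 \shortto H^{1/2}} \in \Hom(L^2(\paD), H^{1/2}_\DepaD)$ and note, from the $\cross$-adjoint formula for fractional powers recalled above, that $K^\cross = L$, where $L := (\DepaD+1)^{-1/4}_{H^{-1/2}\shortto H^0}\in\Hom(H^{-1/2}_\DepaD, L^2(\paD))$, and that $K^{\cross\cross}=K$, $L^\cross = K$. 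I then define the conjugated operator $A := L\,\Zc\,K$ acting in $L^2(\paD)$, with $\dom A = K^{-1}(\dom\Zc)$, so that $\Zc = L^{-1} A K^{-1}$.

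The heart of the argument is a dictionary between the two settings. Since $K^{-1}$ is a homeomorphism and $\dom\Zc$ is dense in $H^{1/2}_\DepaD$, the domain $\dom A$ is dense in $L^2(\paD)$. Using $L=K^\cross$, $L^\cross=K$ and the defining identity of $\cross$-adjoints one obtains $(Au\,|\,u)_{L^2(\paD)} = \<\Zc K u, K u\>_\paD$ for $u\in\dom A$, whence $A$ is accretive in $L^2(\paD)$ because $\Zc$ is accretive and $Ku\in\dom\Zc$. The same homeomorphism bookkeeping (for bounded invertible conjugating maps the $\cross$-adjoint reverses order and turns $K^{-1}, L^{-1}$ into $L^{-1}, K^{-1}$, while on $H^0$ the $\cross$-adjoint is the $L^2$-adjoint) yields the intertwining $(L^{-1} B K^{-1})^\cross = L^{-1} B^* K^{-1}$ for every densely defined $B$ in $L^2(\paD)$. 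In particular the correspondence $B\mapsto L^{-1}BK^{-1}$ is an inclusion-preserving bijection from densely defined accretive operators in $L^2(\paD)$ onto densely defined accretive operators $H^{1/2}_\DepaD\to H^{-1/2}_\DepaD$ sending $*$ to $\cross$; in particular it maps maximal accretive operators to maximal accretive operators.

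With the dictionary in hand I would invoke Phillips' theorem (\cite{P59}, see also \cite{Kato}): a densely defined accretive operator $A$ in a Hilbert space admits a maximal accretive extension $\wh A$, every densely defined maximal accretive operator is m-accretive (hence closed with $(\wh A)^{**}=\wh A$), and the adjoint of an m-accretive operator is again m-accretive. Applying this to $A=L\Zc K$ produces an m-accretive extension $\wh A\supseteq A$; I set $\wt A := (\wh A)^*$, which is m-accretive (hence maximal accretive), densely defined, and satisfies $(\wt A)^* = \wh A$ and $\wt A\subseteq A^*$ (because $\wh A\supseteq A$). Transporting back via $\wh\Zc := L^{-1}\wh A K^{-1}$ and $\wt\Zc := L^{-1}\wt A K^{-1}$, the dictionary gives exactly the required conclusions: $\wh\Zc$ is a maximal accretive extension of $\Zc$, $\wt\Zc$ is a densely defined maximal accretive restriction of $\Zc^\cross = L^{-1}A^* K^{-1}$, and $(\wh\Zc)^\cross = \wt\Zc$, $(\wt\Zc)^\cross = \wh\Zc$.

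I expect the main obstacle to be the careful verification of the dictionary, in two respects: first, that the $\cross$-adjoint under conjugation by the fractional powers really reduces to the $L^2$-adjoint (this is where the identities $K^\cross=L$, $L^\cross=K$, and the sesquilinearity conventions for $\<\cdot,\cdot\>_\paD$ must be handled precisely); and second, that ``maximal accretive'' and ``m-accretive'' coincide for densely defined operators, which is exactly the non-elementary input from Phillips' theory and relies essentially on the density of the domains. Once these are secured, the existence of $\wh A$ and the passage $\wt A=(\wh A)^*$ are standard, and all the asserted adjoint relations are purely formal consequences of the intertwining identity.
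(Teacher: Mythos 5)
Your proposal is correct and follows essentially the same route as the paper: the paper's proof likewise conjugates $\Zc$ by the fractional powers $(\De_\paD+1)^{\mp 1/4}$ to transport the problem to $L^2(\paD)$ (this is exactly the "dictionary" \eqref{e:V*ZV}--\eqref{e:V*Z*V} established in Appendix \ref{s:GenAcc}), and then applies the corollary after Theorem 1.1.2 of Phillips \cite{P59} to the conjugated operator, obtaining the maximal accretive extension and its adjoint before transporting back. Your explicit verification of the intertwining identity $(L^{-1}BK^{-1})^\cross = L^{-1}B^*K^{-1}$ and of the relation $\wt A = (\wh A)^* \subseteq A^*$ fills in precisely the details the paper delegates to \cite{EK22} and its appendix.
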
  
\begin{proof}
Using the method of \cite[Section 6]{EK22} (see also Appendix \ref{s:ProofA}), this proposition can be reduced to the corollary after \cite[Theorem 1.1.2]{P59} (the corollary after Theorem 1.1.2 in \cite{P59}  should be applied to the operator $(\De_\paD+1)^{-1/4}_{ H^{-1/2} \shortto H^0 } \ \ \Zc \ \ (\De_\paD+1)^{-1/4}_{H^0 \shortto H^{1/2}}  $, see \cite[Section 6]{EK22} and also Appendix \ref{a:AbsBC} for the details of this reduction). Note that the conventions  for dissipative operators in the Phillips paper \cite{P59} are different from the conventions of \cite{E12,EK22} that are used in the present paper.
\end{proof}

\begin{prop}[cf. \cite{EK22,K26}] 
	\label{p:EssMD}
	Let $\A_\Yc$ be an acoustic operator associated with \linebreak $\Yc \ga_0 (p) =  \gan (\vbf)$,
	where  the operator $\Yc:\dom \Yc \subseteq H^{1/2} (\paD) \to H^{-1/2} (\paD)$ is closable.
		Then  $\ove{\A_\Yc} = \A_{\ove \Yc}$. 
		 Besides, $\A_\Yc$ is essentially m-dissipative if and only  $\ove \Yc$ is maximal accretive and densely defined. 
\end{prop}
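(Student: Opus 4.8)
The plan is to first establish the purely operator-theoretic identity $\ove{\A_\Yc} = \A_{\ove\Yc}$ for every closable $\Yc$, and then to read the m-dissipativity statement off Theorem \ref{t:DtN-GIBC}. The backbone of the argument is the \emph{boundary map} $B : \dom\A_\Max \to H^{1/2}(\paD) \times H^{-1/2}(\paD)$, $B\{\vbf,p\} := \{\ga_0(p),\gan(\vbf)\}$. Since $\ab,\beta,\ab^{-1},\beta^{-1}$ are uniformly positive and bounded, the graph norm of the closed operator $\A_\Max = \A_\Min^*$ is equivalent to the product norm of $\HH(\Div,\D)\times H^1(\D)$; hence $B$ is bounded with respect to the graph norm. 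Moreover, by the surjectivity of the scalar and normal traces and the existence of their bounded right inverses (see \cite{G11,L13,KZ15,S21,K26}), the map $B$ admits a bounded right inverse $R : H^{1/2}(\paD)\times H^{-1/2}(\paD)\to\dom\A_\Max$ with $B\circ R = \mathrm{Id}$, the bound being taken in the graph norm. These two properties of $B$ are the only structural facts about the acoustic system that the proof uses.

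The inclusion $\ove{\A_\Yc}\subseteq\A_{\ove\Yc}$ is the easy one. First, $\Yc\subseteq\ove\Yc$ gives $\A_\Yc\subseteq\A_{\ove\Yc}$, because any $\{\vbf,p\}$ with $\ga_0(p)\in\dom\Yc$ and $\Yc\ga_0(p)=\gan(\vbf)$ also satisfies $\ove\Yc\ga_0(p)=\gan(\vbf)$. Second, I would verify that $\A_{\ove\Yc}$ is closed: its graph norm coincides with that of $\A_\Max$, so if $\{\vbf_n,p_n\}\in\dom\A_{\ove\Yc}$ converges in graph norm to $\{\vbf,p\}\in\dom\A_\Max$ (using that $\A_\Max$ is closed), then continuity of $B$ yields $\ga_0(p_n)\to\ga_0(p)$ in $H^{1/2}(\paD)$ and $\ove\Yc\ga_0(p_n)=\gan(\vbf_n)\to\gan(\vbf)$ in $H^{-1/2}(\paD)$; since $\ove\Yc$ is closed, this forces $\ga_0(p)\in\dom\ove\Yc$ and $\ove\Yc\ga_0(p)=\gan(\vbf)$, i.e.\ $\{\vbf,p\}\in\dom\A_{\ove\Yc}$. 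Being a closed extension of $\A_\Yc$, the operator $\A_{\ove\Yc}$ contains $\ove{\A_\Yc}$.

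The reverse inclusion $\A_{\ove\Yc}\subseteq\ove{\A_\Yc}$ is the main obstacle, and here the right inverse $R$ does the work. Given $\{\vbf,p\}\in\dom\A_{\ove\Yc}$, set $\eta:=\ga_0(p)\in\dom\ove\Yc$, so that $\gan(\vbf)=\ove\Yc\eta$, and pick $\phi_k\in\dom\Yc$ with $\phi_k\to\eta$ in $H^{1/2}(\paD)$ and $\Yc\phi_k\to\ove\Yc\eta$ in $H^{-1/2}(\paD)$. I would then form the correction
\[
\{\vbf_k,p_k\} := \{\vbf,p\} - R\{\eta,\ove\Yc\eta\} + R\{\phi_k,\Yc\phi_k\}\in\dom\A_\Max .
\]
Applying $B$ and using $B\circ R=\mathrm{Id}$ gives $B\{\vbf_k,p_k\}=\{\phi_k,\Yc\phi_k\}$, that is $\ga_0(p_k)=\phi_k\in\dom\Yc$ and $\gan(\vbf_k)=\Yc\phi_k=\Yc\ga_0(p_k)$, so $\{\vbf_k,p_k\}\in\dom\A_\Yc$. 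Since $\{\vbf_k,p_k\}-\{\vbf,p\}=R\{\phi_k-\eta,\Yc\phi_k-\ove\Yc\eta\}\to0$ in the graph norm by boundedness of $R$, the sequence converges to $\{\vbf,p\}$ in the graph norm of $\A_\Max$, whence $\{\vbf,p\}\in\dom\ove{\A_\Yc}$ with $\ove{\A_\Yc}\{\vbf,p\}=\A_\Max\{\vbf,p\}=\A_{\ove\Yc}\{\vbf,p\}$. Combining the two inclusions yields $\ove{\A_\Yc}=\A_{\ove\Yc}$ (and, as a by-product, the closability of $\A_\Yc$).

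Finally, the m-dissipativity statement follows immediately: $\A_\Yc$ is essentially m-dissipative means, by definition, that $\ove{\A_\Yc}=\A_{\ove\Yc}$ is m-dissipative, and the equivalence (i)$\Leftrightarrow$(iv) of Theorem \ref{t:DtN-GIBC} applied to $\Zc=\ove\Yc$ states precisely that $\A_{\ove\Yc}$ is m-dissipative if and only if $\ove\Yc$ is maximal accretive and densely defined. I expect no hidden difficulty in this last step, since it is a direct invocation of the already-proved Theorem \ref{t:DtN-GIBC}; the only point deserving care throughout is that all closures and continuity arguments are carried out in the graph norm of $\A_\Max$, which the norm equivalence above legitimises.
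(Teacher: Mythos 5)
Your argument is correct. The paper itself does not prove the identity $\ove{\A_\Yc}=\A_{\ove\Yc}$ from scratch: its proof of Proposition \ref{p:EssMD} is a one-line reduction, citing \cite[Remarks 6.1--6.3]{EK22} for that identity and then invoking the equivalence (i) $\Lra$ (iv) of Theorem \ref{t:DtN-GIBC} exactly as you do in your last step. What you have done differently is to reconstruct the content of the cited remarks directly in the concrete acoustic setting, via the joint boundary map $B\{\vbf,p\}=\{\ga_0(p),\gan(\vbf)\}$ together with its continuity and bounded right-invertibility with respect to the graph norm of $\A_\Max$. This is precisely the structural content of the m-boundary tuple of Example \ref{ex:acute} (continuity of $\Ga$ on $\dom\A_\Max$ and surjectivity onto $H^{1/2}(\paD)\oplus H^{-1/2}(\paD)$), so your route is faithful to what the abstract reference encodes, while being self-contained; the trade-off is that the abstract version of \cite{EK22} applies verbatim to any operator admitting an m-boundary tuple (e.g.\ Maxwell), whereas yours is tailored to $\A_\Max$. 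Two small points deserve a word: closability of $\A_\Yc$ is automatic (it is a restriction of the closed operator $\A_\Max$), so the ``by-product'' remark is not needed to make sense of $\ove{\A_\Yc}$; and the existence of a \emph{bounded linear} right inverse $R$ should be justified either by the explicit bounded right inverses of $\ga_0$ and $\gan$ separately (which exist since the two components of $B$ act on independent variables) or by the open mapping theorem combined with the orthogonal decomposition of the Hilbert space $\dom\A_\Max$ along $\ker B$ --- mere surjectivity of $B$ would not suffice in a general Banach setting. With those remarks, the correction-sequence argument $\{\vbf_k,p_k\}=\{\vbf,p\}+R\{\phi_k-\eta,\Yc\phi_k-\ove\Yc\eta\}$ is clean and complete.
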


\begin{proof}
	This proposition follows from the combination of  \cite[Remarks 6.1-6.3]{EK22} with the part
	(i) $\Lra$ (iv) of Theorem \ref{t:DtN-GIBC}.
\end{proof}

\begin{proof}[Proof of Theorem \ref{t:Inter}]
Let $s \ge 1/2$ and $\imp \in \MM (H^{1/2}_\DepaD , H^{-s}_\DepaD ) = \MM (H^{s}_\DepaD , H^{-1/2}_\DepaD ) $. 

(i) $\Lra$ (ii). 
Since $ \MM (H^{s}_\DepaD , H^{-1/2}_\DepaD ) \imb  \MM (H^{s}_\DepaD , H^{-s}_\DepaD )$,
the operator $\mulp_\imp^{H^{s} \sto H^{-s}} \in \Lc (H^{s}_\DepaD , H^{-s}_\DepaD ) $
is well-defined and coincide with $\mulp_{\imp}^{s, -1/2}$ as a map.

Assume that, for all $g \in H^\infty_\DepaD$, we have $\int_\paD \re(\imp) |g|^2  \ge 0 $,
which can be written as $ \re \< \imp g,g\>_\paD \ge 0$. This inequality can be extended by continuity to 
\[ \text{$ \re \< \mulp_\imp^{H^{s} \sto H^{-s}} g,g\>_\paD \ge 0$ for all $g \in H^{s}_\DepaD$.}
\]
This implies the accretivity of $\mulp_{\imp}^{s, -1/2}$. 

Assume that $\mulp_{\imp}^{s, -1/2}$ is accretive. Then $ \re \< \imp g,g\>_\paD \ge 0$ for all 
$g \in H^{s}_\DepaD$. This implies statement (i).

(ii) $\Rightarrow$ (iii) and (ii) $\Rightarrow$ (iv). 
Let $\mulp_{\imp}^{s, -1/2}$ be accretive. Statement (i) implies that $\mulp_{\ove \imp}^{s, -1/2}$ is
also  accretive. By Lemma \ref{l:Mf12-s}, $\mulp_{\imp}^{s, -1/2}$ and $\mulp_{\ove \imp}^{s, -1/2}$ are densely defined. Besides, $(\mulp_{\ove \imp}^{s, -1/2})^\cross = \mulp_{\imp}^{1/2, -s}$. By Proposition \ref{p:tPh59}, there exists a maximal accretive densely defined extension 
$\wh \Zc$ of $\mulp_{\imp}^{s, -1/2}$ and a maximal accretive densely defined restriction $\wt \Zc$ of $(\mulp_{\ove \imp}^{s, -1/2})^\cross = \mulp_{\imp}^{1/2, -s}$. The part (i) $\Leftrightarrow$ (iv) of Theorem \ref{t:DtN-GIBC} applied to $\wt \Zc$ and $\wh \Zc$ completes the proof of statements (iii) and (iv).

(iii) $\Rightarrow$ (i).  Assume that there exists a restriction $\wt \Zc$ of $\mulp_{\imp}^{1/2, -s}$ such that $\A_{\wt \Zc} $ is m-dissipative.  The part (i) $\Leftrightarrow$ (iv) of Theorem \ref{t:DtN-GIBC}
implies that $\wt \Zc$ is maximal accretive and densely defined. Proposition \ref{p:tPh59} applied to 
$\wt \Zc$ implies that $\wt \Zc^\cross$ is also  maximal accretive. However, 
$\wt \Zc^\cross$ is an extension of  $(\mulp_{\imp}^{1/2, -s})^\cross$. By Lemma \ref{l:Mf12-s},
$(\mulp_{\imp}^{1/2, -s})^\cross =\ove{\mulp_{\ove \imp}^{s,-1/2}}$. Thus, $\mulp_{\ove \imp}^{s, -1/2}$ is accretive, and so, statement (i) holds.

(iv) $\Rightarrow$ (ii). Assume that (iv) holds. The part (i) $\Leftrightarrow$ (iv) of Theorem \ref{t:DtN-GIBC} yields that $\wh \Zc $ is accretive, and so is  $\mulp_{\imp}^{s, -1/2}$ (as a restriction of $\wh \Zc $).
\end{proof}

\begin{proof}[Proof of Theorem \ref{t:Macc}]
Let $\imp \in \MM (H^{1/2}_\DepaD , H^{-s}_\DepaD )$ for a certain $s \ge 1/2$ and assume 
 $\int_\paD \re (\imp) |g|^2 \ge 0 $ for all $g \in H^\infty_\DepaD$.
Lemma \ref{l:Mf12-s} and the part (i) $\Lra$ (ii) of Theorem \ref{t:Inter} imply that 
$\mulp_{\imp}^{s, -1/2}$ and $\mulp_{\ove \imp}^{s, -1/2}$ are densely defined and accretive.

The implication (i) $\Rightarrow$ (ii) follows from Lemma \ref{l:Mf12-sAcc}.
Let us prove (ii) $\Rightarrow$ (i). Suppose $\ove{\mulp_\imp^{s,-1/2}} = \mulp_\imp^{1/2,-s}$.
Since $\mulp_{\imp}^{s, -1/2}$ is accretive, so is its closure
$\mulp_\imp^{1/2,-s}$. This completes the proof of the equivalence (i) $\Lra$ (ii).

Now, we put $\Yc = \mulp_\imp^{s,-1/2}$ and show how Proposition \ref{p:EssMD} yields (i) $\Lra$ (iii) and  (i) $\Lra$ (iv).

Assume (i). Then (ii) also holds, and $\Yc$ is accretive, densely defined, and has the closure $\Zc = \ove{\Yc} = \mulp_\imp^{1/2,-s}$. Hence, $\Zc = \mulp_\imp^{1/2,-s}$ is accretive, and by Lemma \ref{l:Mf12-sAcc}, is also maximal accretive and densely defined. Theorem \ref{t:DtN-GIBC}
yields that $\Ac_\Zc$ is m-dissipative. Proposition \ref{p:EssMD} implies that $\Ac_\Yc$ is essentially m-dissipative. Thus, (i) implies (iii) and  (iv).

Assume (iii), i.e., assume that $\ove{\A_\Yc}$ is m-dissipative. Recall that $\Yc = \mulp_\imp^{s,-1/2}$ is accretive and, by Lemma \ref{l:Mf12-s}, closable. Proposition \ref{p:EssMD} implies $\ove{\A_\Yc} = \A_{\ove \Yc} $. By Theorem \ref{t:DtN-GIBC}, $\ove \Yc$ is a densely defined maximal accretive operator and there exist a densely defined accretive operator $Q$ such that $\ove \Yc = Q^\cross$. However, $\ove \Yc = \ove{\mulp_\imp^{s,-1/2}} = (\mulp_{\ove \imp}^{1/2, -s})^\cross $ and $(\ove \Yc)^\cross =  \mulp_{\ove \imp}^{1/2, -s}$  due to Lemma \ref{l:Mf12-s}.  
Hence, $\ove Q = (\ove \Yc)^\cross = \mulp_{\ove \imp}^{1/2, -s}$. Since $Q$ is accretive, $\ove Q = \mulp_{\ove \imp}^{1/2, -s}$ is also  accretive.  Lemma \ref{l:Mf12-sAcc} (i) applied to $\mulp_{\ove \imp}^{1/2, -s}$ implies that $\Zc = \mulp_\imp^{1/2,-s}$ is accretive, i.e., statement (i) is proved.

Assume (iv), i.e., assume that $\A_\Zc$ with $\Zc = \mulp_\imp^{1/2,-s}$ is m-dissipative. By Theorem \ref{t:DtN-GIBC}, $\Zc$ is maximal accretive. This implies statement (i).
\end{proof}

\section{Distributions as multipliers on $\paD$ and m-dissipativity}
\label{s:Gen}

\subsection{Sobolev multipliers on $C^{k-1,1}$-boundaries $\paD$}
\label{s:MultGen}

We assume in this section that, for a bounded domain $\D \subset \RR^d$,
\begin{gather} \label{a:Ck-11}
\text{the boundary $\paD$ is of $C^{k-1,1}$-regularity with $k \in \NN$ such that $k> (d-1)/2 $.}
\end{gather}
Note that, if $d=2$, this assumption is valid for every Lipschitz domain $G$.

The $C^{k-1,1}$-regularity of $\paD$  ensures that that fractional Sobolev spaces $H^s (\paD)$ are well-defined for all $s \in (0,k] $ (see \cite{G11} for the basic definitions).
In turn,
the spaces $H^{-s} (\paD)$ of negative regularity can be defined as duals of $H^s (\paD)$ w.r.t.  
$H^0 (\paD) :=L^2 (\paD) $ for  all $s \in (0,k] $, and the sesquilinear pairing $\<\cdot,\cdot\>_\paD$ can be extended to the mutually dual pairs of these spaces together with the definitions of accretive, nonnegative, and $\cross$-adjoint operators analogously to Section \ref{s:m-disGIBC}.

In particular, 
$T:\dom T \subseteq H^s (\paD) \to H^{-s} (\paD)$ with $s \in [-k,k]$ is called accretive (nonnegative) if 
$ \re \<T g,g\>_\paD \ge 0 $ (resp., $\<T g,g\>_\paD \ge 0$) for all $g \in \dom T$.

\begin{prop} \label{p:HkBAl}
Under assumption \eqref{a:Ck-11}, the space 
$H^k (\paD)$ is a Banach algebra in the sense that, for all $f,g \in H^k (\paD)$,
the product $fg$ belongs to $ H^k (\paD)$  and 
\begin{gather} \label{e:BAlg}
\| fg \|_{H^k (\paD)} \lesssim  \| f\|_{H^k (\paD)} \|g \|_{H^k (\paD)} . 
\end{gather}
\end{prop}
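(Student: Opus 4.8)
The plan is to reduce the statement to the Euclidean model case by means of the atlas that defines $H^k(\paD)$, the analytic core being the classical fact that $H^k(\RR^{d-1})$ is a Banach algebra precisely when $k>(d-1)/2$. I set $n := d-1$ for the dimension of $\paD$; the hypothesis $k>(d-1)/2$ is exactly $k>n/2$, which is the threshold for the Sobolev embedding $H^k(\RR^n)\imb L^\infty(\RR^n)\cap C(\RR^n)$. This embedding is the one place where the numerical condition on $k$ really enters.

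First I would establish the Euclidean estimate: for $k\in\NN$ with $k>n/2$ and $f,g\in H^k(\RR^n)$,
\begin{gather*}
\|fg\|_{H^k(\RR^n)} \lesssim \|f\|_{L^\infty}\|g\|_{H^k(\RR^n)} + \|f\|_{H^k(\RR^n)}\|g\|_{L^\infty} \lesssim \|f\|_{H^k(\RR^n)}\|g\|_{H^k(\RR^n)} .
\end{gather*}
For the first inequality (a Moser/Kato--Ponce-type estimate) I would expand $\pa^\alpha(fg)$ by the Leibniz rule for $|\alpha|\le k$ and bound each term $\|\pa^\beta f\,\pa^{\alpha-\beta}g\|_{L^2}$ by Hölder together with the Gagliardo--Nirenberg interpolation inequalities $\|\pa^\beta f\|_{L^{2k/|\beta|}}\lesssim \|f\|_{L^\infty}^{1-|\beta|/k}\|f\|_{H^k}^{|\beta|/k}$ (and symmetrically for $g$), the exponents matching because $|\beta|/(2k)+(|\alpha|-|\beta|)/(2k)\le 1/2$. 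The second inequality then follows from $H^k(\RR^n)\imb L^\infty(\RR^n)$, available exactly under $k>n/2$.

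Next I would globalize using the atlas-based description of the norm. Fix a finite atlas $\{(U_j,\ka_j)\}_j$ of the compact manifold $\paD$ with $C^{k-1,1}$ charts and a subordinate partition of unity $\{\phi_j\}$, so that $\|u\|_{H^k(\paD)}$ is equivalent to $\sum_j\|(\phi_j u)\circ\ka_j^{-1}\|_{H^k(\RR^n)}$; choosing $\psi_j\in C^\infty_c(U_j)$ with $\psi_j\equiv 1$ on $\supp\phi_j$, I would write $\phi_j fg = \phi_j(\psi_j f)(\psi_j g)$ and push forward, so that $(\phi_j fg)\circ\ka_j^{-1}$ is a product of the three compactly supported factors $\phi_j\circ\ka_j^{-1}$, $(\psi_j f)\circ\ka_j^{-1}$, $(\psi_j g)\circ\ka_j^{-1}$ in $H^k(\RR^n)$. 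Applying the Euclidean algebra estimate and summing over $j$ yields \eqref{e:BAlg}, provided one knows that composition with the charts is a bounded operation $H^k\to H^k$ and that multiplication by the fixed cutoffs is bounded on $H^k(\RR^n)$ (the latter being itself a special case of the Euclidean estimate).

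The hard part is precisely the low regularity of the charts: verifying that $u\mapsto u\circ\ka_j^{\pm1}$ maps $H^k$ boundedly into $H^k$ when $\ka_j$ is only $C^{k-1,1}$. Here I would invoke the chain rule in Faà di Bruno form and observe that the top-order ($k$-th) derivatives of $\ka_j$ enter each term of $\pa^\alpha(u\circ\ka_j)$ with $|\alpha|=k$ at most linearly and always multiplied by a first-order derivative of $u$; since a $C^{k-1,1}$ map has $k$-th order derivatives that exist a.e.\ and are bounded, while all lower-order derivatives are continuous and bounded on the compact chart, every term is controlled in $L^2$ by $\|u\|_{H^k}$. This is exactly why the regularity hypothesis is stated as $C^{k-1,1}$ rather than $C^k$, and it is the only step where the non-smoothness of $\paD$ must be handled with care; once it is in place, the remaining localization and cutoff estimates are routine.
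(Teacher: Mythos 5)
Your proof is correct and follows essentially the same route as the paper: both reduce the statement to the Euclidean case in dimension $d-1$ via localization with a $C^{k-1,1}$ atlas and then use the Banach-algebra property of $H^k$ in that dimension under $k>(d-1)/2$. The only difference is that the paper simply cites Strichartz \cite{S67} for the flat-case algebra property on a bounded domain of $\RR^{d-1}$, whereas you prove it directly via the Leibniz/Gagliardo--Nirenberg (Moser) product estimate and, usefully, spell out the boundedness of composition with the $C^{k-1,1}$ charts that the paper leaves implicit in the phrase ``apply the localization.''
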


\begin{proof}
If $\Nc$ is a bounded domain in $\RR^{d-1}$ with a smooth enough boundary,
 the combination of Strichartz's results \cite[Theorems 5.2 and 2.1]{S67} implies that 
$ H^k (\Nc) $ is  a Banach algebra.  This and the $C^{k-1,1}$-regularity of $\paD$ allows one to apply the localization in order to get \eqref{e:BAlg}.
  \end{proof}

Using Proposition \ref{p:HkBAl} and employing  $H^k (\paD) $ instead of the  test function space $C^\infty (\RR^n)$, we adapt the definition of the space of Sobolev (i.e., distributional) multipliers  $M[H^{s_1} (\RR^n) \sto H^{s_2} (\RR^n)]$  to the case of a boundary $\paD$ satisfying \eqref{a:Ck-11}.

Some differences in the process of the adaptation of the theory of Sobolev multipliers  appear. One obvious  reason is the compactness of $\paD$. Other (mainly technical) issues 
appear due to the use of fractional Sobolev-type spaces on nonsmooth surfaces $\paD$, see the discussions in \cite{T02,GMMM11}.

In the rest of the paper, the shortened notation $H^s $ will be used often  for the spaces $H^s (\paD)$ if this does not lead to ambiguities.

\begin{defn} \label{d:MMs1s2}
Assume \eqref{a:Ck-11}. Let $s_1,s_2 \in [-k,k]$. Let $\vphi \in H^{-k} (\paD)$ be such that 
\begin{gather} \label{e:MGen}
|\<  \vphi , \ove g_1 g_2 \>_\paD |  \lesssim \| g_1 \|_{H^{s_1} (\pa \D)} \| g_2 \|_{H^{s_2} (\pa \D)} \qquad \text{ for all } \quad g_1, g_2  \in H^k (\pa \D).
\end{gather}
\item[(i)] Then we say that $\vphi$ belongs to the space
$\MM [H^{s_1} (\pa \D) \sto H^{-s_2} (\pa \D) ]$ 
of Sobolev multipliers from $H^{s_1} (\pa \D)$ to $H^{-s_2} (\pa \D)$.
\item[(ii)] Let $
\mul_\vphi^{H^{s_1} \sto H^{-s_2}} \in \Lc (H^{s_1}  ,H^{-s_2} )
$
be 
the unique bounded operator from $H^{s_1} (\pa \D)$ to $H^{-s_2} (\pa \D)$ associated 
with the sesquilinear form $\<  \vphi , \overline{g_1} g_2 \>_\paD$, where $g_1, g_2  \in H^k $. If
$
\mul_\vphi^{H^{s_1} \sto H^{-s_2}} \in  \Sf_\infty (H^{s_1} (\paD) ,H^{-s_2} (\paD)) ,
$
we  say that 
$\vphi$ belongs to the corresponding space $ \MM^{\Sf_\infty} [H^{s_1} (\pa \D) \sto H^{-s_2} (\pa \D) ]$ of compact Sobolev multipliers.
\end{defn}

Definition \ref{d:MMs1s2}	uses the dense continuous embeddings $H^k (\pa \D) \imb H^s (\paD)$ for $s \in [-k,k]$. The density in these embeddings follows from the method of local coordinates \cite{T78,G11,GMMM11}.

Using the constant function $\one $ as $g_1 $ or $g_2$ in \eqref{e:MGen}, we get the following  observations.
The operator norm of the space  $\Lc (H^{s_1}, H^{-s_2})$ makes 
$ \MM [H^{s_1}  \sto H^{-s_2}  ]$ a normed space (and so $ \MM^{\Sf_\infty} [H^{s_1}  \sto H^{-s_2}  ]$ is also a normed space with the same operator norm).
Indeed, if $\vphi \in \MM [H^{s_1}  \sto H^{-s_2}  ]$
is not a zero vector of $H^{-k} $, then there exists $g \in H^k $
such that 
$0 \neq \<  \vphi , g \>_\paD
= \< \mul_\vphi^{H^{s_1} \shortto H^{-s_2} }   \one , g \>_\paD .
$

Besides, Definition \ref{d:MMs1s2} and $\one \in H^k $ imply the continuous embedding
\begin{gather} \label{e:Mds1s2s2s1}
\MM [H^{s_1} (\pa \D) \sto H^{-s_2} (\pa \D) ] = \MM [H^{s_2} (\pa \D) \sto H^{-s_1} (\pa \D) ] \imb H^{-\min\{s_1,s_2\}} (\paD). 
\end{gather}

Let $\vphi  \in H^{-s} (\paD)$ with $s > 0$.
The complex conjugate generalized function $\ove \vphi $ is defined by the equalities $ \< \ove \vphi , g\>_\paD =\<   \ove g , \vphi \>_\paD $, where $g$ runs through $H^s$. So,
$\ove \vphi \in H^{-s} $ and $\ove{\ove \vphi}  = \vphi$.
The real and imaginary parts of $\vphi  \in H^{-s} $ are 
$
\re \vphi  := \frac{1}{2} (\vphi+\ove \vphi) \in H^s$ and $\im \vphi := \frac{1}{2i} (\vphi- \ove \vphi) \in H^s$,  respectively. 

For $\vphi \in \MM [H^{s_1} \sto H^{-s_2}]$, Definition \ref{d:MMs1s2} yields $\ove \vphi \in \MM [H^{s_1} \sto H^{-s_2}] =
\MM [H^{s_2} \sto H^{-s_1}]$
and   $(\mul_\vphi^{H^{s_1} \sto H^{-s_2}})^\cross = \mul_{\ove \vphi}^{H^{s_2} \sto H^{-s_1}}$. Hence, $ \MM^{\Sf_\infty} [H^{s_1} \sto H^{-s_2}] =
\MM^{\Sf_\infty} [H^{s_2} \sto H^{-s_1}]$.
Combining \eqref{e:Mds1s2s2s1} with \eqref{e:BAlg}, we see that 
\begin{equation*} 
\MM [H^k (\pa \D) \sto H^{-k} (\pa \D) ] = H^{-k} (\paD) \qquad \text{ (up to equivalence of norms)}.
\end{equation*}
Thus, $\MM [ H^k  \sto H^{-k}  ]$ is a Banach space.

The compact embeddings 
$H^t \imb \imb H^s $ hold for all $-k \le s < t \le k$ and can be proved using an appropriate decomposition of unity (see \cite{T92,T02}). Hence,
\begin{gather} \label{e:MinM12S}
\MM [ H^{s_1} (\pa \D) \sto H^{-s_2} (\pa \D) ] \imb \MM [ H^{t_1} (\pa \D) \sto H^{-t_2} (\pa \D) ]  \quad \text{ if $ s_1 \le t_1 $ and $ s_2 \le t_2$;}\\
\bigcup_{\substack{s_1 \le t_1, \ s_2 \le t_2\\
		s_1+s_2 < t_1 + t_2}}
	 \MM [ H^{s_1} (\pa \D) \sto H^{-s_2} (\pa \D) ] \subseteq \MM^{\Sf_\infty} [H^{t_1} (\pa \D) \sto H^{-t_2} (\pa \D) ] . \label{e:MinMSinf}
\end{gather}

\begin{lem} \label{l:MMBsp}
For arbitrary $s_1,s_2 \in [-k,k]$,   
$\MM [H^{s_1} (\pa \D) \sto H^{-s_2} (\pa \D) ]$ is a Banach space.
Its closed subspace $\MM^{\Sf_\infty} [H^{s_1} (\pa \D) \sto H^{-s_2} (\pa \D) ]$ is a separable Banach space.
\end{lem}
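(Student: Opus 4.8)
The plan is to realize $\MM[H^{s_1}(\paD) \sto H^{-s_2}(\paD)]$ isometrically inside the Banach space $\Lc(H^{s_1}, H^{-s_2})$ through the assignment $\vphi \mapsto \mul_\vphi^{H^{s_1}\sto H^{-s_2}}$, which carries exactly the $\MM$-norm by construction. Completeness of $\MM$ is then equivalent to the image of this assignment being closed, and I would establish it by a direct Cauchy-sequence argument. Given an $\MM$-Cauchy sequence $\{\vphi_n\}$, I would first invoke the continuity of the embedding \eqref{e:Mds1s2s2s1}, namely $\MM \imb H^{-\min\{s_1,s_2\}}$, to see that $\{\vphi_n\}$ is Cauchy in $H^{-\min\{s_1,s_2\}} \subseteq H^{-k}$ and hence converges there to some $\vphi$; simultaneously $\{\mul_{\vphi_n}\}$ is Cauchy in the complete space $\Lc(H^{s_1}, H^{-s_2})$ and converges to some $T$.

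The central step is the identification $T = \mul_\vphi$, which at the same time shows $\vphi \in \MM$. For $g_1, g_2 \in H^k$ one has the defining identity $\<\mul_{\vphi_n} g_1, g_2\>_\paD = \<\vphi_n, \ove{g_1} g_2\>_\paD$. Passing to the limit, the left-hand side tends to $\<T g_1, g_2\>_\paD$ by operator-norm convergence, while the right-hand side tends to $\<\vphi, \ove{g_1} g_2\>_\paD$: here the Banach-algebra property (Proposition \ref{p:HkBAl}) guarantees $\ove{g_1} g_2 \in H^k \subseteq H^{\min\{s_1,s_2\}}$, so that the continuous duality pairing between $H^{-\min\{s_1,s_2\}}$ and $H^{\min\{s_1,s_2\}}$ applies and the convergence $\vphi_n \to \vphi$ transfers. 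The resulting identity $\<T g_1, g_2\>_\paD = \<\vphi, \ove{g_1} g_2\>_\paD$ shows that $\vphi$ obeys \eqref{e:MGen} with constant $\|T\|$, hence $\vphi \in \MM[H^{s_1}\sto H^{-s_2}]$, and, by density of $H^k$ in $H^{s_1}$ and $H^{s_2}$, that $\mul_\vphi = T$. Consequently $\|\vphi_n - \vphi\|_\MM = \|\mul_{\vphi_n} - T\| \to 0$, so $\MM$ is a Banach space.

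For the subspace $\MM^{\Sf_\infty}$, I would use that $\Sf_\infty(H^{s_1}, H^{-s_2})$ is closed in $\Lc(H^{s_1}, H^{-s_2})$; therefore its preimage under the isometry, which is precisely $\MM^{\Sf_\infty}$, is closed in $\MM$ and hence is itself a Banach space. Separability then follows because $H^{s_1}$ and $H^{-s_2}$ are separable Hilbert spaces, whence $\Sf_\infty(H^{s_1}, H^{-s_2})$ is separable (compact operators are operator-norm limits of finite-rank ones, which form a countable dense family built from orthonormal bases with coefficients in $\QQ + \ii\QQ$). As an isometric copy of a subset of the separable metric space $\Sf_\infty(H^{s_1}, H^{-s_2})$, the space $\MM^{\Sf_\infty}$ inherits separability.

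I expect the main obstacle to be the limit-identification step: one must ensure that the two a priori different convergences, operator-norm convergence of $\mul_{\vphi_n}$ and $H^{-\min\{s_1,s_2\}}$-convergence of $\vphi_n$, yield compatible limits on the same test products, and that those products $\ove{g_1} g_2$ remain in $H^k$ so the pairing is legitimate and stable under passage to the limit. This is exactly the place where Proposition \ref{p:HkBAl} is indispensable; everything else is a routine consequence of completeness of $\Lc$, closedness of $\Sf_\infty$, and hereditary separability of separable metric spaces.
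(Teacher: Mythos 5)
Your proposal is correct and follows essentially the same route as the paper's proof: both realize $\MM$ isometrically in $\Lc(H^{s_1},H^{-s_2})$, identify the limit $\vphi$ of a Cauchy sequence in a larger distribution space ($H^{-\min\{s_1,s_2\}}$ in your version, $H^{-k}$ in the paper's — an immaterial difference), pass to the limit in the defining identity on test products $\ove{g_1}g_2\in H^k$ via Proposition \ref{p:HkBAl}, and conclude by density of $H^k$; the closedness and separability of $\MM^{\Sf_\infty}$ are handled identically. Your write-up merely spells out the limit-identification and separability steps in more detail than the paper does.
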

\begin{proof}
Let $\{\vphi_n\}_{n\in \NN}$ be a Cauchy sequence  in $ \MM [H^{s_1}  \sto H^{-s_2}  ]$. Denote by $T$ the limit of operators $\mul_{\vphi_n}^{H^{s_1}  \sto H^{-s_2}}$ in  $\Lc (H^{s_1}, H^{-s_2}) $. We need to prove that $T = \mul_\vphi^{H^{s_1}  \sto H^{-s_2}} $
for a certain $\vphi \in \MM [H^{s_1}  \sto H^{-s_2}  ] $.

We have seen that the Banach space $\MM [ H^k  \sto H^{-k}  ] = H^{-k} $ is the largest of all the normed spaces of multipliers defined in Definition \ref{d:MMs1s2}.
By \eqref{e:MinM12S} and \eqref{e:Mds1s2s2s1}, $\{\vphi_n\}$ has a limit $\vphi$ in $H^{-k} $. One can pass to the limit in Definition \ref{d:MMs1s2} applied to $\vphi_n$ and 
get that $\vphi $ belongs to $\MM [H^{s_1}  \sto H^{-s_2}  ] $ with 
$ \| \vphi\|_{\MM [H^{s_1}  \sto H^{-s_2}  ]}
\le \sup_{n\in \NN} \|\vphi_n\|_{\MM [H^{s_1}  \sto H^{-s_2}  ]} <\infty  .
$
Moreover, 
$ \<Tg_1,g_2\>_\paD = \<\mul_\vphi^{H^{s_1}  \sto H^{-s_2}} g_1,g_2\> $ for all $ g_1,g_2 \in H^k$.
Since  $H^k $ is dense in $H^{s_1}$ and $H^{s_2}$, we get $T=\mul_\vphi^{H^{s_1}  \sto H^{-s_2}}$.  This proves the completeness of  $\MM [H^{s_1} \sto H^{-s_2}  ]$.

By the definition, $\MM^{\Sf_\infty} [H^{s_1}  \sto H^{-s_2} ]$ can be considered as a subspace of \linebreak $\Sf_\infty (H^{s_1},H^{-s_2}) $.
Since $\Sf_\infty (H^{s_1}, H^{-s_2} )$ is closed in $\Lc (H^{s_1}, H^{-s_2}) $, we see that \linebreak
$\MM^{\Sf_\infty} [H^{s_1}  \sto H^{-s_2} ]$ is a closed subspace of $\MM [H^{s_1} \sto H^{-s_2}  ]$.  Since all the Hilbert spaces $H^s$ are separable, $\Sf_\infty (H^{s_1},H^{-s_2}) $ is also separable, and so is
 $\MM^{\Sf_\infty} [H^{s_1}  \sto H^{-s_2} ]$.
\end{proof}

 For $s \in [0,k]$ and $\vth \in (0,1)$, the following interpolation formula is valid 
\begin{gather} \label{e:IntHs-s}
	H^{s(1-2\vth)} (\paD) = [H^s (\paD),H^{-s} (\paD)]_\vth .
\end{gather}
In  Appendix \ref{s:FSp}, we explain how \eqref{e:IntHs-s} can be obtained from the known results (this formula can be found in \cite{T92} for compact  Riemann $C^{\infty}$-manifolds,
but for a $C^{k-1,1}$-boundary $\paD$ the proof of \cite{T92} requires modifications).

For $\vphi \in \MM [H^{s_0} \sto  H^{-s_1}] =\MM [H^{s_1} \sto  H^{-s_0}]$ with $s_0,s_1 \in [-k,k]$,
we interpolate similarly to \eqref{e:MpInt} between the bounded operators  $ \mul_\vphi^{H^{s_0} \sto H^{-s_1}}$ and  $ \mul_\vphi^{H^{s_1} \sto H^{-s_0}}$ using  \eqref{e:IntHs-s}.  This gives  for all $\vth \in [0,1]$ the embedding
\begin{gather}\label{e:MInt}
	\MM [H^{s_0}  \sto  H^{-s_1}] = \MM [H^{s_1}  \sto H^{-s_0}  ]\imb 
	\MM [H^{s_0(1-\vth) +s_1 \vth}  \sto H^{-s_1 (1-\vth) - s_0 \vth}  ] .
\end{gather}

Let $(d-1)/2 < t \le k$. Then  \cite[Theorems 5.2 and 2.1]{S67}  combined with the arguments of the proof of Proposition \ref{p:HkBAl} imply that
\begin{gather} \label{e:BAlgs1}
\| fg \|_{H^{t}} \lesssim  \| f\|_{H^{t}} \|g \|_{H^{t} }, \qquad  f,g \in H^{t} (\paD),
\end{gather}
and that the space $H^t $ is a Banach algebra.
For $f,g_1,g_2 \in  H^t$, \eqref{e:BAlgs1} implies  
\[
|\<  f g_1 , g_2 \>_\paD |\lesssim  \| f g_1 \|_{H^{t} } \| g_2 \|_{H^{-t}}\lesssim \| f \|_{H^{t} } \|g_1 \|_{H^{t} } \|g_2 \|_{H^{-t}}  ,
\]
which due to \eqref{e:Mds1s2s2s1} and Definition \ref{d:MMs1s2} leads to 
$
f \in \MM [H^{t}  \sto H^{t}  ] 
= \MM [H^{-t}  \sto H^{-t} ]
$
and to the two  embeddings 
\begin{gather} \label{e:HkinMH-k}
 H^t (\paD) \imb  \MM [H^{\pm t} (\pa \D) \sto H^{ \pm t} (\pa \D) ]  \quad
\text{ for each } t \in ((d-1)/2,k].
\end{gather}
Formula \eqref{e:MInt} applied to these two embeddings gives 
\begin{gather} \label{e:HkinMH-s}
H^t (\paD) \imb \MM [H^s (\pa \D) \sto H^s (\pa \D) ] , \quad \text{ for  $s \in [-t,t]$ and $ t \in ((d-1)/2,k]$.}
\end{gather}

Now, the results of Maz’ya \& Shaposhnikova \cite{MS04,MS09} and of Neiman-zade \& Shkalikov \cite{NS06} on $M[H^{s_1} (\RR^n) \sto H^{-s_2} (\RR^n)]$ with $|s_2|>n/2$ can be carried over to $\paD$ in the following somewhat modified form.

\begin{thm} \label{t:H-s}
Assume \eqref{a:Ck-11}. Let $ |s_1| \le s_2 \le k$ and $(d-1)/2 < s_2$. Then
\[
H^{-s_1} (\paD) = \MM [H^{s_1} (\pa \D) \sto H^{-s_2} (\pa \D) ] = \MM [H^{s_2} (\pa \D) \sto H^{-s_1} (\pa \D) ] .
\] 
\end{thm}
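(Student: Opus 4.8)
The plan is to prove the one nontrivial identity $H^{-s_1}(\paD) = \MM[H^{s_1}(\paD) \sto H^{-s_2}(\paD)]$; the remaining equality $\MM[H^{s_1}(\paD) \sto H^{-s_2}(\paD)] = \MM[H^{s_2}(\paD) \sto H^{-s_1}(\paD)]$ is merely the symmetry of the multiplier spaces recorded just after Definition \ref{d:MMs1s2}. I would establish the two inclusions of the first identity separately. The inclusion ``$\subseteq$'' costs nothing beyond the general embedding \eqref{e:Mds1s2s2s1}: since $|s_1| \le s_2$ forces $s_1 \le s_2$, one has $\min\{s_1,s_2\} = s_1$, and \eqref{e:Mds1s2s2s1} directly gives $\MM[H^{s_1} \sto H^{-s_2}] \imb H^{-s_1}(\paD)$.

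The substance is the reverse inclusion $H^{-s_1}(\paD) \imb \MM[H^{s_1} \sto H^{-s_2}]$. Fix $\vphi \in H^{-s_1}(\paD)$ and, following Definition \ref{d:MMs1s2}, take arbitrary $g_1, g_2 \in H^k(\paD)$; the goal is the bound $|\<\vphi, \ove{g_1} g_2\>_\paD| \lesssim \|g_1\|_{H^{s_1}} \|g_2\|_{H^{s_2}}$ required by \eqref{e:MGen}. The idea is to read $g_2$ as a \emph{multiplier} acting on the factor $\ove{g_1}$ and then to dualize. Since $s_2 \in ((d-1)/2, k]$ and $s_1 \in [-s_2,s_2]$, the multiplier embedding \eqref{e:HkinMH-s} (applied with $t = s_2$ and $s = s_1$) asserts that multiplication by $g_2 \in H^{s_2}(\paD)$ is bounded from $H^{s_1}(\paD)$ to itself, with operator norm controlled by $\|g_2\|_{H^{s_2}}$. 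Applying this to $\ove{g_1} \in H^{s_1}(\paD)$ and using that complex conjugation is isometric on $H^{s_1}(\paD)$, I obtain the product estimate
\[
\|\ove{g_1}\, g_2\|_{H^{s_1}(\paD)} \lesssim \|g_2\|_{H^{s_2}(\paD)} \, \|\ove{g_1}\|_{H^{s_1}(\paD)} = \|g_2\|_{H^{s_2}(\paD)} \, \|g_1\|_{H^{s_1}(\paD)} .
\]
The duality between $H^{-s_1}(\paD)$ and $H^{s_1}(\paD)$ then yields $|\<\vphi, \ove{g_1} g_2\>_\paD| \le \|\vphi\|_{H^{-s_1}} \, \|\ove{g_1} g_2\|_{H^{s_1}}$, and combining this with the displayed estimate gives exactly \eqref{e:MGen}. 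Hence $\vphi \in \MM[H^{s_1} \sto H^{-s_2}]$.

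In this argument the genuinely analytic content has already been absorbed into the multiplier embedding \eqref{e:HkinMH-s}, which in turn rests on Strichartz's Banach-algebra theorem (Proposition \ref{p:HkBAl} and \eqref{e:BAlgs1}) together with the interpolation \eqref{e:MInt}; the threshold $(d-1)/2 < s_2$ is precisely the Sobolev exponent on the $(d-1)$-dimensional boundary that makes $H^{s_2}(\paD)$ act as a multiplier algebra on the whole scale $H^{s_1}$ with $|s_1| \le s_2$. For this reason I expect no serious obstacle: what remains is the bookkeeping of the duality pairing and the verification that $s_1, s_2$ meet the hypotheses of \eqref{e:HkinMH-s}. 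The only point deserving a moment's care is the conjugation convention in \eqref{e:MGen}, but since conjugation is isometric on every $H^{s_1}(\paD)$, the placement of the bar does not affect the estimate.
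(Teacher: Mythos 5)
Your proof is correct and takes essentially the same route as the paper: the inclusion $\MM[H^{s_1}\sto H^{-s_2}]\imb H^{-s_1}$ from \eqref{e:Mds1s2s2s1} with $\min\{s_1,s_2\}=s_1$, and the reverse inclusion from \eqref{e:HkinMH-k}--\eqref{e:HkinMH-s} applied with $t=s_2$, $s=s_1$ together with the $H^{s_1}$--$H^{-s_1}$ duality. The paper merely phrases your product-estimate-plus-duality step as extending the trilinear bound $|\<fg,h\>_\paD|\lesssim \|f\|_{H^{s_2}}\|g\|_{H^{s_1}}\|h\|_{H^{-s_1}}$ by continuity in $h$ and then reading $h$ as the multiplier, which is the same computation.
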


\begin{proof}
Due to \eqref{e:Mds1s2s2s1}, it is enough to prove 
$H^{-s_1}  \imb \MM [H^{s_1} \sto H^{-s_2} ]$.
By Definition \ref{d:MMs1s2}, formula \eqref{e:HkinMH-s} with $t=s_2$ and $s=s_1$
can be written for $ f \in H^{s_2} $ as 
\begin{gather*} 
|\<  f g , h \>_\paD | \lesssim  \| f \|_{H^{s_2}} \| g \|_{H^{s_1}}\| h \|_{H^{-s_1}}  , \quad \text{  $g \in H^k $, $ h  \in H^k $.}
\end{gather*}
If $f \in H^k$, this estimate can be extended by continuity to all $h \in H^{-s_1} $, i.e.,
\begin{gather*} 
	|\<   h , f g \>_\paD | 
	 \lesssim \| h \|_{H^{-s_1}} \| f \|_{H^{s_2} } \| g \|_{H^{s_1} }  , \qquad \text{ $h \in H^{-s_1} $, $f \in H^k $, $ g  \in H^k $.}
\end{gather*}
Definition  \ref{d:MMs1s2} applied to the multiplication on $h$  implies $H^{-s_1}  \imb \MM [H^{s_1} \sto H^{-s_2} ]$.
 \end{proof}
 

Consider now the case where $\max \{|s_1|,|s_2| \} \le \frac{d-1}{2}$. 

\begin{thm} \label{t:H-sStar}
Suppose \eqref{a:Ck-11}. Let $ |s_1| \le s_2 \le \frac{d-1}{2}$ 
 and  $-k \le s_* < s_1+s_2 - \frac{d-1}{2}$.
Then 
$
H^{-s_*}  (\paD) \imb \MM^{\Sf_\infty} [H^{s_1}  (\paD)  \sto H^{-s_2} (\paD) ] =  \MM^{\Sf_\infty} [H^{s_2} (\paD)  \sto H^{-s_1} (\paD)  ] 
$.
\end{thm}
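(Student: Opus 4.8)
The equality $\MM^{\Sf_\infty}[H^{s_1}(\paD)\sto H^{-s_2}(\paD)] = \MM^{\Sf_\infty}[H^{s_2}(\paD)\sto H^{-s_1}(\paD)]$ is the symmetry of the multiplier spaces already recorded after Definition \ref{d:MMs1s2} (it comes from $(\mul_\vphi^{H^{s_1}\sto H^{-s_2}})^\cross = \mul_{\ove\vphi}^{H^{s_2}\sto H^{-s_1}}$), so the whole task reduces to the single embedding $H^{-s_*}(\paD)\imb \MM^{\Sf_\infty}[H^{s_1}(\paD)\sto H^{-s_2}(\paD)]$. Conceptually this is the dual form of a Sobolev product estimate $H^{s_1}\cdot H^{s_2}\imb H^{s_*}$: for $\vphi\in H^{-s_*}$ one has $|\<\vphi,\ove g_1 g_2\>_\paD|\le \|\vphi\|_{H^{-s_*}}\|\ove g_1 g_2\|_{H^{s_*}}$, so a multiplication bound $\|\ove g_1 g_2\|_{H^{s_*}}\lesssim \|g_1\|_{H^{s_1}}\|g_2\|_{H^{s_2}}$ immediately places $\vphi$ in $\MM[H^{s_1}\sto H^{-s_2}]$. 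The strict gap $s_*<s_1+s_2-(d-1)/2$ is exactly the room needed, first for such a product estimate and afterwards for upgrading boundedness to compactness.

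Rather than invoke an external multiplication theorem, I would obtain the boundedness step purely from results already in the paper, namely Theorem \ref{t:H-s} together with the interpolation embedding \eqref{e:MInt}. The plan is to manufacture an auxiliary exponent pair $(\sigma_1,\sigma_2)$ on the large-regularity range to which Theorem \ref{t:H-s} applies, and from which \eqref{e:MInt} slides down to $(s_1,s_2)$. Concretely I would require $\sigma_1+\sigma_2=s_1+s_2$, $\sigma_1\le s_1\le \sigma_2$, $(d-1)/2<\sigma_2\le k$, $|\sigma_1|\le\sigma_2$, and $\sigma_1\ge s_*$. Writing $\sigma_2=s_1+s_2-\sigma_1$, these amount to choosing $\sigma_1\in[\max\{s_*,\,s_1+s_2-k\},\ s_1+s_2-(d-1)/2)$, an interval that is nonempty precisely because $s_*<s_1+s_2-(d-1)/2$ and $k>(d-1)/2$; moreover $|\sigma_1|\le\sigma_2$ reduces to $s_1+s_2\ge0$, which holds since $|s_1|\le s_2$. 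For such a pair Theorem \ref{t:H-s} gives $H^{-\sigma_1}=\MM[H^{\sigma_1}\sto H^{-\sigma_2}]$; the interpolation \eqref{e:MInt}, whose exponent segment keeps the sum $\sigma_1+\sigma_2$ fixed and passes through $(s_1,s_2)$, yields $\MM[H^{\sigma_1}\sto H^{-\sigma_2}]\imb\MM[H^{s_1}\sto H^{-s_2}]$; and $\sigma_1\ge s_*$ gives $H^{-s_*}\imb H^{-\sigma_1}$. Chaining these produces the boundedness embedding $H^{-s_*}\imb\MM[H^{s_1}\sto H^{-s_2}]$.

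For the compactness upgrade I would exploit the strictness once more. Since $s_1+s_2>0$ and $s_*<s_1+s_2-(d-1)/2$, I can fix a sum $\tau'$ with $\max\{0,\,s_*+(d-1)/2\}<\tau'<s_1+s_2$ and split it as $\rho_1=s_1-\tfrac12(s_1+s_2-\tau')$, $\rho_2=s_2-\tfrac12(s_1+s_2-\tau')$, so that $\rho_1\le s_1$, $\rho_2\le s_2\le(d-1)/2$, $\rho_1+\rho_2=\tau'<s_1+s_2$, $|\rho_1|\le\rho_2$, and still $s_*<\rho_1+\rho_2-(d-1)/2$. The boundedness step of the previous paragraph, applied to $(\rho_1,\rho_2)$ in place of $(s_1,s_2)$, gives $H^{-s_*}\imb\MM[H^{\rho_1}\sto H^{-\rho_2}]$, and then the compactness inclusion \eqref{e:MinMSinf} (with $t_1=s_1$, $t_2=s_2$, using $\rho_1\le s_1$, $\rho_2\le s_2$, $\rho_1+\rho_2<s_1+s_2$) yields $\MM[H^{\rho_1}\sto H^{-\rho_2}]\imb\MM^{\Sf_\infty}[H^{s_1}\sto H^{-s_2}]$, completing the proof.

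I expect the main obstacle to be precisely this final compactness upgrade and the bookkeeping of the perturbed exponents: the sum $\rho_1+\rho_2$ must be kept simultaneously strictly below $s_1+s_2$ (to trigger \eqref{e:MinMSinf}), strictly above $s_*+(d-1)/2$ (to keep the boundedness step valid), and nonnegative (so that $|\rho_1|\le\rho_2$ and the underlying product is meaningful). These three constraints are jointly satisfiable only when $s_1+s_2>0$; the boundedness embedding itself survives down to $s_1+s_2=0$, but compactness genuinely requires $s_1+s_2>0$, since for $s_1+s_2=0$ the multiplier associated with the constant $\one\in H^{-s_*}$ is the non-compact canonical map $H^{s_1}\to H^{-s_2}$. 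Checking that \eqref{e:MInt} indeed traverses the pair $(s_1,s_2)$, and that all exponents remain in $[-k,k]$, where the $C^{k-1,1}$-charts keep the Sobolev scale well behaved, are the remaining routine points.
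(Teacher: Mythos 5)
Your argument is correct and, for the boundedness step, is essentially the paper's own proof: the paper simply makes a concrete choice from your admissible interval for the auxiliary pair, namely $t_1=s_1+s_2-\frac{d-1}{2}-\ep$ and $t_2=\frac{d-1}{2}+\ep$ with $\ep>0$ so small that $s_*<t_1$ and $t_2\le k$, then applies Theorem \ref{t:H-s} to get $H^{-t_1}=\MM[H^{t_1}\sto H^{-t_2}]$ and slides down to $(s_1,s_2)$ with \eqref{e:MInt}. Where you genuinely improve on the paper is the compactness upgrade. The paper's proof ends with the single assertion that $s_*<t_1$ together with $H^{-s_*}\imb\imb H^{-t_1}$ ``gives'' $H^{-s_*}\imb\MM^{\Sf_\infty}[H^{s_1}\sto H^{-s_2}]$; but compactness of the embedding between the symbol spaces does not by itself make each individual multiplier operator compact, so the missing mechanism is exactly the one you supply: rerun the boundedness step for a pair $(\rho_1,\rho_2)$ with $\rho_1\le s_1$, $\rho_2\le s_2$ and $\rho_1+\rho_2$ strictly between $s_*+\frac{d-1}{2}$ and $s_1+s_2$, and then invoke \eqref{e:MinMSinf}. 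Your closing observation is also a genuine catch rather than a routine remark: the hypotheses $|s_1|\le s_2\le\frac{d-1}{2}$ and $-k\le s_*<s_1+s_2-\frac{d-1}{2}$ do admit $s_1=-s_2$, and in that case $\one\in H^{-s_*}$ produces the non-compact canonical map $H^{s_1}\to H^{-s_2}$, so the stated conclusion (and hence the paper's proof) fails in that degenerate case. The theorem should carry the additional assumption $s_1+s_2>0$ (harmless for the applications in Sections \ref{s:2d} and \ref{s:RandAcOp}, where the exponent sum is always positive), and with that proviso your proof is complete.
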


\begin{proof}
One can take $t_1 :=s_1+s_2 - \frac{d-1}{2} - \ep$ and $t_2 :=
\frac{d-1}{2} + \ep $ with sufficiently small $\ep>0$ such that 
$s_* < t_1$ and 
$  - \frac{d-1}{2} - \ep  \le  t_1 < s_1 \le s_2  < t_2 = \frac{d-1}{2} + \ep \le k$.

Note that $t_1+t_2 = s_1 +s_2$ and that, by Theorem \ref{t:H-s}, we have 
\[
H^{-t_1}  = \MM [H^{t_1}  \sto H^{-t_2}  ] = \MM [H^{t_2}  \sto H^{-t_1} ] .
\] 
Formula  \eqref{e:MInt}  for an appropriate $\vth \in (0,1)$ 
yields
\[
H^{-t_1}  \imb \MM [H^{s_1}  \sto H^{-s_2}  ] = \MM [H^{s_2}  \sto H^{-s_1}  ].
\]
Since $s_* < t_1$ and $H^{-s_*}  \imb \imb H^{-t_1} $, we get $H^{-s_*} (\paD) \imb \MM^{\Sf_\infty} [H^{s_1}  \sto H^{-s_2} ]$.
 \end{proof}
 
In the critical case $|s_1| \le s_2 = \frac{d-1}{2}$, embedding \eqref{e:Mds1s2s2s1} and Theorem \ref{t:H-sStar} 
imply
\begin{gather} \label{e:(d-1)/2}
\bigcup_{t< s_1 } H^{-t}  \subseteq \MM^{\Sf_\infty} [H^{s_1}  \sto H^{-\frac{d-1}{2}} ] \imb \MM [H^{s_1}  \sto H^{-\frac{d-1}{2}} ] \imb H^{-s_1} .
\end{gather}
For $d=2$ and $s_1  = 1/2$, this critical case is crucial for Sections \ref{s:2d} and \ref{s:RandAcOp}.

\subsection{Generalized functions as impedance coefficient}
\label{s:GenFImp}

We assume in this subsection that  \eqref{a:Ck-11} is satisfied and that $s,s_1,s_2 \in [-k,k]$.
As it was mentioned in the previous subsection, the shortened notation $H^{s} $ is often used for $H^{ s} (\paD) $. 

Let $\vphi \in \MM [H^{s_1}   \sto H^{-s_2}  ]$.
Similarly to Definition \ref{d:wtM}, we introduce the restricted generalized multiplication operator  $\mul_\vphi^{s_1, -s_2}$ by 
\begin{gather}
  \mul_\vphi^{s_1, -s_2} : \dom \mul_\vphi^{s_1,-s_2} \subseteq H^{1/2}  \to H^{-1/2} , \label{e:Mphi1}
\\
\quad \dom   \mul_\vphi^{s_1,-s_2} := \{ u \in H^{1/2}  \cap H^{s_1}  : \ \mul_\vphi^{H^{s_1}  \sto H^{-s_2} } u \in  H^{-1/2} \cap H^{-s_2}   \}, \label{e:Mphi2}  \\
  \mul_\vphi^{s_1, -s_2} u := \mul_\vphi^{H^{s_1} \shortto H^{-s_2}} u  \quad \text{ for all } u \in \dom   \mul_\vphi^{s_1,-s_2}, \label{e:Mphi3}
\end{gather}
where $\mul_\vphi^{H^{s_1} \shortto H^{-s_2}} \in \Lc(H^{s_1},H^{-s_2})$
 is  the multiplication operator of  Definition  \ref{d:MMs1s2}.
 
 Let $\zeta \in H^{-1/2} $.
Theorem \ref{t:H-s} implies that
$
\zeta \in  \MM [H^k  \sto H^{-1/2}  ]  = \MM [H^{1/2}  \sto H^{-k}  ].
$
Hence, formulae \eqref{e:Mphi1}-\eqref{e:Mphi3} define 
the operators  
 $    \mul_\zeta^{k, -1/2} $ and $\mul_\zeta^{1/2, -k}$.

We say that a generalized function $\psi  \in H^{-k} $  is nonnegative and write $\psi \succeq 0$ 
if $\<\psi,g\>_\paD \ge 0$ for all nonnegative $g \in H^k (\paD)$. 
 
Assumption \eqref{a:Ck-11} yields the continuous embedding $H^k \imb C (\paD)$, where 
$C (\paD) $ is the space of $\CC$-valued continuous functions. Hence, every complex Borel measure $\mu$  on $\paD$ with a finite norm $\| \mu \|_{(C (\paD))'}$ generates a unique generalized function  $\vphi_\mu \in H^{-k}$ by the rule $\<\vphi_\mu,g\> = \int_\paD \ove g \ \dd \mu$. We say that $\vphi \in H^{-k}$ is a measure if $\vphi$ is of the form $\vphi_{\mu}$ for a certain $ \mu \in (C (\paD))'$. In this case, $\vphi = \vphi_\mu$ is said to be a nonnegative measure if $\mu$ is a finite  nonnegative Borel measure on $\paD$. 

\begin{thm} \label{t:InterD}
Suppose \eqref{a:Ck-11}. Let $\zeta \in H^{-1/2} (\paD)$.
Then  the following statements are equivalent:
\begin{itemize}
\item[(i)] $\re \zeta$ is a nonnegative measure; 
\item[(ii)] $\re \zeta \succeq 0$ (i.e., $\re \zeta$ is nonnegative as a generalized function);
\item[(iii)] the operator $\mul_{\zeta}^{k, -1/2}$ is accretive;
\item[(iv)] there exists a restriction $\wt \Zc$ of $\mul_\zeta^{1/2, -k}$ such that 
		the acoustic operator $\A_{\wt \Zc} $ is m-dissipative.
\end{itemize}
 \end{thm}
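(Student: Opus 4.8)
The plan is to prove the cycle (ii) $\Rightarrow$ (iii) $\Rightarrow$ (i) $\Rightarrow$ (ii) together with (iii) $\Leftrightarrow$ (iv), throughout reducing every condition to the nonnegativity of the single real generalized function $\re\zeta$. First I would record the structure of the two operators. Since $\zeta \in H^{-1/2}(\paD)$, Theorem \ref{t:H-s} (with $s_1=1/2$, $s_2=k$, using $(d-1)/2<k$) gives $\zeta \in \MM[H^k \sto H^{-1/2}] = \MM[H^{1/2}\sto H^{-k}]$, so both $\mul_\zeta^{k,-1/2}$ and $\mul_\zeta^{1/2,-k}$ are defined via \eqref{e:Mphi1}--\eqref{e:Mphi3}. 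Inspecting the domain \eqref{e:Mphi2} shows $\dom\mul_\zeta^{k,-1/2}=H^k$ (as $H^k\imb H^{1/2}$ and the range already sits in $H^{-1/2}$), hence $\mul_\zeta^{k,-1/2}$ is densely defined in $H^{1/2}$ and $\<\mul_\zeta^{k,-1/2}u,u\>_\paD=\<\zeta,|u|^2\>_\paD$ for $u\in H^k$. Because $|u|^2$ is real-valued, $\re\<\zeta,|u|^2\>_\paD=\<\re\zeta,|u|^2\>_\paD$, so (iii) is precisely $\<\re\zeta,|u|^2\>_\paD\ge 0$ for all $u\in H^k$, whereas (ii) says $\<\re\zeta,g\>_\paD\ge 0$ for all nonnegative $g\in H^k$. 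The implication (ii) $\Rightarrow$ (iii) is then immediate, since $|u|^2=u\ove u$ lies in the Banach algebra $H^k$ (Proposition \ref{p:HkBAl}) and is nonnegative.

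The converse (iii) $\Rightarrow$ (ii) is the step I expect to be the main obstacle. Given a nonnegative $g\in H^k$, I would approximate it by $g+\ep\one$ with $\ep\downarrow 0$; here $\one\in H^k$, so $g+\ep\one\to g$ in $H^k$, and $g\mapsto\<\re\zeta,g\>_\paD$ is continuous on $H^k$ as $\re\zeta\in H^{-1/2}$. Each $g+\ep\one$ is bounded below by $\ep>0$ and lies in $H^k\imb C(\paD)$, so it suffices to know that the square root of a strictly positive $H^k$-function is again in $H^k$: then $g+\ep\one=|\sqrt{g+\ep\one}|^2$ and (iii) yields $\<\re\zeta,g+\ep\one\>_\paD\ge 0$, whence $\<\re\zeta,g\>_\paD\ge 0$ in the limit. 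The stability of $H^k$ under composition with the smooth map $t\mapsto\sqrt t$ on a compact subinterval of $(0,\infty)$ is a superposition (Faà di Bruno) statement for the Banach algebra $H^k(\paD)$: the derivatives of $\sqrt{g+\ep\one}$ up to order $k$ are algebraic expressions in the derivatives of $g$ of order $\le k$ times bounded factors $F^{(j)}(g+\ep)$, and the algebra property together with $H^k\imb C(\paD)$ keeps them in $L^2$. This is where the $C^{k-1,1}$-regularity and $k>(d-1)/2$ are genuinely used, via localization and pullback to $\RR^{d-1}$ as in Proposition \ref{p:HkBAl}.

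Next, (ii) $\Leftrightarrow$ (i) is the representation of a positive generalized function by a measure. Assuming (ii), the functional $\Lambda(g):=\<\re\zeta,g\>_\paD$ is positive on nonnegative $H^k$-functions; testing the nonnegative functions $\|g\|_{C(\paD)}\one\pm g$ gives $|\Lambda(g)|\le\Lambda(\one)\,\|g\|_{C(\paD)}$, so $\Lambda$ is $C(\paD)$-continuous on the dense subspace $H^k\imb C(\paD)$ and extends to a positive functional on $C(\paD)$. The Riesz--Markov theorem then produces a finite nonnegative Borel measure $\mu$ with $\<\re\zeta,g\>_\paD=\int_\paD g\,\dd\mu$ for real $g\in H^k$, i.e. $\re\zeta=\vphi_\mu$, which is (i). The direction (i) $\Rightarrow$ (ii) is trivial, since $\int_\paD g\,\dd\mu\ge 0$ for nonnegative $g$.

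Finally, (iii) $\Leftrightarrow$ (iv) I would obtain exactly as in the proof of Theorem \ref{t:Inter}, substituting the Sobolev multipliers $\mul_\zeta$ for the pointwise multipliers $\mulp_\imp$. The required facts are the distributional analogues of Lemma \ref{l:Mf12-s}: the operators $\mul_\zeta^{k,-1/2}$, $\mul_{\ove\zeta}^{k,-1/2}$ are densely defined, $(\mul_{\ove\zeta}^{k,-1/2})^\cross=\mul_\zeta^{1/2,-k}$, and $\mul_\zeta^{1/2,-k}$ is closed; all of these follow from the $\cross$-adjoint formula $(\mul_\vphi^{H^{s_1}\sto H^{-s_2}})^\cross=\mul_{\ove\vphi}^{H^{s_2}\sto H^{-s_1}}$ recorded after Definition \ref{d:MMs1s2}. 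For (iii) $\Rightarrow$ (iv), condition (iii) makes both $\mul_\zeta^{k,-1/2}$ and (using $\re\ove\zeta=\re\zeta$) $\mul_{\ove\zeta}^{k,-1/2}$ accretive and densely defined; Proposition \ref{p:tPh59} applied to $\mul_{\ove\zeta}^{k,-1/2}$ yields a densely defined maximal accretive restriction $\wt\Zc$ of $(\mul_{\ove\zeta}^{k,-1/2})^\cross=\mul_\zeta^{1/2,-k}$, and the part (iv) $\Rightarrow$ (i) of Theorem \ref{t:DtN-GIBC} makes $\A_{\wt\Zc}$ m-dissipative. For (iv) $\Rightarrow$ (iii), the part (i) $\Rightarrow$ (iv) of Theorem \ref{t:DtN-GIBC} makes the given $\wt\Zc$ maximal accretive and densely defined, Proposition \ref{p:tPh59} makes $\wt\Zc^\cross$ maximal accretive, and since $\wt\Zc\subseteq\mul_\zeta^{1/2,-k}$ forces $\mul_{\ove\zeta}^{k,-1/2}\subseteq(\mul_\zeta^{1/2,-k})^\cross\subseteq\wt\Zc^\cross$, the operator $\mul_{\ove\zeta}^{k,-1/2}$ is accretive, hence so is $\mul_\zeta^{k,-1/2}$, which is (iii).
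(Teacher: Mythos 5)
Your proposal is correct and follows essentially the same route as the paper: the equivalence (ii) $\Leftrightarrow$ (iii) via the square-root approximation $f_n=(g+\tfrac1n)^{1/2}$ in the Banach algebra $H^k(\paD)$ is exactly Lemma \ref{l:M>0}, the Riesz--Markov representation of the positive functional is Lemma \ref{l:meas}, and your (iii) $\Leftrightarrow$ (iv) argument via Proposition \ref{p:tPh59}, Theorem \ref{t:DtN-GIBC}, and the $\cross$-adjoint identities of Lemma \ref{l:Mpsi*} is precisely the adaptation of the proof of Theorem \ref{t:Inter} that the paper invokes. You merely spell out details the paper delegates to those lemmas.
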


This theorem is proved in  Section \ref{ss:LemDistr}. 

\begin{thm} \label{t:Hacc}
Suppose \eqref{a:Ck-11}. Assume that $\zeta \in H^{-1/2} (\paD)$ and $\re \zeta \succeq 0$.
Then the following statements are equivalent: 
		\begin{itemize}
		\item[(i)] the operator $\mul_\zeta^{1/2,-k}$ is accretive;
		\item[(ii)] $\mul_\zeta^{k,-1/2}$ is closable and $\ove{\mul_\zeta^{k,-1/2}} = \mul_\zeta^{1/2,-k}$;
				\item[(iii)] the acoustic operator $\A_\Zc$ with $\Zc =  \mul_\zeta^{1/2,-k}$ is m-dissipative.
		\item[(iv)] the acoustic operator $\A_\Yc$ with $\Yc =  \mul_\zeta^{k,-1/2}$ is essentially m-dissipative.
		\end{itemize}
\end{thm}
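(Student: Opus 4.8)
The plan is to obtain Theorem~\ref{t:Hacc} as the distributional counterpart of Theorem~\ref{t:Macc}, transcribing the argument of Section~\ref{ss:PrT:1/2maxdis} into the Sobolev-multiplier framework of Section~\ref{s:MultGen}. The starting observation is that, since $\zeta \in H^{-1/2}(\paD)$ and $k>(d-1)/2$, Theorem~\ref{t:H-s} (with $s_1=1/2$, $s_2=k$) yields $\zeta \in \MM[H^{1/2}\sto H^{-k}] = \MM[H^{k}\sto H^{-1/2}]$, so both restricted operators $\mul_\zeta^{k,-1/2}$ and $\mul_\zeta^{1/2,-k}$ of \eqref{e:Mphi1}--\eqref{e:Mphi3} are defined, and the same holds for $\ove\zeta$. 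Moreover $\zeta\succeq 0$ gives $\ove\zeta\succeq 0$ as well (indeed $\<\ove\zeta,g\>_\paD=\<\zeta,g\>_\paD$ for real $g\in H^k$, so $\zeta$ is real), and Theorem~\ref{t:InterD}, part (ii)$\,\Rightarrow\,$(iii), shows that both $\mul_\zeta^{k,-1/2}$ and $\mul_{\ove\zeta}^{k,-1/2}$ are accretive under the standing hypothesis.

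First I would establish the exact analogue of Lemma~\ref{l:Mf12-s}. Here $\dom\mul_\zeta^{k,-1/2}=H^k$ (since $\mul_\zeta^{H^k\sto H^{-1/2}}$ already lands in $H^{-1/2}$), which is dense in $H^{1/2}$; on $H^k$ both realizations coincide with the distributional product $\zeta u$, so $\mul_\zeta^{k,-1/2}$ is a densely defined restriction of $\mul_\zeta^{1/2,-k}$. The identity $(\mul_{\ove\zeta}^{H^k\sto H^{-1/2}})^\cross=\mul_\zeta^{H^{1/2}\sto H^{-k}}$ from Section~\ref{s:MultGen}, fed through the domain description \eqref{e:Mphi2}, gives $(\mul_{\ove\zeta}^{k,-1/2})^\cross=\mul_\zeta^{1/2,-k}$ (whence $\mul_\zeta^{1/2,-k}$ is closed) and, by taking a second $\cross$-adjoint of this densely defined pair, $\ove{\mul_\zeta^{k,-1/2}}=(\mul_{\ove\zeta}^{1/2,-k})^\cross$. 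These steps are verbatim copies of the proof of Lemma~\ref{l:Mf12-s} with $\mulp_f^{s,\cdot}$ replaced by $\mul_\zeta^{k,\cdot}$, Theorem~\ref{t:H-s} supplying the multiplier membership that was there given by hypothesis.

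With this in hand, the four equivalences mirror the proof of Theorem~\ref{t:Macc} step for step, invoking the resulting analogue of Lemma~\ref{l:Mf12-sAcc}. For (i)$\,\Rightarrow\,$(ii): if $\mul_\zeta^{1/2,-k}$ is accretive, then Theorem~\ref{t:DtN-GIBC}, part (iii)$\,\Leftrightarrow\,$(iv), applied to the $\cross$-adjoint pair $\bigl(\mul_{\ove\zeta}^{k,-1/2},\,\mul_\zeta^{1/2,-k}\bigr)$ makes $\mul_\zeta^{1/2,-k}$ maximal accretive, densely defined, and closed, and forces $\ove{\mul_\zeta^{k,-1/2}}=\mul_\zeta^{1/2,-k}$. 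The converse (ii)$\,\Rightarrow\,$(i) is immediate since the closure of the accretive $\mul_\zeta^{k,-1/2}$ is accretive. For (i)$\,\Rightarrow\,$(iii) I would apply Theorem~\ref{t:DtN-GIBC}, part (i)$\,\Leftrightarrow\,$(iv), to $\Zc=\mul_\zeta^{1/2,-k}$; for (i)$\,\Rightarrow\,$(iv), Proposition~\ref{p:EssMD} applied to the closable accretive $\Yc=\mul_\zeta^{k,-1/2}$ with $\ove\Yc=\mul_\zeta^{1/2,-k}$. Conversely (iii)$\,\Rightarrow\,$(i), since m-dissipativity of $\A_\Zc$ forces $\Zc=\mul_\zeta^{1/2,-k}$ to be maximal accretive by Theorem~\ref{t:DtN-GIBC}; and (iv)$\,\Rightarrow\,$(i) by identifying $\ove\Yc=(\mul_{\ove\zeta}^{1/2,-k})^\cross$ via Proposition~\ref{p:EssMD} and the previous paragraph, so that $\mul_{\ove\zeta}^{1/2,-k}=(\ove\Yc)^\cross$ is accretive, whence $\mul_\zeta^{1/2,-k}$ is accretive by the Lemma-analogue and Theorem~\ref{t:DtN-GIBC}.

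The main obstacle is not in the dissipativity theory: Propositions~\ref{p:tPh59} and \ref{p:EssMD} and Theorem~\ref{t:DtN-GIBC} operate purely on operators $H^{1/2}(\paD)\to H^{-1/2}(\paD)$ and are insensitive to the multiplier origin of $\Zc$. The delicate part is the domain and $\cross$-adjoint bookkeeping for the two distributional realizations $\mul_\zeta^{k,-1/2}$ and $\mul_\zeta^{1/2,-k}$: one must verify that they share the core $H^k$ and are genuinely closure/$\cross$-adjoint related, so that the restriction and adjoint relations are exact rather than merely up to a proper extension. This rests entirely on the identity $H^{-1/2}=\MM[H^{1/2}\sto H^{-k}]=\MM[H^k\sto H^{-1/2}]$ of Theorem~\ref{t:H-s}. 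I expect the only point demanding care is keeping the two realizations consistent on $H^k$; note that the critical-index compactness refinement of Theorem~\ref{t:H-sStar} is \emph{not} needed here, since m-dissipativity uses only the bounded (non-compact) membership.
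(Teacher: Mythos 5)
Your proposal is correct and follows essentially the paper's own route: the paper proves Theorem \ref{t:Hacc} precisely by combining Theorem \ref{t:InterD} with Lemmas \ref{l:Mpsi*} and \ref{l:k-1/2Acc} (the distributional analogues of Lemmas \ref{l:Mf12-s} and \ref{l:Mf12-sAcc} that you re-derive, with Theorem \ref{t:H-s} supplying the multiplier membership) and then repeating the argument of Theorem \ref{t:Macc} verbatim. Your identification of the $\cross$-adjoint/closure bookkeeping on the core $H^k(\paD)$ as the only delicate point matches the paper's treatment exactly.
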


This theorem is proved in  Section \ref{ss:LemDistr}.

For $T \in \Lc (H^s, H^{-s}) $,  the real and imaginary parts
\[ \textstyle
\re T := \frac{1}{2} (T+T^\cross) \quad  \text{ and } \quad \im T := \frac{1}{2\ii} (T-T^\cross) , \ 
\]
  are also in  $ \Lc (H^s, H^{-s}) $.
Let $\vphi \in \MM [H^s  \sto H^{-s}  ] $ with $s \in [0,k]$.
Then $\ove \vphi $ and $\re \vphi$ also belong to $\MM [H^s \sto H^{-s}  ] $. The operator  $\mul_\vphi^{H^s \sto H^{-s} } $
has the $\cross$-adjoint $\mul_{\ove \vphi}^{H^s \sto H^{-s} } \in \Lc (H^s, H^{-s}) $. This implies 
\begin{gather} \label{e:reMul=re}
	\re \mul_\vphi^{H^{s} \sto H^{-s} } = \mul_{\re \vphi}^{H^{s} \sto H^{-s} } \qquad \text{ for all $\vphi \in \MM [H^{s} (\paD) \sto H^{-s} (\paD) ]$}.
\end{gather}

\begin{rem} \label{r:ReM>0}
Let us assume \eqref{a:Ck-11} and consider the case $\zeta \in \MM [H^{1/2}  \sto H^{-1/2}  ]$. Then 
\[
\text{$\Zc = \mulp_\zeta^{1/2,-k}$ coincides with the bounded operator $\ove{ \mul_\zeta^{k,-1/2}} = \mul_\zeta^{H^{1/2} \sto H^{-1/2}} $.}
\]
Hence, the operator $\Zc$ is accretive if and only if 
\begin{gather} \label{e:ReZ>0}
\text{the operator $\re \mul_\zeta^{H^{1/2} \sto H^{-1/2} }$ is nonnegative}.
\end{gather}
So, condition \eqref{e:ReZ>0} is equivalent to each of the conditions (i)-(iii) of Theorem \ref{t:InterD}. In particular, \eqref{e:ReZ>0} is equivalent to $\re \zeta \succeq 0$.
If the condition $\re \zeta \succeq 0$ is satisfied, Theorem \ref{t:Hacc} implies that the acoustic operator $\A_\Zc$ is m-dissipative.
It follows from \eqref{e:reMul=re} that $\A_\Zc = \A_\Zc^*$ if and only if $\re \zeta = 0$ (in the sense of $H^{-k}$).
\end{rem}

\subsection{Multiplicative positivity and  proofs of  Theorems \ref{t:InterD} and \ref{t:Hacc}.}\label{ss:LemDistr}

In this section we suppose \eqref{a:Ck-11}.
The next lemma gives the equivalence of positivity and multiplicative positivity for $\vphi \in H^k$ (cf. \cite{GV4} for the case of distributions on $\RR^n$).

	\begin{lem}	\label{l:M>0}
		Assume \eqref{a:Ck-11}. Let $\vphi \in \MM [H^s (\paD) \sto H^{-s} (\paD) ]$ with $s \in [0,k]$. Then the operator $\mul_\vphi^{H^s \sto H^{-s} }$ is  nonnegative if and only if $\vphi \succeq 0$.
	\end{lem}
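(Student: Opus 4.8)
The plan is to reduce the assertion to a pointwise positivity statement about $\vphi$ and then to realize nonnegative test functions as squares. Writing $T := \mul_\vphi^{H^s \sto H^{-s}} \in \Lc(H^s(\paD), H^{-s}(\paD))$, Definition \ref{d:MMs1s2} gives $\<T g, g\>_\paD = \<\vphi, \ove{g} g\>_\paD = \<\vphi, |g|^2\>_\paD$ for every $g \in H^k(\paD)$. Since $T$ is bounded and $H^k(\paD)$ is dense in $H^s(\paD)$, the form $g \mapsto \<T g, g\>_\paD$ is continuous on $H^s(\paD)$, so (with the notion of nonnegativity for the pair $H^s, H^{-s}$ recalled in Section \ref{s:MultGen}) the operator $T$ is nonnegative if and only if $\<\vphi, |g|^2\>_\paD \ge 0$ for all $g \in H^k(\paD)$. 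Thus the lemma amounts to the equivalence of this condition with $\vphi \succeq 0$, i.e. with $\<\vphi, h\>_\paD \ge 0$ for every nonnegative $h \in H^k(\paD)$.

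One implication is immediate. If $\vphi \succeq 0$ and $g \in H^k(\paD)$, then, because complex conjugation preserves $H^k(\paD)$ and this space is a Banach algebra by Proposition \ref{p:HkBAl}, the function $|g|^2 = g\ove g$ lies in $H^k(\paD)$ and is nonnegative; hence $\<\vphi, |g|^2\>_\paD \ge 0$. This uses only \eqref{e:BAlg}.

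For the converse, which is the substantial direction, I would fix a nonnegative $h \in H^k(\paD)$ and approximate it by manifest squares of $H^k$-functions. By the embedding $H^k \imb C(\paD)$ from assumption \eqref{a:Ck-11}, $h$ is continuous with $0 \le h \le M := \|h\|_{C(\paD)}$. For $\epsilon>0$ the function $h + \epsilon\one$ lies in $H^k(\paD)$ (recall $\one \in H^k$) and takes values in the compact interval $[\epsilon, M+\epsilon] \subset (0,+\infty)$, on a neighborhood of which $t \mapsto \sqrt t$ is $C^\infty$. The key point is that then $g_\epsilon := \sqrt{h + \epsilon\one}$ belongs to $H^k(\paD)$: this is a superposition (Nemytskii) property of $H^k(\paD)$, established exactly as Proposition \ref{p:HkBAl} by localization and pullback, which reduces it to the classical composition estimate $\|\Phi \circ u\|_{H^k(\Nc)} \lesssim C(\Phi, \|u\|_{C(\Nc)})\,(1 + \|u\|_{H^k(\Nc)})$ for $u \in H^k(\Nc) \cap L^\infty(\Nc)$ and $\Phi \in C^\infty$ on a Euclidean chart $\Nc \subset \RR^{d-1}$, valid precisely because $k > (d-1)/2$. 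Granting this, $g_\epsilon$ is real and nonnegative with $|g_\epsilon|^2 = g_\epsilon^2 = h + \epsilon\one$, so the hypothesis gives $\<\vphi, h + \epsilon\one\>_\paD = \<\vphi, |g_\epsilon|^2\>_\paD \ge 0$. Finally $h + \epsilon\one \to h$ in $H^k(\paD)$ as $\epsilon \to 0^+$, and $\vphi \in H^{-s}(\paD) \subseteq H^{-k}(\paD)$ by \eqref{e:Mds1s2s2s1}, so the duality pairing passes to the limit and yields $\<\vphi, h\>_\paD \ge 0$, proving $\vphi \succeq 0$.

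I expect the one nontrivial ingredient --- and the step I would write out in detail --- to be the superposition property $\sqrt{h + \epsilon\one} \in H^k(\paD)$, since the mere Banach-algebra bound of Proposition \ref{p:HkBAl} does not by itself control the $H^k$-norm of a square root (a naive binomial expansion of $\sqrt{1+w}$ need not converge in $H^k$, as its convergence would require $\|w\|_{H^k}$ small rather than $\|w\|_{C(\paD)}<1$). One genuinely needs the Moser-type chain-rule estimate on the charts, which is exactly where the threshold $k > (d-1)/2$, guaranteeing $H^k \imb L^\infty$, enters.
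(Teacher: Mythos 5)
Your proof is correct and follows essentially the same route as the paper: the same reduction to multiplicative positivity, and the same regularization $\sqrt{h+\epsilon\one}$ (the paper uses $f_n=(g+\tfrac1n)^{1/2}$) followed by passage to the limit in the $H^{k}$--$H^{-k}$ pairing. You are in fact more explicit than the paper on the one delicate point --- that $\sqrt{h+\epsilon\one}\in H^k(\paD)$ requires a Moser-type composition estimate on charts rather than just the Banach-algebra property --- which the paper simply asserts.
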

	\begin{proof}
		\emph{Step 1.} Let $\vphi \succeq 0$. Then, for all $f \in H^k$,
		$
		\<\mul_\vphi^{H^s \sto H^{-s} } f , f\>_\paD =  \< \vphi , | f|^2\>_\paD  \ge 0 .
		$
		The inequality  $\<\mul_\vphi^{H^s \sto H^{-s} } f , f\>_\paD   \ge 0$ can be extended to all $f \in H^s$ by continuity. Thus, $\mul_\vphi^{H^s \sto H^{-s} }$ is a nonnegative operator.
		
		\emph{Step 2.} Assume that $\mul_\vphi^{H^s \sto H^{-s} }$ is  nonnegative.
		This implies the following version of the multiplicative positivity property
		 for $\vphi$ (cf. \cite{GV4}):  
		\begin{gather} \label{e:MulPos}
			\< \vphi , |f|^2 \>_\paD \ge 0 \quad \text{ for all $f \in H^k$}.
		\end{gather}	
		Let $g \in H^k$ and $g \ge 0$ a.e. on $\paD$. Then $f_n = (g+\frac{1}{n})^{1/2}$, $n \in \NN$,
		is a sequence of $H^k$-functions such that $|f_n|^2 \to g$ in
		the norm of $H^k$ as $n \to \infty$.
		Applying   \eqref{e:MulPos} to $f_n$ and passing to the limit,
		one gets $\< \vphi , g \>_\paD \ge 0$. That is, 
		$\vphi \succeq 0$.
	\end{proof}

\begin{lem}	\label{l:meas}
		Assume \eqref{a:Ck-11}. Then:
		\item[(i)] The continuous embedding $H^k (\paD) \imb C (\paD)$ is dense.
		\item[(ii)] Every nonnegative $\vphi \in H^{-k} (\paD)$ is a nonnegative measure.
	\end{lem}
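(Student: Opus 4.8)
The plan is to deduce (i) from the Stone--Weierstrass theorem and then to obtain (ii) by a Riesz-representation argument driven by positivity.

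For (i), recall that the embedding $H^k (\paD) \imb C (\paD)$ is continuous under \eqref{a:Ck-11}. To upgrade it to density I would apply the complex Stone--Weierstrass theorem on the compact space $\paD$ to the subalgebra $H^k (\paD) \subseteq C (\paD)$. By Proposition \ref{p:HkBAl} this is indeed a subalgebra; it is closed under complex conjugation, since $\ove f \in H^k$ whenever $f \in H^k$; and it contains the constants because $\one \in H^k (\paD)$. It remains to check that $H^k (\paD)$ separates the points of $\paD$: under \eqref{a:Ck-11} the coordinate functions $x_j$, $1 \le j \le d$, restricted to $\paD$ lie in $C^{k-1,1} (\paD) \subseteq H^k (\paD)$, and they obviously distinguish any two distinct points of $\paD \subset \RR^d$. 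Stone--Weierstrass then yields that $H^k (\paD)$ is uniformly dense in $C (\paD)$.

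For (ii), let $\vphi \in H^{-k} (\paD)$ with $\vphi \succeq 0$, and define the linear functional $L (g) := \< \vphi , \ove g \>_\paD$ on $H^k (\paD)$. For real $g \ge 0$ one has $\ove g = g$, so $\vphi \succeq 0$ gives $L (g) \ge 0$; thus $L$ is positive on the real nonnegative cone of $H^k (\paD)$. The next step is to bound $L$ for the uniform norm. For real $g \in H^k (\paD)$ the pointwise inequalities $\pm g \le \| g \|_{C (\paD)} \one$ together with $\one \in H^k (\paD)$ and positivity give $| L (g) | \le \| g \|_{C (\paD)} \, L (\one)$; splitting a general $g \in H^k (\paD)$ into its real and imaginary parts, both of which lie in $H^k (\paD)$, yields $| L (g) | \lesssim \| g \|_{C (\paD)}$. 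By (i), $H^k (\paD)$ is dense in $C (\paD)$, so $L$ extends uniquely to a bounded linear functional $\wt L$ on $C (\paD)$.

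Finally I would verify that $\wt L$ is positive and invoke the Riesz--Markov--Kakutani theorem. Given $g \in C (\paD)$ with $g \ge 0$ and $\ep > 0$, density provides a real $h \in H^k (\paD)$ with $\| g - h \|_{C (\paD)} < \ep$; then $h + \ep \one \in H^k (\paD)$ is real and nonnegative, so $\wt L (h + \ep \one) = L (h + \ep \one) \ge 0$, and letting $\ep \to 0$ gives $\wt L (g) \ge 0$. Hence $\wt L$ is a positive bounded functional on $C (\paD)$, and there is a unique finite nonnegative Borel measure $\mu$ on $\paD$ with $\wt L (g) = \int_\paD g \ \dd \mu$ for all $g \in C (\paD)$. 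Restricting to $g \in H^k (\paD)$ and then replacing $g$ by $\ove g$ gives $\< \vphi , g \>_\paD = \int_\paD \ove g \ \dd \mu = \< \vphi_\mu , g \>_\paD$ for every $g \in H^k (\paD)$, so $\vphi = \vphi_\mu$ in $H^{-k} (\paD)$ with $\mu \ge 0$; that is, $\vphi$ is a nonnegative measure. The main obstacle I anticipate is the positivity-preserving passage from $H^k (\paD)$ to $C (\paD)$, namely approximating nonnegative continuous functions from within the nonnegative cone of $H^k (\paD)$, since plain density only supplies approximants that need not be nonnegative; adding the small multiple $\ep \one$ of the constant function is exactly what repairs this.
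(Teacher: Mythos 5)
Your argument is correct, but for part (i) it takes a genuinely different route from the paper. The paper proves density of $H^k (\paD)$ in $C (\paD)$ constructively, by a $C^{k-1,1}$-smooth partition of unity and mollification in local coordinates, adapting the $C^\infty$-manifold case of Triebel; you instead invoke the complex Stone--Weierstrass theorem, using Proposition \ref{p:HkBAl} to see that $H^k (\paD)$ is a conjugation-closed unital subalgebra of $C (\paD)$ and the restricted coordinate functions to separate points. Your route is shorter and reuses structure already established in the paper (the Banach-algebra property), at the cost of being non-constructive and specific to uniform density; the mollifier route is more flexible (it yields density of $H^k$ in other target spaces as well) but requires more care on a nonsmooth boundary. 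For part (ii) the paper only cites the classical argument for nonnegative distributions from Gel'fand--Vilenkin and asserts it adapts via (i); what you write out --- positivity forces the order bound $|L(g)| \le \|g\|_{C (\paD)} L(\one)$ for real $g$, hence sup-norm continuity, extension by density, preservation of positivity via the $\ep \one$ shift, and Riesz--Markov--Kakutani --- is precisely that argument made explicit, including the correct handling of the antilinear slot of $\<\cdot,\cdot\>_\paD$ and of the paper's convention $\<\vphi_\mu, g\>_\paD = \int_\paD \ove g \, \dd \mu$. Both parts of your proposal are sound.
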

	\begin{proof}
 The embedding $H^k  \imb C (\paD)$ follows from \eqref{a:Ck-11}	and
the definition of $H^k$ via localization. Using a suitable $C^{k-1,1}$-smooth partition of unity on $\paD$ and mollifiers (in local coordinates), one proves the density of $H^k$ in $C (\paD)$ similarly to the case of  compact $C^\infty$-manifolds \cite{T92}. This gives statement (i).

The statement analogous to (ii) is well-known for nonnegative distributions on $\RR^n$, e.g., see the proof in \cite{GV4}. This proof can be  adapted without essential changes to 
$\paD$ using statement (i).
	\end{proof}

Let $\zeta  \in H^{-1/2} $. Then $\ove \zeta  \in H^{-1/2} $ and,
by Theorem \ref{t:H-s}, 
$\zeta$ and $\ove \zeta$ belong to $\MM [H^{1/2}  \sto H^{-k}  ] = \MM [H^k  \sto H^{-1/2} ]$.
Hence, the operators 
$  \mul_\zeta^{k, -1/2} $, $  \mul_\zeta^{1/2, -k}$, $  \mul_{\ove \zeta}^{k, -1/2} $, $  \mul_{\ove \zeta}^{1/2, -k}$ can be defined according to \eqref{e:Mphi1}-\eqref{e:Mphi3}.
Now, we formulate the analogues of the lemmas of Section \ref{ss:PrT:1/2maxdis} indicating the steps where the passage from pointwise to Sobolev multipliers requires certain modifications.

\begin{lem} \label{l:Mpsi*}
(i)  $  \mul_\zeta^{k, -1/2} $ and its extension  $  \mul_\zeta^{1/2, -k}$ are densely defined.
\item[(ii)] $(  \mul_\zeta^{k, -1/2})^\cross =   \mul_{\ove \zeta}^{1/2, -k}$ and 
$\ove{\mul_\zeta^{k, -1/2}} = (\mul_{\ove \zeta}^{1/2, -k})^\cross$.
\end{lem}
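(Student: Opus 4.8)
The plan is to transcribe the pointwise argument of Lemma~\ref{l:Mf12-s}, the two places needing modification being the source of the multiplier membership and the density used throughout. First I would note that, since $\zeta \in H^{-1/2}(\paD)$, Theorem~\ref{t:H-s} applied with $s_1 = 1/2$, $s_2 = k$ gives $\zeta, \ove\zeta \in \MM[H^k \sto H^{-1/2}] = \MM[H^{1/2} \sto H^{-k}]$, so the bounded operators $\mul_\zeta^{H^k \sto H^{-1/2}}$, $\mul_{\ove\zeta}^{H^{1/2} \sto H^{-k}}$ and their conjugate counterparts are available. Reading \eqref{e:Mphi2} with $s_1 = k$, $s_2 = 1/2$, and using $H^k \subseteq H^{1/2}$ together with the fact that $\mul_\zeta^{H^k \sto H^{-1/2}}$ already maps into $H^{-1/2}$, I would identify $\dom \mul_\zeta^{k,-1/2} = H^k$, which is dense in $H^{1/2}$; this gives the density of $\mul_\zeta^{k,-1/2}$. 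For $u \in H^k$ the two multiplication operators $\mul_\zeta^{H^k \sto H^{-1/2}}$ and $\mul_\zeta^{H^{1/2} \sto H^{-k}}$ coincide (both are determined by the form $\<\zeta, \ove{g_1} g_2\>_\paD$ on the common test space $H^k$), so $H^k \subseteq \dom \mul_\zeta^{1/2,-k}$ and $\mul_\zeta^{1/2,-k}$ extends $\mul_\zeta^{k,-1/2}$; as an extension of a densely defined operator it is itself densely defined. This settles (i).

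For the first identity of (ii), I would invoke the bounded $\cross$-adjoint formula $(\mul_\zeta^{H^k \sto H^{-1/2}})^\cross = \mul_{\ove\zeta}^{H^{1/2} \sto H^{-k}}$ recorded after Definition~\ref{d:MMs1s2}. For $g \in H^k = \dom \mul_\zeta^{k,-1/2}$ and $u \in H^{1/2}$,
\[
\<\mul_\zeta^{k,-1/2} g, u\>_\paD = \<\mul_\zeta^{H^k \sto H^{-1/2}} g, u\>_\paD = \<g, \mul_{\ove\zeta}^{H^{1/2} \sto H^{-k}} u\>_\paD .
\]
By the definition of the $\cross$-adjoint, $u \in \dom (\mul_\zeta^{k,-1/2})^\cross$ if and only if $\mul_{\ove\zeta}^{H^{1/2} \sto H^{-k}} u \in H^{-1/2}$, i.e.\ (by \eqref{e:Mphi2} applied to $\ove\zeta$) if and only if $u \in \dom \mul_{\ove\zeta}^{1/2,-k}$; on this domain the two operators agree, so $(\mul_\zeta^{k,-1/2})^\cross = \mul_{\ove\zeta}^{1/2,-k}$.

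For the closure identity, I would first observe that $\mul_{\ove\zeta}^{1/2,-k}$ is densely defined: the extension argument of (i), run with $\ove\zeta$ in place of $\zeta$, shows it extends $\mul_{\ove\zeta}^{k,-1/2}$, whose domain is $H^k$. Since $\mul_\zeta^{k,-1/2}$ and its $\cross$-adjoint $\mul_{\ove\zeta}^{1/2,-k}$ are both densely defined, $\mul_\zeta^{k,-1/2}$ is closable and its closure is its second $\cross$-adjoint (Appendix~\ref{s:GenAcc}), so
\[
\ove{\mul_\zeta^{k,-1/2}} = \bigl((\mul_\zeta^{k,-1/2})^\cross\bigr)^\cross = (\mul_{\ove\zeta}^{1/2,-k})^\cross ,
\]
which is the second assertion of (ii).

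The only non-routine point is the consistency I used in both (i) and the pairing above, namely that $\mul_\zeta^{H^k \sto H^{-1/2}}$ and $\mul_\zeta^{H^{1/2} \sto H^{-k}}$ restrict to the same map on $H^k$. In the pointwise setting of Lemma~\ref{l:Mf12-s} both operators are literally multiplication by an $L^1$-function tested on $H^\infty_\DepaD$, and the agreement is immediate; here it must instead be read off the defining sesquilinear form $\<\zeta, \ove{g_1} g_2\>_\paD$ on the common dense test space $H^k(\paD)$, and it is precisely at this step that the density of $H^k(\paD)$ in the Sobolev scale (via the method of local coordinates) is indispensable. Once this is in place, everything else transcribes from Lemma~\ref{l:Mf12-s}.
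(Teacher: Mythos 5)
Your proof is correct and is exactly the argument the paper intends: its own proof of Lemma~\ref{l:Mpsi*} consists of the single remark that one should repeat the proof of Lemma~\ref{l:Mf12-s} with $H^\infty_\DepaD$ and the integrals replaced by $H^k(\paD)$ and the pairing $\<\cdot,\cdot\>_\paD$, which is precisely the transcription you carry out (including the correct use of Theorem~\ref{t:H-s} for the multiplier membership and the density of $H^k(\paD)$ in the Sobolev scale). Your explicit identification of the consistency of $\mul_\zeta^{H^k \sto H^{-1/2}}$ and $\mul_\zeta^{H^{1/2} \sto H^{-k}}$ on the common test space as the one non-routine point is a useful clarification, but it does not change the route.
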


\begin{proof}
The proof  can be obtained along the lines of the proof of Lemma \ref{l:Mf12-s}
with obvious changes due to the replacement of the dense linear subspace $H^\infty_\DepaD$ and the integrals by the space $H^k$ and the pairing $\<\cdot,\cdot\>_\paD$, respectively.
\end{proof}

\begin{lem} \label{l:k-1/2Acc}
Assume additionally that  $\mul_\zeta^{1/2,-k}$ is accretive. Then:
\item[(i)] The operators $ \mul_\zeta^{1/2,-k} $ and $\mul_{\ove \zeta}^{1/2,-k}$ are  maximally accretive  and densely defined.
\item[(ii)] $ \ove{ \mul_\zeta^{k,-1/2}} = \mul_\zeta^{1/2, -k} $
\end{lem}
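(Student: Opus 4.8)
The plan is to mirror the proof of Lemma~\ref{l:Mf12-sAcc}, replacing the integrals $\int_\paD f g_1 \ove{g}_2\,\dd\Si$ by the pairings $\<\zeta,\ove{g}_1 g_2\>_\paD$ and the dense test space $H^\infty_\DepaD$ by $H^k(\paD)$, and then feeding the $\cross$-adjoint identities of Lemma~\ref{l:Mpsi*} into the abstract criterion of Theorem~\ref{t:DtN-GIBC}. The preliminary step I would record is the conjugation symmetry of the construction \eqref{e:Mphi1}--\eqref{e:Mphi3}. Since $v\mapsto\ove{v}$ is a bijective isometry of every $H^t(\paD)$, and since a direct check on $\<\cdot,g\>_\paD$ with $g\in H^k$ gives the identity $\mul_{\ove\zeta}^{H^{1/2}\sto H^{-k}}\ove{u} = \ove{\mul_\zeta^{H^{1/2}\sto H^{-k}} u}$ in $H^{-k}$, the map $u\mapsto\ove{u}$ carries $\dom\mul_\zeta^{1/2,-k}$ bijectively onto $\dom\mul_{\ove\zeta}^{1/2,-k}$ and satisfies $\<\mul_{\ove\zeta}^{1/2,-k}\ove{u},\ove{u}\>_\paD = \ove{\<\mul_\zeta^{1/2,-k} u, u\>_\paD}$. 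Hence the assumed accretivity of $\mul_\zeta^{1/2,-k}$ transfers verbatim to $\mul_{\ove\zeta}^{1/2,-k}$, and by restriction (Lemma~\ref{l:Mpsi*}(i) gives $\mul_\zeta^{k,-1/2}\subseteq\mul_\zeta^{1/2,-k}$, and likewise for $\ove\zeta$) also to $\mul_\zeta^{k,-1/2}$ and $\mul_{\ove\zeta}^{k,-1/2}$; all four operators are densely defined by Lemma~\ref{l:Mpsi*}(i).

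For part~(i) I would invoke Lemma~\ref{l:Mpsi*}(ii), which identifies $\mul_\zeta^{1/2,-k} = (\mul_{\ove\zeta}^{k,-1/2})^\cross$ and (applying it with $\ove\zeta$ in place of $\zeta$) $\mul_{\ove\zeta}^{1/2,-k} = (\mul_\zeta^{k,-1/2})^\cross$. Thus each of $\mul_\zeta^{1/2,-k}$ and $\mul_{\ove\zeta}^{1/2,-k}$ is an accretive operator that is simultaneously the $\cross$-adjoint of a densely defined accretive operator (namely $\mul_{\ove\zeta}^{k,-1/2}$, respectively $\mul_\zeta^{k,-1/2}$). The implication (iii)$\Rightarrow$(iv) of Theorem~\ref{t:DtN-GIBC} then yields that both are maximal accretive and densely defined, which is exactly statement~(i).

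For part~(ii) I would use that $\mul_\zeta^{k,-1/2}$ is accretive and densely defined, so that by Lemma~\ref{l:Mpsi*}(ii) its closure $\ove{\mul_\zeta^{k,-1/2}} = (\mul_{\ove\zeta}^{1/2,-k})^\cross$ is accretive (as the closure of an accretive operator) and, being the $\cross$-adjoint of the densely defined accretive operator $\mul_{\ove\zeta}^{1/2,-k}$, is maximal accretive by Theorem~\ref{t:DtN-GIBC}(iii)$\Rightarrow$(iv). On the other hand, $\mul_\zeta^{1/2,-k} = (\mul_{\ove\zeta}^{k,-1/2})^\cross$ is a closed accretive extension of $\mul_\zeta^{k,-1/2}$, hence it contains $\ove{\mul_\zeta^{k,-1/2}}$. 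Since a maximal accretive operator admits no proper accretive extension, the inclusion $\ove{\mul_\zeta^{k,-1/2}}\subseteq\mul_\zeta^{1/2,-k}$ of a maximal accretive operator inside the accretive operator $\mul_\zeta^{1/2,-k}$ forces equality, giving~(ii).

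The only genuinely new point compared with Lemma~\ref{l:Mf12-sAcc} is the bookkeeping of conjugation and $\cross$-adjoints in the distributional pairing: the hard part will be verifying carefully that the domains of $\mul_\zeta^{1/2,-k}$ and $\mul_{\ove\zeta}^{1/2,-k}$ correspond under $u\mapsto\ove{u}$, so that accretivity genuinely transfers from $\zeta$ to $\ove\zeta$. This is precisely the place where the passage from pointwise multipliers (integrals against $f$) to Sobolev multipliers (the pairing $\<\cdot,\cdot\>_\paD$ against $\zeta$) must be handled with care, but it reduces to the elementary identity $\mul_{\ove\zeta}\ove{u} = \ove{\mul_\zeta u}$ together with the fact that complex conjugation preserves $H^{-1/2}(\paD)$. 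Everything else is a direct transcription of the pointwise argument through Lemma~\ref{l:Mpsi*} and Theorem~\ref{t:DtN-GIBC}.
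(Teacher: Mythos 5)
Your proposal is correct and follows essentially the same route as the paper, whose proof simply says to transcribe the argument of Lemma~\ref{l:Mf12-sAcc} using Lemma~\ref{l:Mpsi*}: restriction/conjugation to transfer accretivity, the $\cross$-adjoint identities to apply the (iii)$\Leftrightarrow$(iv) part of Theorem~\ref{t:DtN-GIBC}, and maximality to force $\ove{\mul_\zeta^{k,-1/2}}=\mul_\zeta^{1/2,-k}$. Your explicit verification of the conjugation symmetry $\mul_{\ove\zeta}\ove{u}=\ove{\mul_\zeta u}$ is exactly the detail the paper leaves implicit in the phrase ``comparison of the process of the construction,'' and it checks out.
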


\begin{proof}
Using Lemma \ref{l:Mpsi*}, one proves Lemma \ref{l:k-1/2Acc} similarly to Lemma \ref{l:Mf12-sAcc}.
\end{proof}

\begin{proof}[Proof of Theorem \ref{t:InterD}]
Since $ \zeta \in H^{-1/2} = \MM [H^{1/2} \sto H^{-k}] \imb  \MM [H^k \sto H^{-k}]$,
the bounded operator $\mul_\zeta^{H^{k} \sto H^{-k}} $
is well-defined and coincide with $\mul_\zeta^{k, -1/2}$ as a map.
Hence, the equivalences (i) $\Lra$ (ii) $\Lra$ (iii) of Theorem \ref{t:InterD} follow from Lemmas \ref{l:M>0} and  \ref{l:meas}.
Now, (iii) $\Lra$ (iv)   can be proved similarly to the proof of Theorem \ref{t:Inter}.
\end{proof}

\begin{proof}[Proof of Theorem \ref{t:Hacc}]
Using Theorem \ref{t:InterD}, Lemma \ref{l:Mpsi*}, and Lemma \ref{l:k-1/2Acc},
one proves Theorem \ref{t:Hacc} similarly to Theorem \ref{t:Macc}.
\end{proof}

\section{Friedrichs-type  extensions of nonnegative measures}
\label{s:Pos}

Let $\Zc: \dom \Zc \subseteq H^{1/2} \to H^{-1/2}$ be nonnegative and densely defined.
Then $T_\Zc := (\De_\paD+1)^{-1/4}_{ H^{-1/2} \shortto H^0 } \ \ \Zc \ \ (\De_\paD+1)^{-1/4}_{H^0 \shortto H^{1/2}}  $
is a nonnegative densely defined operator in $L^2 (\paD)$.
Let us denote by $[T_\Zc]_\Fr$ the Friedrichs extension of of $T_\Zc$ (see, e.g., \cite{Kato}).
We  define the Friedrichs-type  extension $[\Zc]_\Fr : \dom [\Zc]_\Fr \subseteq H^{1/2} \to H^{-1/2}$ of  $\Zc$ by 
\[
[\Zc]_\Fr :=  (\De_\paD+1)^{1/4}_{ H^{0} \shortto H^{-1/2} }  [T_\Zc]_\Fr (\De_\paD+1)^{1/4}_{H^{1/2} \shortto H^{0}} .
\]
Consider the acoustic operator $\A_{[\Zc]_\Fr }$ associated with the boundary condition 
\[
[\Zc]_\Fr \ \ga_0 (p) =  \gan (\vbf).
\] 
We  say that the operator $\A_{[\Zc]_\Fr }$ is \emph{an F-extension} of the acoustic operator $\A_\Zc$.

The F-extensions, as well as Krein-type extensions $[\Zc]_\Kr$ and the associated K-extensions, were introduced in \cite{EK22} in the context of Maxwell systems.
Note that it is difficult to apply classical Friedrichs extensions directly to 1st order  acoustic or Maxwell operators because they do not possess necessary sectoriality properties.

\begin{thm} \label{t:F}
Suppose \eqref{a:Ck-11}. Let $\zeta \in H^{-1/2} (\paD)$ be a nonnegative measure.
Put  $\Zc = \mul_{\zeta}^{k, -1/2}$ and $\Yc=\mul_{\zeta}^{1/2, -k}$.
Then:
\begin{itemize}
\item[(i)] The Friedrichs-type  extension $ [\Zc]_\Fr=[\mul_{\zeta}^{k, -1/2}]_\Fr$ is a nonnegative $\cross$-selfadjoint operator that is intermediate between 
$\mul_{\zeta}^{k, -1/2}$ and $\mul_{\zeta}^{1/2, -k}$ in the sense that the associated graphs satisfy $\Gr \Zc \subseteq \Gr [\Zc]_\Fr \subseteq  \Gr \Yc$.
\item[(ii)] The $F$-extension   $\A_{[\Zc]_\Fr }$ is an m-dissipative extension of the acoustic operator $\A_\Zc$.
\end{itemize}
 \end{thm}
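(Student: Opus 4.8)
The plan is to reduce the whole statement to the Friedrichs extension $[T_\Zc]_\Fr$ in $L^2(\paD)$ via the two fractional powers, and then to transport nonnegativity, $\cross$-selfadjointness and m-accretivity back through this conjugation. Write
\[
P := (\DepaD+1)^{1/4}_{H^{1/2}\shortto H^0}\in\Hom(H^{1/2},L^2(\paD)),\qquad Q := (\DepaD+1)^{-1/4}_{H^{-1/2}\shortto H^0}\in\Hom(H^{-1/2},L^2(\paD)),
\]
so that $T_\Zc = Q\,\Zc\,P^{-1}$ and $[\Zc]_\Fr = Q^{-1}\,[T_\Zc]_\Fr\,P$. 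From the $\cross$-adjoint formula for the fractional powers one reads off the relations $P^\cross = Q^{-1}$ and $Q^\cross = P^{-1}$ (equivalently $(Q^{-1})^\cross = P$), and on the pivot space $H^0=L^2(\paD)$ the $\cross$-adjoint degenerates to the ordinary Hilbert adjoint. Before using the construction I would first check that it applies: since $\zeta$ is a nonnegative measure, Theorem \ref{t:InterD} gives $\zeta\succeq 0$, whence for $g\in\dom\Zc = H^k$ one has $\<\Zc g,g\>_\paD = \<\zeta,|g|^2\>_\paD\ge 0$; together with the density of $H^k$ in $H^{1/2}$ (Lemma \ref{l:Mpsi*}) this makes $\Zc = \mul_\zeta^{k,-1/2}$ nonnegative and densely defined, so $T_\Zc$ is nonnegative and densely defined in $L^2(\paD)$ and $[T_\Zc]_\Fr$ is a genuine nonnegative selfadjoint extension.

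For the first half of (i) I would push the two properties of $[T_\Zc]_\Fr$ through the conjugation. Using the product rule $(BSA)^\cross = A^\cross S^\cross B^\cross$ for homeomorphisms $A,B$ and the selfadjointness $([T_\Zc]_\Fr)^\cross = ([T_\Zc]_\Fr)^* = [T_\Zc]_\Fr$, one computes
\[
([\Zc]_\Fr)^\cross = P^\cross\,([T_\Zc]_\Fr)^\cross\,(Q^{-1})^\cross = Q^{-1}\,[T_\Zc]_\Fr\,P = [\Zc]_\Fr,
\]
so $[\Zc]_\Fr$ is $\cross$-selfadjoint. For nonnegativity, given $f\in\dom[\Zc]_\Fr$ put $h := Pf\in\dom[T_\Zc]_\Fr$ and move $Q^{-1}$ across the pairing via $(Q^{-1})^\cross = P$:
\[
\<[\Zc]_\Fr f,f\>_\paD = \<[T_\Zc]_\Fr Pf,\,Pf\>_\paD = ([T_\Zc]_\Fr h\,|\,h)_{L^2}\ge 0 .
\]

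For the intermediate property I would argue purely by adjunction. Since $[T_\Zc]_\Fr\supseteq T_\Zc$, conjugation by the homeomorphisms gives $[\Zc]_\Fr = Q^{-1}[T_\Zc]_\Fr P\supseteq Q^{-1}T_\Zc P = \Zc$, i.e.\ $\Gr\Zc\subseteq\Gr[\Zc]_\Fr$. For the upper inclusion I use that a nonnegative measure is real, $\ove\zeta = \zeta$, so by Lemma \ref{l:Mpsi*} one has $\Zc^\cross = \mul_{\ove\zeta}^{1/2,-k} = \mul_\zeta^{1/2,-k} = \Yc$. Applying the order-reversing $\cross$-adjunction to $\Zc\subseteq[\Zc]_\Fr$ and invoking the just-proved $\cross$-selfadjointness yields
\[
[\Zc]_\Fr = ([\Zc]_\Fr)^\cross\subseteq \Zc^\cross = \Yc ,
\]
that is $\Gr[\Zc]_\Fr\subseteq\Gr\Yc$. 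This finishes (i), and (ii) is then immediate from $Q\,[\Zc]_\Fr\,P^{-1} = [T_\Zc]_\Fr$: the conjugated operator appearing in Theorem \ref{t:DtN-GIBC}(ii) is exactly the nonnegative selfadjoint operator $[T_\Zc]_\Fr$, which is m-accretive, so the equivalence (i)$\Lra$(ii) of Theorem \ref{t:DtN-GIBC} gives m-dissipativity of $\A_{[\Zc]_\Fr}$. I expect the main difficulty to be bookkeeping rather than conceptual: one must rigorously justify the composition rules for $\cross$-adjoints of products mixing the homeomorphisms $P,Q$ between different Sobolev spaces with the unbounded operator $[T_\Zc]_\Fr$, and keep track of the collapse of the $\cross$-pairing to the $L^2$ inner product on the pivot space — exactly the manipulations governed by Appendix \ref{s:GenAcc}. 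The only place where the measure hypothesis is needed beyond plain accretivity is the identification $\Zc^\cross = \Yc$, which rests on $\ove\zeta = \zeta$.
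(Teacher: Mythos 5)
Your proof is correct and follows essentially the same route as the paper: reduce to the classical Friedrichs extension $[T_\Zc]_\Fr$ in $L^2(\paD)$ via conjugation with the fractional powers of $\De_\paD+1$, transport nonnegativity and ($\cross$-)selfadjointness back through the homeomorphisms, obtain the sandwich $\Gr\Zc\subseteq\Gr[\Zc]_\Fr\subseteq\Gr\Yc$ from $T_\Zc\subseteq[T_\Zc]_\Fr$ together with $\Zc^\cross=\Yc$ (which uses $\ove\zeta=\zeta$, exactly as in the paper's appeal to Section \ref{ss:LemDistr}), and conclude (ii) from the equivalence (i) $\Lra$ (ii) of Theorem \ref{t:DtN-GIBC}. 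The paper states these steps tersely; you have merely made the $\cross$-adjoint bookkeeping explicit, and it checks out against the formula $\bigl((\De_\paD+c)^{s/2}_{H^t\shortto H^{t-s}}\bigr)^\cross=(\De_\paD+c)^{s/2}_{H^{-t+s}\shortto H^{-t}}$.
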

 
 \begin{proof}
 The results of Section \ref{ss:LemDistr} ensure that $\Zc$ is nonnegative and densely defined and that 
 $\Yc = \Zc^\cross$. 
 Statement (i) follows from the analogous results for the standard  Friedrichs extension $[T_\Zc]_\Fr$ of  the operator $T_\Zc$ in  
 $L^2 (\paD)$ (see \cite{BHdS20,DM26} for the extension theory of symmetric operators in a Hilbert space).
 Besides, the  extension theory  implies that $[T_\Zc]_\Fr$ is a nonnegative  selfadjoint operator,  and so, is an m-accretive operator in $L^2 (\paD)$. 
The part (i) $\Lra$ (ii) of Theorem \ref{t:DtN-GIBC} implies statement (ii).
 \end{proof}

 \begin{rem} \label{r:LipPos}
In the case where $\paD$ has only the Lipschitz regularity, the Friedrichs-type  extensions can be used similarly. 
Combining the proofs of Theorems \ref{t:F} and \ref{t:Inter}, we obtain the following result.
Assume that $s \ge 1/2$, $\imp \in \MM (H^{1/2}_\DepaD, H^{-s}_\DepaD )$, and $\imp \ge 0$ a.e. on $\paD$.
Then $\Zc = \mulp_\imp^{s,-1/2}$ is a nonnegative densely defined impedance operator that admits 
 the nonnegative Friedrichs-type  extension $[\Zc]_\Fr = ([\Zc]_\Fr)^\cross $, which is intermediate between 
$\mulp_\imp^{s,-1/2}$ and $\mulp_\imp^{1/2,-s}$. This associates with the impedance coefficient $\imp (\cdot)$ the boundary condition $[\Zc]_\Fr \ \ga_0 (p) =  \gan (\vbf)$, which is  m-dissipative  in the sense that the acoustic operator$\A_{[\Zc]_\Fr }$ is m-dissipative.
\end{rem}

\section{Discreteness of acoustic spectra}
\label{s:Disc}

\subsection{M-dissipative acoustic operators with compact resolvents}
\label{s:CompRes}

An eigenvalue $\la$ of an operator $T:\dom T \subseteq \Xf \to\Xf$  is called \emph{isolated} if $\la$ is an isolated point of the spectrum $\si (T)$ of $T$. The \emph{discrete spectrum} $\si_\disc (T)$ of $T$ is the set of isolated eigenvalues of $T$ with finite algebraic multiplicities.
An operator $T$ is said to have a  \emph{purely discrete spectrum} if $\si(T) = \si_\disc (T)$. The closed set $\si_\ess (T) := \si (T) \setminus \si_\disc (T)$ is called the \emph{essential spectrum} of $T$  \cite[Vol.4]{RS}.

Assume that the resolvent set $\rho (T)$ is not empty  and $(T-\la_0 )^{-1} \in \Sf_\infty (\Xf) $ for a certain $\la_0 \in \rho(T)$. 
Then $(T-\la)^{-1} \in \Sf_\infty (\Xf)$ for every $\la \in \rho (T)$, and one says that $T$ is an \emph{operator with compact resolvent}. The following statements are well-known \cite{Kato}:
\begin{gather} \label{e:SiDisc1}
	\text{	if $T$ has a compact resolvent, then $\si (T) = \si_\disc (T)$;}\\
		\text{if $T=T^*$ and $\si(T) = \si_\disc (T) $, then $T$ has  compact resolvent.}  \label{e:SiDisc2}
\end{gather} 

We use the  (orthogonal) decomposition \cite{L13,K26}
\begin{gather*} 
	\LL^2_{\ab} (\D) = \HH_0 (\Div 0,\D) \oplus \ab^{-1} \gradm H^1 (\D) , 
\end{gather*}
where $ \ab^{-1} \gradm H^1 (\D) := \{ \ab^{-1} \nabla p : p \in H^1 (\D)\}$ is a closed subspace of $\LL^2_{\ab} (\D)$.
Hence, the space $ \LL^2_{\ab,\beta} (\D) = \LL^2_{\ab} (\D) \oplus L^2_\beta (\D) $ 
admits the decomposition
\begin{gather} \label{e:LabDec}
	\LL^2_{\ab,\beta} (\D) = (\HH_0 (\Div 0,\D) \oplus\{0\}) \  \oplus \ \GG_{\ab,\beta} ,  
\end{gather}
where 
$ \GG_{\ab,\beta} := \ab^{-1} \gradm H^1 (\D) \oplus L^2_{\beta} (\D)
$.

Let $\Zc :\dom \Zc \subseteq H^{1/2} (\paD) \to H^{-1/2} (\paD)$ be an arbitrary impedance operator and let $\A_\Zc$ be the acoustic operator  associated with  $\Zc  \ga_0 (p) =  \gan (\vbf)  $. It is show in \cite{K26} that the decomposition \eqref{e:LabDec} reduces  $\A_\Zc$  to  
\begin{gather} \label{e:Red}
\text{the orthogonal sum} \qquad \A_\Zc = 0 \oplus (\A_\Zc |_{\GG_{\ab,\beta}}) ,
\end{gather}
where $\A_\Zc |_{\GG_{\ab,\beta}} : (\dom \A_\Zc \cap  \GG_{\ab,\beta}) \subset \GG_{\ab,\beta} \to \GG_{\ab,\beta}$ is the part of $\A_\Zc$ in its reducing subspace $\GG_{\ab,\beta} $, and $0 $ stands for the zero operator in the reducing subspace $\HH_0 (\Div 0,\D) \oplus \{0\}$.
Actually, similar  orthogonal decompositions w.r.t. \eqref{e:LabDec} hold also for $\A_\Min$, $\A_\Max$, and for every acoustic operator defined by a boundary condition in the sense of Definition \ref{d:BC}, see \cite[Sections 2.4 and 6]{K26}.

The decomposition \eqref{e:Red} shows, in particular, that $\A_\Zc$ has 
an infinite-dimensional kernel 
$
\ker\A_\Zc  \supseteq (\HH_0 (\Div 0,\D) \oplus\{0\}) .
$
Thus, $0 \in \si_\ess (\A_\Zc)$, and so, any of operators $\A_\Zc$ cannot have a compact resolvent.

In order to  apply resolvent compactness to the study of discrete spectra of acoustic operators, \cite{JS21,K26,K25}  consider the part $\A_\Zc |_{\GG_{\ab,\beta}}$ 
of $\A_\Zc$  in the Hilbert space $\GG_{\ab,\beta}$.

It follows from \cite[Theorem 2.15 and Section 2.1]{K26}  that, 
\begin{multline} \label{e:CRAZ}
	\text{for an m-dissipative  $A_\Zc $, the operator $\A_\Zc |_{\GG_{\ab,\beta}}$ has compact resolvent} \\ \text{if and only if $\Zc \in \Sf_\infty (H^{1/2} (\pa \D), H^{-1/2} (\pa \D))$.}
\end{multline}
An equivalent reformulation of this result is \cite[Theorem 1.1]{K25} (for the passage from  \cite[Theorem 1.1]{K25} to  \eqref{e:CRAZ}, see \cite[Sections 2.3, 3.2, and 3.3]{K25}).
Besides, the operator $A_\Zc $ is m-dissipative if and only if $\A_\Zc |_{\GG_{\ab,\beta}}$ is so \cite{K26}.

\subsection{Impedance coefficients generating discrete spectra}
\label{s:InpCoefDisc}

\begin{thm}
\label{t:AZG}
Assume \eqref{a:Ck-11}. Let $\zeta \in \MM [H^{1/2} (\paD) \sto H^{-s} (\paD) ]$ with a certain $s \in [-1/2,k]$. Let $\Zc = \mul_\zeta^{1/2,-s}$. 
Then:
\item[(i)]  $\A_\Zc |_{\GG_{\ab,\beta}}$ is an m-dissipative operator with compact resolvent if and only if 
\begin{gather} \label{e:MSInfCr}
\text{$\re \zeta $ is a nonnegative measure  \quad and } \quad \zeta \in  \MM^{\Sf_\infty} [H^{1/2} (\pa \D) \sto H^{-1/2} (\pa \D) ] 
 \end{gather}
(note that, due to Lemma \ref{l:meas}, condition \eqref{e:MSInfCr} can be equivalently reformulated as $\re \zeta \succeq 0$  and $\zeta \in  \MM^{\Sf_\infty} [H^{1/2}  \sto H^{-1/2}  ]$).
\item[(ii)] Assume that \eqref{e:MSInfCr} is satisfied. Then $\si (\A_\Zc |_{\GG_{\ab,\beta}}) = \si_\disc (\A_\Zc |_{\GG_{\ab,\beta}})$, and so,
the spectrum of the acoustic operator $\A_\Zc$ consists of isolated eigenvalues, among those only the zero eigenvalue $\la_0 =0$ has an infinite algebraic multiplicity.
\item[(iii)] $\A_\Zc |_{\GG_{\ab,\beta}}$ is a selfadjoint operator with purely discrete spectrum if and only if 
\begin{gather*} 
	\re \zeta =  0 \quad \text{ and } \quad
\zeta \in  \MM^{\Sf_\infty} [H^{1/2} (\paD) \sto H^{-1/2} (\pa \D) ].
\end{gather*}
\end{thm}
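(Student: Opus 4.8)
The plan is to reduce everything to the characterisation \eqref{e:CRAZ} of compact resolvents together with the reduction \eqref{e:Red}, $\A_\Zc = 0 \oplus (\A_\Zc |_{\GG_{\ab,\beta}})$, and then to translate the two requirements in \eqref{e:MSInfCr} into, respectively, m-dissipativity of $\A_\Zc$ and membership of $\Zc$ in $\Sf_\infty(H^{1/2}(\paD), H^{-1/2}(\paD))$. Recall from Section \ref{s:CompRes} that $\A_\Zc$ is m-dissipative if and only if $\A_\Zc |_{\GG_{\ab,\beta}}$ is, so I may pass freely between the two. The central observation is that, once $\zeta \in \MM^{\Sf_\infty} [H^{1/2} \sto H^{-1/2}]$ is known, the restricted multiplication operator $\Zc = \mul_\zeta^{1/2,-s}$ collapses to the everywhere-defined bounded operator $\mul_\zeta^{H^{1/2} \sto H^{-1/2}}$, so that Remark \ref{r:ReM>0} applies verbatim.

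For part (i), I would first prove the forward implication. If $\A_\Zc |_{\GG_{\ab,\beta}}$ is m-dissipative with compact resolvent, then $\A_\Zc$ is m-dissipative, so by Theorem \ref{t:DtN-GIBC} the operator $\Zc$ is maximal accretive and densely defined, and by \eqref{e:CRAZ} one has $\Zc \in \Sf_\infty(H^{1/2}, H^{-1/2})$. Since a compact operator is everywhere defined, comparison with the domain \eqref{e:Mphi2} forces $\mul_\zeta^{H^{1/2} \sto H^{-s}} u \in H^{-1/2}$ for every $u \in H^{1/2}$; hence $\zeta \in \MM [H^{1/2} \sto H^{-1/2}]$, the operator $\Zc$ coincides with $\mul_\zeta^{H^{1/2} \sto H^{-1/2}}$, and its compactness yields $\zeta \in \MM^{\Sf_\infty} [H^{1/2} \sto H^{-1/2}]$. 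Accretivity of this bounded operator is, by Remark \ref{r:ReM>0}, equivalent to $\re\zeta$ being a nonnegative measure, which gives \eqref{e:MSInfCr}. Conversely, assuming \eqref{e:MSInfCr}, the membership $\zeta \in \MM^{\Sf_\infty} [H^{1/2} \sto H^{-1/2}]$ again identifies $\Zc$ with the compact operator $\mul_\zeta^{H^{1/2} \sto H^{-1/2}}$ (for $s<1/2$ this compactness is in any case automatic from \eqref{e:MinMSinf}), while the measure condition makes $\Zc$ accretive and, being bounded and everywhere defined, maximal accretive; Theorem \ref{t:DtN-GIBC} then gives m-dissipativity of $\A_\Zc$ and \eqref{e:CRAZ} the compact resolvent of $\A_\Zc |_{\GG_{\ab,\beta}}$.

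Parts (ii) and (iii) are then spectral corollaries. Under \eqref{e:MSInfCr}, part (i) and \eqref{e:SiDisc1} give $\si(\A_\Zc |_{\GG_{\ab,\beta}}) = \si_\disc(\A_\Zc |_{\GG_{\ab,\beta}})$; feeding this into \eqref{e:Red}, the spectrum of $\A_\Zc$ is the union of this discrete set with $\{0\}$, the latter carrying the infinite-dimensional kernel of the summand $\HH_0 (\Div 0,\D) \oplus \{0\}$, which proves (ii). For (iii) I would use that $\A_\Zc$ is selfadjoint if and only if $\A_\Zc |_{\GG_{\ab,\beta}}$ is (the complementary summand being the selfadjoint zero operator), and that by the last assertion of Theorem \ref{t:DtN-GIBC} selfadjointness is equivalent to $\Zc^\cross = -\Zc$; for $\Zc = \mul_\zeta^{H^{1/2} \sto H^{-1/2}}$ this reads $\mul_{\ove\zeta}^{H^{1/2} \sto H^{-1/2}} = -\mul_\zeta^{H^{1/2} \sto H^{-1/2}}$, i.e. $\re\zeta = 0$ by \eqref{e:reMul=re}. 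Combining with the elementary facts that a selfadjoint operator is m-dissipative and that, by \eqref{e:SiDisc1}--\eqref{e:SiDisc2}, a selfadjoint operator has purely discrete spectrum exactly when it has compact resolvent, part (i) delivers the stated equivalence.

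The main obstacle I anticipate is not spectral but bookkeeping: carefully justifying that the restricted operator $\mul_\zeta^{1/2,-s}$, defined through the a priori larger target space $H^{-s}$, actually coincides with the everywhere-defined bounded multiplier $\mul_\zeta^{H^{1/2} \sto H^{-1/2}}$ precisely when $\zeta \in \MM^{\Sf_\infty} [H^{1/2} \sto H^{-1/2}]$, and keeping the three regularity indices $s$, $1/2$, and $k$ consistent across the split into the cases $s<1/2$ (automatic compactness) and $s \ge 1/2$. Once this identification is secured, \eqref{e:CRAZ}, Remark \ref{r:ReM>0}, and the standard facts \eqref{e:SiDisc1}--\eqref{e:SiDisc2} carry out the remaining work.
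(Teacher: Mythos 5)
Your proposal is correct and follows essentially the same route as the paper, which proves (i) by combining \eqref{e:CRAZ} with Remark \ref{r:ReM>0} and then derives (ii) and (iii) from \eqref{e:SiDisc1}--\eqref{e:SiDisc2} and the equivalence $\A_\Zc=\A_\Zc^* \Lra \re\zeta=0$; you merely spell out the bookkeeping (the identification of $\mul_\zeta^{1/2,-s}$ with the bounded multiplier $\mul_\zeta^{H^{1/2}\sto H^{-1/2}}$ once compactness forces $\dom\Zc=H^{1/2}$) that the paper leaves implicit.
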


\begin{proof}
Statement (i) follows from \eqref{e:CRAZ} and Remark \ref{r:ReM>0}.
Statement (ii) follows from statement (i) and \eqref{e:SiDisc1}.
Statement (iii) is the combination of statement (i), the equivalence $\A_\Zc = \A_\Zc^* \Lra \re \zeta = 0$ of Remark \ref{r:ReM>0}, and statements \eqref{e:SiDisc1}- \eqref{e:SiDisc2}.
\end{proof}


Consider the case where $\re \zeta \succeq 0$  and
$\zeta \in  \MM [H^{1/2}  \sto  H^{-s} ]$ with a certain $s < 1/2$. Then the compact embedding $H^{-s}  \imb \imb H^{-1/2} $ implies
\[
\MM [H^{1/2}  \sto H^{-s} ] \imb \MM^{\Sf_\infty} [H^{1/2} \sto H^{1/2} ],
\]
and so, by Theorem \ref{t:AZG},
	$\A_\Zc |_{\GG_{\ab,\beta}}$ is an m-dissipative operator with a compact resolvent.
	
A more general sufficient condition of the same type is the following. 

\begin{thm} \label{t:M<1212}
	Suppose \eqref{a:Ck-11}, $t_1 \ge 0$, $t_2 \ge 0$, and $s_* = \frac{t_1+t_2}{2} <1/2$. Assume that  $\zeta \in \MM [H^{t_1} (\pa \D) \sto H^{-t_2} (\pa \D) ]$. 
 Then:
	\item[(i)] $\zeta \in \MM [H^{s_*} (\pa \D) \sto H^{-s_*} (\pa \D) ] \subseteq \MM^{\Sf_\infty} [H^{1/2} (\pa \D) \sto H^{-1/2} (\pa \D) ]$; in particular,  the operators $\Zc =  \mul_\zeta^{H^{1/2} \sto H^{-1/2}}$ and 
	$\A_\Zc $ are well-defined.
	\item[(ii)] If  $\re \zeta \succeq 0$,  then $\A_\Zc |_{\GG_{\ab,\beta}}$ is an m-dissipative operator with compact resolvent.	
\end{thm}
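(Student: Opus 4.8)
The plan is to treat the two parts separately: part (i) is a matter of multiplier bookkeeping, moving the given multiplier onto the diagonal scale $H^{s_*}\sto H^{-s_*}$ by interpolation and then landing it in the compact class, while part (ii) simply feeds the outcome of (i) into Theorem \ref{t:AZG}.

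For part (i), I would first record that the constraint $s_* = (t_1+t_2)/2 < 1/2$ forces $t_1+t_2 < 1$, hence $t_1, t_2 \in [0,1) \subseteq [0,k]$ (recall $k \ge 1$), so all the indices below lie in the admissible range $[-k,k]$. Using the symmetry $\MM[H^{t_1}\sto H^{-t_2}] = \MM[H^{t_2}\sto H^{-t_1}]$ from \eqref{e:Mds1s2s2s1}, I would apply the interpolation embedding \eqref{e:MInt} with $s_0 = t_1$, $s_1 = t_2$, and $\vth = 1/2$; the two target exponents both become $(t_1+t_2)/2 = s_*$, giving $\zeta \in \MM[H^{s_*}\sto H^{-s_*}]$. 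Then I would invoke the compactness embedding \eqref{e:MinMSinf} with target pair $(1/2,1/2)$ and source pair $(s_*,s_*)$: the required strict inequality $s_*+s_* < 1/2+1/2$ is exactly $s_* < 1/2$, which holds by hypothesis, so $\MM[H^{s_*}\sto H^{-s_*}] \subseteq \MM^{\Sf_\infty}[H^{1/2}\sto H^{-1/2}]$. This places $\zeta$ in $\MM^{\Sf_\infty}[H^{1/2}\sto H^{-1/2}] \subseteq \MM[H^{1/2}\sto H^{-1/2}]$, so by Definition \ref{d:MMs1s2} the bounded operator $\Zc = \mul_\zeta^{H^{1/2}\sto H^{-1/2}}$ is well-defined, and then $\A_\Zc$ is well-defined by Definition \ref{d:BC}.

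For part (ii), I would note that for coinciding indices the restricted multiplication operator $\mul_\zeta^{1/2,-1/2}$ coincides as a map with the bounded $\mul_\zeta^{H^{1/2}\sto H^{-1/2}} = \Zc$ (its domain in \eqref{e:Mphi2} collapses to all of $H^{1/2}$), so Theorem \ref{t:AZG} applies with $s = 1/2$. It then remains to verify condition \eqref{e:MSInfCr}. Its second half, $\zeta \in \MM^{\Sf_\infty}[H^{1/2}\sto H^{-1/2}]$, is exactly the conclusion of (i). For the first half I would use that $\zeta \in H^{-s_*}$ by \eqref{e:Mds1s2s2s1}, whence $\zeta \in H^{-s_*}\subseteq H^{-1/2}\subseteq H^{-k}$ and so $\re\zeta \in H^{-k}$; combining the hypothesis $\re\zeta \succeq 0$ with Lemma \ref{l:meas}(ii) shows that $\re\zeta$ is a nonnegative measure. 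Thus \eqref{e:MSInfCr} holds, and Theorem \ref{t:AZG}(i) yields that $\A_\Zc|_{\GG_{\ab,\beta}}$ is m-dissipative with compact resolvent.

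The argument is essentially a composition of results already established, so there is no deep obstacle; the only point requiring genuine care is the index arithmetic in the interpolation step, where one must check that the strict inequality $s_* < 1/2$ coming from the hypothesis matches exactly the strict-inequality threshold $s_1+s_2 < t_1+t_2$ that powers the compact-multiplier embedding \eqref{e:MinMSinf}. I would also double-check that $\re\zeta$ genuinely lands in a space (here $H^{-k}$) to which Lemma \ref{l:meas}(ii) applies, so that the abstract nonnegativity $\re\zeta \succeq 0$ can be upgraded to the measure statement demanded by \eqref{e:MSInfCr}.
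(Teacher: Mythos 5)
Your proposal is correct and follows essentially the same route as the paper's (very terse) proof: interpolation via \eqref{e:MInt} to land $\zeta$ in $\MM[H^{s_*}\sto H^{-s_*}]$, the compactness embedding \eqref{e:MinMSinf} (exploiting $2s_*<1$) to reach $\MM^{\Sf_\infty}[H^{1/2}\sto H^{-1/2}]$, and then Theorem \ref{t:AZG} with $s=1/2$ together with the Lemma \ref{l:meas} reformulation of \eqref{e:MSInfCr} for part (ii). Your additional checks — that $t_1,t_2\in[0,k]$, that $\mul_\zeta^{1/2,-1/2}$ collapses to the bounded operator $\mul_\zeta^{H^{1/2}\sto H^{-1/2}}$, and that $\re\zeta\in H^{-k}$ so Lemma \ref{l:meas}(ii) applies — are exactly the details the paper leaves implicit.
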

\begin{proof}
Statement (i) follows from \eqref{e:MInt} and the arguments of the proof of Theorem \ref{t:H-sStar}. 
Since $0\le s_*<1/2$, statement (ii)  follows from Theorem \ref{t:AZG} and the part $\zeta \in  \MM^{\Sf_\infty} [H^{1/2} (\pa \D) \sto H^{-1/2} (\pa \D) ]$ of statement (i). 
\end{proof}

\begin{rem} \label{r:LipDisc}
Consider the case where $\paD$ has only the Lipschitz regularity and restrict ourselves to pointwise multipliers. In this case, a multiplicative positivity result similar to Lemma \ref{l:M>0} seems to be not available yet. A simplified analogues of Theorems \ref{t:AZG} and \ref{t:M<1212} with  similar, but simpler, proofs can be formulated as follows. 
Assume that $s \ge 1/2$, $\imp \in \MM (H^{1/2}_\DepaD, H^{-s}_\DepaD )$, and $\re \imp \ge 0$ a.e. (on $\paD$).
Put $\Zc = \mulp_\imp^{1/2,-s}$. Then $\A_\Zc |_{\GG_{\ab,\beta}}$ is an m-dissipative operator with compact resolvent if and only if 
$\imp \in  \MM^{\Sf_\infty} (H^{1/2} , H^{-1/2})$;
besides, $\A_\Zc |_{\GG_{\ab,\beta}}$ is a selfadjoint operator with purely discrete spectrum if and only if 
$\re \imp = 0$ a.e. and $\imp \in  \MM^{\Sf_\infty} (H^{1/2} , H^{-1/2})$.
The interpolation \eqref{e:MpInt}  can be used similarly to  Theorem \ref{t:M<1212} and leads to the following result. If $\re \imp \ge 0$, $t_1 , t_2 \in [0,1]$, and $ t_1+t_2 <1$, then
$\imp  \in \MM [H^{t_1} , H^{-t_2} ]$ implies that 
$\A_\Zc |_{\GG_{\ab,\beta}}$ is an m-dissipative operator with  compact resolvent.
\end{rem}

\subsection{The case of 2-dimensional acoustic operators}
\label{s:2d}


In the case of an arbitrary 2-dimensional Lipschitz domain $\D \subset \RR^2$, the regularity assumption  
\eqref{a:Ck-11} is satisfied automatically. So, all results of  Sections \ref{s:PM+M-d}-\ref{s:InpCoefDisc} are applicable. The classes of pointwise multipliers $\MM^{(\Sf_\infty)} (H^{t}_\DepaD, H^{-s}_\DepaD)$ for $s,t \in [-1,1]$ are subsets of the  classes of Sobolev multipliers $  \MM^{(\Sf_\infty)} [H^{t}  \sto H^{-s} ]$.



For $d=2$, the condition $\zeta \in \MM [H^{t_1} (\pa \D) \sto H^{-t_2} (\pa \D) ]$ in Theorem \ref{t:M<1212} can be replaced by the more explicit condition $\zeta \in H^{-1/2+\ep} (\paD)$.

\begin{thm} \label{t:H-sd=2}
	Let $d=2$. Let $0<s<1/2$ and $\zeta \in H^{-s} (\paD)$.
	Then 
	\[ \text{$\zeta \in   \MM^{\Sf_\infty} [H^{1/2} (\pa \D) \sto H^{-1/2} (\pa \D) ]$, \qquad $\Zc = \mul_\zeta^{H^{1/2} \sto H^{-1/2}}$ is well-defined,}
	\]  
and the following statements hold for the acoustic operator $\A_\Zc$:
\item[(i)] The operator $\A_\Zc$ is m-dissipative if and only if $\re \zeta$ is a nonnegative measure
(or, equivalently, if and only if $\re \zeta \succeq 0$).
Besides, $\re \zeta = 0$ is equivalent to $\A_\Zc = \A_\Zc^*$.
\item[(ii)] If $\re \zeta \succeq 0$,  then $\A_\Zc |_{\GG_{\ab,\beta}}$ is an  m-dissipative operator with compact resolvent.
\end{thm}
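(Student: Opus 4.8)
The plan is to treat this theorem as the specialization of the general machinery of Sections \ref{s:Gen}--\ref{s:InpCoefDisc} to the critical index configuration that occurs when $d=2$. The key observation is that for $d=2$ the critical Sobolev exponent equals $(d-1)/2 = 1/2$, so assumption \eqref{a:Ck-11} holds with $k=1$, and the indices $1/2,1/2$ appearing in the boundary condition $\zeta\,\ga_0(p) = \gan(\vbf)$ sit exactly at the critical threshold handled by Theorem \ref{t:H-sStar}.

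First I would establish the membership and well-definedness claim. Applying Theorem \ref{t:H-sStar} with $s_1 = s_2 = \frac12 = \frac{d-1}{2}$ and $s_* = s$ (admissible since $0 < s < \frac12 = s_1 + s_2 - \frac{d-1}{2}$ and $-k \le s$) yields the compact embedding $H^{-s}(\paD) \imb \MM^{\Sf_\infty}[H^{1/2}(\paD) \sto H^{-1/2}(\paD)]$; equivalently one may read this off directly from \eqref{e:(d-1)/2} with $s_1 = 1/2$. Hence $\zeta \in \MM^{\Sf_\infty}[H^{1/2}\sto H^{-1/2}] \subseteq \MM[H^{1/2}\sto H^{-1/2}]$, and the bounded multiplication operator $\Zc = \mul_\zeta^{H^{1/2}\sto H^{-1/2}} \in \Lc(H^{1/2}, H^{-1/2})$ is well-defined.

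For part (i) I would invoke the m-dissipativity criterion of Theorem \ref{t:DtN-GIBC}. Since $\Zc$ is bounded and defined on all of $H^{1/2}$, it is automatically densely defined and admits no proper extension, so it is maximal accretive precisely when it is accretive. By \eqref{e:reMul=re} its real part is $\re\Zc = \mul_{\re\zeta}^{H^{1/2}\sto H^{-1/2}}$, which by Lemma \ref{l:M>0} is nonnegative if and only if $\re\zeta \succeq 0$; and since $\re\zeta \in H^{-s} \subseteq H^{-k}$, Lemma \ref{l:meas}(ii) identifies this with $\re\zeta$ being a nonnegative measure. The equivalence (i) $\Leftrightarrow$ (iv) of Theorem \ref{t:DtN-GIBC} then gives that $\A_\Zc$ is m-dissipative exactly when $\re\zeta \succeq 0$. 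For the selfadjointness statement, since $\Zc^\cross = \mul_{\ove\zeta}^{H^{1/2}\sto H^{-1/2}}$, the condition $\Zc^\cross = -\Zc$ reduces via \eqref{e:reMul=re} to $\mul_{\re\zeta}^{H^{1/2}\sto H^{-1/2}} = 0$, i.e. $\re\zeta = 0$. All of this is exactly the content of Remark \ref{r:ReM>0}, which I would cite to keep the argument short.

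For part (ii), assuming $\re\zeta \succeq 0$, both clauses of condition \eqref{e:MSInfCr} now hold: $\re\zeta$ is a nonnegative measure by part (i), and $\zeta \in \MM^{\Sf_\infty}[H^{1/2}\sto H^{-1/2}]$ by the first step. Theorem \ref{t:AZG}(i) (with $s = 1/2$, so that $\mul_\zeta^{1/2,-1/2}$ coincides with the bounded operator $\Zc$) then yields that $\A_\Zc |_{\GG_{\ab,\beta}}$ is m-dissipative with compact resolvent. I expect no serious obstacle here beyond bookkeeping; the one point that genuinely requires the earlier work is the critical-case compact embedding of the first step, where $s_2 = (d-1)/2$ lies at the boundary of the admissible range and the gain of compactness rests on the interpolation argument used to prove Theorem \ref{t:H-sStar}.
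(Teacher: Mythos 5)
Your proof is correct and takes essentially the same route as the paper: the paper establishes $\zeta \in \MM^{\Sf_\infty}[H^{1/2}\sto H^{-1/2}]$ by combining Theorem \ref{t:H-s} (with $s_2 = 1/2+\ep$) and Theorem \ref{t:M<1212}, whereas you read it off from Theorem \ref{t:H-sStar} / formula \eqref{e:(d-1)/2}, which is the same interpolation argument in packaged form. Parts (i) and (ii) then follow from Remark \ref{r:ReM>0}, Theorem \ref{t:DtN-GIBC}, and Theorem \ref{t:AZG} exactly as you describe, matching the paper's citation of Theorems \ref{t:AZG} and \ref{t:M<1212}.
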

\begin{proof}
Let $d=2$, $0<s <1/2$,  and $\zeta \in H^{-s} $. 
By Theorem  \ref{t:H-s}, there exists  $\ep \in (0,1/2]$  such that $s+1/2+\ep <1$ and $\zeta \in \MM [H^s  \sto H^{-1/2-\ep} ]$. Theorem \ref{t:M<1212} applied to $t_1=s$ and $t_2 = 1/2+\ep$ implies $\zeta \in   \MM^{\Sf_\infty} [H^{1/2}  \sto H^{-1/2} ]$. Now, Theorems \ref{t:AZG} and \ref{t:M<1212} imply statements (i) and (ii).
\end{proof}

\begin{ex}[images of 1-d measures]\label{ex:C}
Let $\mu $ be a 1-dimensional probability measure supported on $\Ic=[0,1]$ and let $\mu$ be of Sobolev dimension $\dim_{\Sr}  (\mu) >0$, e.g., the Cantor probability measures on $\Ic$ corresponding to  symmetric Cantor sets with dissection ratio $ r \in (0,1/2)$ can be taken as $\mu$ (see  \cite{M15} for $\dim_{\Sr}  (\cdot)$ and for Cantor measures).
Then, $ \mu  \in H^{-1/2+\ep} [0,1]$ with a certain $\ep>0$. Let us take any nonempty  connected closed subset $\Ga_1 \subseteq \paD \subset \RR^2$. Using local coordinates, one can construct a  $C^{0,1}$-parametrization $\Phi :[0,1] \to \Ga_1$ that is locally bi-Lipschitz on $[0,1]$ and bijective on $[0,1)$.
The corresponding push-forward $\Phi_\cross \mu \in H^{-1/2+\ep} (\paD)$ is a nonnegative measure on $\paD$. Put $\Zc = c \mul_{\Phi_\cross \mu}$ with a constant $c \in \ove \CC_\rr$.  By Theorem \ref{t:H-sd=2},  $\A_\Zc$  is  an m-dissipative acoustic operator  with compact resolvent and purely discrete spectrum. Besides, $\A_\Zc = \A_\Zc^*$ if and only if $\re c = 0$.
\end{ex}

\section{Random impedance coefficients}
\label{s:Gaussian}

Let $(\Om,\Fc,\PP)$ be a complete probability space.
In what follows, random variables are $(\Om,\Fc)$-measurable functions from $\Om$ to $\CC$. Functions $h:\Om \to \Yf$, $h:\om  \mapsto h_\om$, with values in a separable Banach space $\Yf$ such that $\{ \om \in \Om : h_\om \in B\} \in \Fc$ for any Borel set $B$ in $\Yf$ are called random ($\Yf-$)vectors. Let $\Yf'$ be the dual space of $\Yf$.
Then $h:\Om \to \Yf$ is a random vector if and only if $\ell (h)$ is a random variable for every $\ell \in \Yf'$ \cite{DZ92}. That is, for a separable Banach space $\Yf$,
\begin{gather} \label{e:w=sR}
\text{weak randomness of $h:\Om \to \Yf$ is equivalent to strong randomness.}
\end{gather}
Let $\Xf$ be a separable Hilbert space and let $h$ be a random $\Xf$-vector. By $\EE h \in \Xf$ we denote the expectation of $h$ if it exists in the Gelfand–Pettis sense, see \cite{K85}. In this case, 
the variance of $h$ is $\Var (h):= \EE (\| h - \EE h\|_\Xf^2) $.

\subsection{Fractional Gaussian fields and rough Weyl's asymptotics}
\label{s:FGF}

Let $\D$ be a  Lipschitz domain. 
Recall that $\{Y_n\}_{n \in \NN}$ is an orthonormal basis in $L^2 (\pa \D)$ consisting of real-valued eigenfunctions of the nonnegative Laplace-Beltrami operator $\De_\paD$, where $Y_n$ are numbered in such a way that $\DepaD Y_n =  \mu_n Y_n $ and $\{\mu_n\}_{n \in \NN}$ is a non-decreasing sequence of eigenvalues of $\De_\paD$. We denote by $b_0 \in \NN$ the dimensionality of the space $\ker \DepaD$ of locally constant function on $\paD$. For $1 \le n \le b_0$, we choose $Y_n$ such that they are nonnegative.

Let $s \in \RR$. Using the analogy with the $C^\infty$-manifolds and open bounded domains (see \cite{CS25} and the references therein),
we introduce a fractional Gaussian field (FGF) with index $s$ on a Lipschitz boundary $\paD$ as the formal random series 
\begin{gather} \label{e:FGF}
\Xi_s = \sum_{n=b_0+1}^{+\infty} \xi_n \mu_n^{-s/2} Y_n ,
\end{gather}
where $\xi_n$ are i.i.d. $\RR$-valued Gaussian random variables such that $\EE (\xi_n) = 0$ and $\EE (\xi_n^2) =1$ for all $n$.
Note that $\{\mu_n\}_{n=b_0+1}^{+\infty} = \si (\DepaD) \cap (0,+\infty)$.

In order to show that \eqref{e:FGF} strongly converges in an appropriate  $H^s_\DepaD$-sense,
we need the growth rate of the Laplace-Beltrami eigenvalues $\mu_n$. 
The following very rough asymptotics is sufficient for the complete characterization of the  convergence of  \eqref{e:FGF} in  Theorem \ref{t:FGF}.

\begin{thm}[rough Weyl-type asymptotics] \label{t:W}
The eigenvalues of $\DepaD$ satisfy
\begin{equation} \label{e:asymp}
	\mu_n  \asymp  n^{2/(d-1)} \quad \text{ as $n \to +\infty$},
\end{equation}
where $g_n \asymp h_n$ means that there exists constants $c_1$ and $c_2$ such that $0<c_1 \le g_n/h_n \le c_2$ for sufficiently large $n$.
\end{thm}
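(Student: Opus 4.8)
The plan is to pass from the eigenvalue asymptotics to the asymptotics of the counting function $N(\la) := \#\{n : \mu_n \le \la\}$, since for the nondecreasing sequence $\{\mu_n\}$ the estimate \eqref{e:asymp} is equivalent to $N(\la) \asymp \la^{(d-1)/2}$ as $\la \to +\infty$ (from $N(\mu_n)\ge n$ one gets the lower bound for $\mu_n$, and from $N(\la)\le n-1$ for $\la<\mu_n$ one gets the upper bound). First I would recall, following \cite{GMMM11}, that $\DepaD$ is the nonnegative selfadjoint operator generated by the closed quadratic form $a(u)=\int_\paD |\GradpaD u|^2\,\dd\Si$ with form domain $H^1_\DepaD = H^1(\paD)$ (see \eqref{e:HsDe=Hs}). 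This makes the min--max principle available for $N(\la)$ and reduces everything to a two-sided control of $a$ and of the $L^2(\paD)$-norm.

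Next I would localize. Since $\paD$ is compact and $C^{0,1}$, I cover it by a finite atlas and split it into finitely many Lipschitz pieces $\Ga_1,\dots,\Ga_m$, each realized as the graph of a Lipschitz function over a cube $U_i\subset\RR^{d-1}$, so that the associated parametrization $U_i\to\Ga_i$ is bi-Lipschitz and the interfaces between pieces are Lipschitz. Writing $N^{\Dr}_{\Ga_i}(\la)$ and $N^{\Nr}_{\Ga_i}(\la)$ for the counting functions of the form $a$ restricted to $H^1_0(\Ga_i)$, respectively $H^1(\Ga_i)$ (the Dirichlet and Neumann Laplace--Beltrami operators on the pieces), Dirichlet--Neumann bracketing gives
\[
\sum_{i=1}^m N^{\Dr}_{\Ga_i}(\la) \;\le\; N(\la) \;\le\; \sum_{i=1}^m N^{\Nr}_{\Ga_i}(\la),
\]
because the form-domain inclusions $\bigoplus_i H^1_0(\Ga_i)\subseteq H^1(\paD)\subseteq\bigoplus_i H^1(\Ga_i)$ hold for a Lipschitz partition and the direct-sum form agrees with $a$ on these subspaces.

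Then I would transfer each piece to the Euclidean setting by pulling the form $a$ and the $L^2(\Ga_i)$-norm back to $U_i$ through the bi-Lipschitz chart. Here the $C^{0,1}$-regularity enters decisively: for a Lipschitz graph the area element $\sqrt{1+|\nabla\vphi|^2}$ and the tangential gradient are bounded above and below by positive constants a.e., so the pulled-back metric tensor and the density of $\dd\Si$ with respect to Lebesgue measure $\dd y$ are comparable to the Euclidean ones, with constants uniform over the finitely many pieces. By min--max this yields $N^{\Dr/\Nr}_{\Ga_i}(\la)\asymp N^{\Dr/\Nr}_{U_i}(\la)$ (with arguments rescaled by fixed constants), where $N^{\Dr/\Nr}_{U_i}$ are the Dirichlet/Neumann counting functions of the Euclidean Laplacian on the cube $U_i\subset\RR^{d-1}$; for a cube these are elementary and satisfy $N^{\Dr/\Nr}_{U_i}(\la)\asymp\la^{(d-1)/2}$ by the classical Weyl law. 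Summing the finitely many comparisons, both bracketing bounds are $\asymp\la^{(d-1)/2}$, whence $N(\la)\asymp\la^{(d-1)/2}$ and \eqref{e:asymp} follows.

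The main obstacle is the transfer step: one must check that the bi-Lipschitz changes of variables preserve both the Dirichlet and the Neumann form domains (Lipschitz maps map $H^1$ to $H^1$ and $H^1_0$ to $H^1_0$) and that the comparison constants for the metric and the area density can be taken uniform across the finitely many charts. Choosing the model domains $U_i$ to be cubes removes any pathology in the Neumann counting asymptotics and keeps the Euclidean step explicit. The \emph{roughness} of the conclusion --- two-sided constants rather than a sharp leading coefficient --- is intrinsic to the $C^{0,1}$-setting, precisely because the transplanted metric coefficients are only bounded measurable and cannot be controlled more finely.
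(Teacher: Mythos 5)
Your overall strategy (localize, compare the quadratic form with flat Laplacians on rectangles via bi-Lipschitz charts, invoke the classical Weyl law, and convert between eigenvalue and counting-function asymptotics) is the same as the paper's, and your chart/pullback and two-sided metric-comparison steps match the paper's Steps 1--2 closely. The genuine difference is in how the two bounds are organized. For the upper bound on $\mu_n$ the paper does not sum over a partition: it uses a \emph{single} Dirichlet piece $\Ga_m^\al$, since the isometric embedding $H^1_0(\Ga_m^\al)\imb H^1(\paD)$ already gives $\mu_n\le\la_n^{\Dr}$ by min--max. For the lower bound the paper works with the \emph{overlapping} cover $\{\Ga_m\}_{m=1}^M$, forms the direct sum of the Neumann forms on $\bigoplus_m L^2(\Ga_m)$, and uses the crude inequality $\sum_m Q^{\Ga_m}(u)\big/\sum_m\|u\|^2_{L^2(\Ga_m)}\le M\,Q^\paD(u)/\|u\|^2_{L^2(\paD)}$ to get $\la_n^{\Nr}\le M\mu_n$, paying a factor $M$ in the constants. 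Your version instead uses classical Dirichlet--Neumann bracketing over a genuine \emph{partition} of $\paD$, which is cleaner in that it avoids the factor $M$, but it relies on a step the paper deliberately circumvents.

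That step is the one real gap: you assert that $\paD$ can be split into finitely many \emph{disjoint} pieces $\Ga_1,\dots,\Ga_m$, each bi-Lipschitz to a cube, with Lipschitz interfaces. On a general compact $C^{0,1}$-hypersurface this is not automatic. The naive construction --- intersecting $\paD$ with a fine grid of cubes in $\RR^d$ --- produces pieces whose bases in graph coordinates are sublevel sets $\{y': \psi(y')<c\}$ of a merely Lipschitz function; such sets need not be Lipschitz domains or $H^1$-extension domains, and without that the Neumann counting functions $N^{\Nr}_{\Ga_i}(\la)$ on the pieces need not obey $O(\la^{(d-1)/2})$ (Neumann spectra on irregular domains can even fail to be discrete). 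A correct partition can be obtained (e.g.\ via a Lipschitz triangulation of $\paD$), but this requires a real argument, and it is exactly the point the paper's overlapping-cover trick with the factor $M$ is designed to avoid. Either supply that construction, or replace your Neumann bracketing by the paper's restriction-to-an-overlapping-cover inequality, which needs only the graph patches you already have. The Dirichlet half of your bracketing, and your reduction from $N(\la)\asymp\la^{(d-1)/2}$ to $\mu_n\asymp n^{2/(d-1)}$, are fine as stated.
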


We give the proof of Theorem \ref{t:W}
in Appendix \ref{a:W}.  

\begin{rem}
For compact $C^{2, \ep}$-surfaces, \eqref{e:asymp} follows from much more accurate Weyl asymptotics obtained in  \cite{Z99,I00} by methods of microlocal analysis. For an arbitrary compact Lipschitz boundary $\paD$, Theorem \ref{t:W} was conjectured in \cite{K26}. 
\end{rem}

In what follows we use for brevity the notation $|h|_t = \| h \|_{H^t_\DepaD}$.

\begin{lem} \label{l:RandSer}
Let $s,t \in \RR$. Let $\al_n$, $n \in \NN$, be a sequence of $\CC$-valued random variables.
Then the following statements are equivalent:
\item[(i)] $\sum_{n=b_0+1}^{+\infty} \al_n \mu_n^{-s/2} Y_n $ a.s. weakly  converges in $H^t_\DepaD$;
\item[(ii)] $ \sum_{n=b_0+1}^{+\infty} \al_n \mu_n^{-s/2} Y_n $ a.s. strongly converges  in  $H^t_\DepaD$ to a certain random \linebreak $H^t_\DepaD$-vector $\Af_s$;
\item[(iii)] $\sum_{n=b_0+1}^{+\infty} n^{2(t-s)/(d-1)} |\al_n|^2 < \infty $ with probability 1.
\end{lem}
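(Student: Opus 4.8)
The plan is to reduce each of the three conditions, realization by realization, to the single deterministic square-summability criterion in (iii), exploiting the fact that the eigenfunctions $Y_n$ are mutually orthogonal in every space $H^t_\DepaD$.

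First I would record the eigenfunction expansion in $H^t_\DepaD$. The linear homeomorphism $(\DepaD+1)^{t/2}_{H^t \shortto H^0} \in \Hom(H^t_\DepaD, L^2(\paD))$ sends $Y_n$ to $(1+\mu_n)^{t/2}Y_n$; pulling back the $L^2(\paD)$-inner product along it produces an inner product on $H^t_\DepaD$ equivalent to the fixed one, in which $\{Y_n\}_{n\in\NN}$ is orthogonal with $|Y_n|_t^2 = (1+\mu_n)^t$. I would work with this equivalent inner product throughout, which affects none of the three convergence properties. Since $\mu_n \ge \mu_{b_0+1} > 0$ for $n>b_0$, we have $(1+\mu_n)\asymp\mu_n$ uniformly on that range, so $(1+\mu_n)^t \asymp \mu_n^t$; combining this with Theorem \ref{t:W} gives
\[
|\al_n \mu_n^{-s/2} Y_n|_t^2 = |\al_n|^2\, \mu_n^{-s}\, |Y_n|_t^2 \asymp |\al_n|^2\, \mu_n^{\,t-s} \asymp n^{2(t-s)/(d-1)}\, |\al_n|^2 , \qquad n>b_0 .
\]

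Next I would fix $\om \in \Om$ and view the series as a deterministic orthogonal series $\sum_{n>b_0} v_n$ with $v_n = \al_n(\om)\, \mu_n^{-s/2} Y_n$ in the Hilbert space $H^t_\DepaD$. Orthogonality gives the exact identity $|S_N|_t^2 = \sum_{b_0<n\le N} |v_n|_t^2$ for the partial sums $S_N$, so $\sum_{n>b_0} v_n$ converges strongly iff $\sum_{n>b_0} |v_n|_t^2 < \infty$, which by the displayed asymptotics is (up to the finitely many terms before the asymptotics takes effect) exactly the condition $\sum_{n>b_0} n^{2(t-s)/(d-1)} |\al_n(\om)|^2 < \infty$. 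This settles the $\om$-wise equivalence of (ii) and (iii). For the weak direction I would invoke that a weakly convergent sequence in a Hilbert space is norm-bounded (Banach--Steinhaus): if $S_N$ converges weakly in $H^t_\DepaD$ then $\sup_N |S_N|_t < \infty$, and orthogonality turns this into $\sum_{n>b_0}|v_n|_t^2 = \sup_N |S_N|_t^2 < \infty$, i.e.\ (iii); since (iii) in turn forces strong convergence, the three properties are equivalent for each fixed $\om$.

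The passage to the probabilistic statements is then immediate, as the $\om$-sets appearing in (i), (ii), (iii) coincide and hence hold simultaneously with probability $1$. To see that the strong limit is a genuine random vector $\Af_s$, I would note that each $S_N$ is a random $H^t_\DepaD$-vector (a finite linear combination of the fixed $Y_n$ with the random coefficients $\al_n$) and that, $H^t_\DepaD$ being separable, an almost sure strong limit of random vectors is again a random vector. I do not anticipate a genuine obstacle here: granting Theorem \ref{t:W}, the lemma is the classical equivalence ``weak $\Leftrightarrow$ strong convergence $\Leftrightarrow$ square-summability of norms'' for orthogonal series in a Hilbert space, applied pointwise in $\om$; the only points needing a little care are the uniform comparison $(1+\mu_n)\asymp\mu_n$ on $n>b_0$ and the measurability of the limit.
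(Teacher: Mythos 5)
Your proposal is correct and follows essentially the same route as the paper: orthogonality of the $Y_n$ in each $H^t_\DepaD$, the computation $|\mu_n^{-s/2}Y_n|_t^2 \asymp \mu_n^{t-s} \asymp n^{2(t-s)/(d-1)}$ via Theorem \ref{t:W}, the equivalence of weak convergence with boundedness of the partial sums of an orthogonal series, and strong convergence from square-summability, applied $\om$-wise. The only cosmetic difference is that you work with the equivalent inner product pulled back through $(\DepaD+1)^{t/2}$, whereas the paper computes $|Y_n|_t$ directly from the graph norm (for $t\ge 0$) and by duality (for $t<0$); this changes nothing of substance.
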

\begin{proof}
Note that $Y_n$, $n \in \NN$, form an orthogonal basis in every space $H^t_\DepaD$. 
Hence, statement (i) is equivalent to the a.s. uniform boundedness in $H^t_\DepaD$ of the partial sums $S_N = \sum_{n=b_0+1}^N \al_n \mu_n^{-s/2} Y_n$. 	
Calculating $| S_N|^2_t$ via the equalities
\begin{gather*}
| Y_n|_t = (\|\De_\paD^{t/2} Y_n \|_{L^2}^2 + \| Y_n \|^2_{L^2})^{1/2} = (\mu_n^t +1)^{1/2}  \qquad \text{ for $t\ge 0$}, \\
\text{ and } | Y_n|_t = \max_{|g|_{-t} =1} |\<Y_n , g\>_\paD| =  | Y_n |_{-t}^{-1} = (\mu_n^{-t} +1)^{-1/2} \text{ for $t<0$ and $n>b_0$},
\end{gather*} 
one sees from \eqref{e:asymp} that 
$ |\mu_n^{-s/2} Y_n|^2_t \asymp \mu_n^{t-s} \asymp n^{2(t-s)/(d-1)} $ as $n \to + \infty$, and that  statement (iii) is equivalent to $\PP \{\sup_{N\in \NN} | S_N|^2_t <\infty\}= 1$.
This and \eqref{e:w=sR} proves that (iii) is equivalent to  a.s. weak convergence $S_n \to \Af_s$ to
a certain  random $H^t_\DepaD$-vector  $\Af_s$.
The direct calculation of $|\Af_s - S_N|_t$ shows also that (iii) is equivalent  to 
a.s. strong convergence of $ \sum_{n=b_0+1}^{+\infty} \al_n \mu_n^{-s/2} Y_n $.
\end{proof}

The number $\hf_\paD : = s-(d-1)/2$ is an analogue of the Hurst parameter for $\paD$.

\begin{thm}[fractional Gaussian fields] \label{t:FGF}
	(i) For every $t <s-(d-1)/2$, the series \eqref{e:FGF} a.s. strongly  converges in $H^t_\DepaD$ to a random $H^t_\DepaD$-vector, which we denote by $\Xi_s$ and call the fractional Gaussian field  with index $s$ on $\paD$.
\item[(ii)] If $t \ge s-(d-1)/2$, \eqref{e:FGF} diverges weakly in $H^t_\DepaD$  with probability 1 .
\end{thm}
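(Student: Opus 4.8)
The plan is to reduce both statements to Lemma~\ref{l:RandSer} applied with $\al_n = \xi_n$, so that the entire theorem becomes a dichotomy for the almost sure finiteness of the series $\sum_{n>b_0} n^{2(t-s)/(d-1)} |\xi_n|^2$. Writing $\ga := 2(t-s)/(d-1)$, one has $t < s-(d-1)/2 \Leftrightarrow \ga < -1$, and the summands $X_n := n^{\ga}|\xi_n|^2$ are independent, nonnegative, with $\EE X_n = n^{\ga}$ since $\EE \xi_n^2 = 1$. Thus part~(i) amounts to $\sum_n X_n < \infty$ a.s. when $\ga < -1$, and part~(ii) to $\sum_n X_n = +\infty$ a.s. when $\ga \ge -1$.

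For part~(i) I would argue by Tonelli: if $\ga < -1$, then $\EE\big[\sum_{n>b_0} X_n\big] = \sum_{n>b_0} n^{\ga} < \infty$, so $\sum_n X_n < \infty$ almost surely. This is precisely condition~(iii) of Lemma~\ref{l:RandSer}, and the equivalence of (ii) and (iii) there yields a.s. strong convergence of \eqref{e:FGF} in $H^t_\DepaD$ to a random vector, which we name $\Xi_s$.

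For part~(ii) the first step is to show $\sum_n X_n = +\infty$ a.s. when $\ga \ge -1$. The clean tool is the standard Kolmogorov-type criterion that, for independent nonnegative summands, $\sum_n X_n < \infty$ a.s. if and only if $\sum_n \EE[\min\{X_n,1\}] < \infty$. If $\ga \ge 0$ then $X_n \ge |\xi_n|^2$, so $\EE[\min\{X_n,1\}] \ge \EE[\min\{|\xi_1|^2,1\}] > 0$ and the series diverges; if $-1 \le \ga < 0$ then $n^{-\ga} \to \infty$, whence $\EE[\min\{X_n,1\}] \ge n^{\ga}\, \EE[\,|\xi_n|^2 \one_{|\xi_n|^2 \le n^{-\ga}}\,] \gtrsim n^{\ga}$ for large $n$, and again $\sum_n n^{\ga} = \infty$. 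Finally, to upgrade the mere failure of a.s. weak convergence to genuine a.s. weak divergence, I would compute the norms of the partial sums $S_N = \sum_{b_0 < n \le N} \xi_n \mu_n^{-s/2} Y_n$ directly: by the eigenfunction computation in the proof of Lemma~\ref{l:RandSer}, $|S_N|_t^2 = \sum_{b_0 < n \le N} |\xi_n|^2\, |\mu_n^{-s/2} Y_n|_t^2 \asymp \sum_{b_0 < n \le N} n^{\ga} |\xi_n|^2 \to +\infty$ a.s., so $\{S_N\}$ is almost surely unbounded in $H^t_\DepaD$ and therefore cannot converge weakly.

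The step I expect to be the main obstacle is the divergence half in the regime $-1 \le \ga < 0$: here the individual terms $X_n = n^{\ga}|\xi_n|^2$ tend to $0$ almost surely, since the Gaussian tails make $\PP(X_n > \vep)$ summable, so the naive ``the terms do not vanish'' argument used for $\ga \ge 0$ breaks down. One must instead retain the truncated first moments $\EE[\min\{X_n,1\}]$ and use Gaussian concentration to see that truncation at the level $n^{-\ga} \to \infty$ removes only a negligible part of $\EE|\xi_n|^2 = 1$, keeping $\EE[\min\{X_n,1\}]$ of the critical order $n^{\ga}$. Alternatively, since $\{\sum_n X_n < \infty\}$ is a tail event of the independent family $\{\xi_n\}$, Kolmogorov's $0$--$1$ law gives it probability $0$ or $1$, and the moment estimates above rule out probability $1$.
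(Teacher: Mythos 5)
Your proof is correct and follows essentially the same route as the paper: both reduce the theorem to Lemma~\ref{l:RandSer} together with the estimate $|\mu_n^{-s/2}Y_n|_t^2 \asymp n^{2(t-s)/(d-1)}$ coming from Theorem~\ref{t:W}, and both handle the critical regime $-1 \le 2(t-s)/(d-1) \le 0$ by exactly the same truncation of $\xi_n$ at level $1$ to keep $\EE[\min\{1,n^{\ga}|\xi_n|^2\}]$ of order $n^{\ga}$. The only difference is one of packaging: where the paper cites Kahane's Theorems 3.6--3.7 for the a.s.\ convergence/divergence dichotomy of $\sum n^{\ga}|\xi_n|^2$ and for the upgrade to a.s.\ weak divergence, you substitute the elementary Tonelli argument, the Kolmogorov criterion for independent nonnegative summands, and the observation that a.s.\ unbounded partial sums cannot converge weakly --- all of which are valid and make the argument more self-contained.
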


\begin{proof}
Statement (i) follows from the part (ii) $\Lra$ (iii) of Lemma \ref{l:RandSer}  and \cite[Theorem 3.6]{K85}. 

Let us prove (ii). It follows from  \cite[Theorems 3.6 and 3.7]{K85} that  \eqref{e:FGF} diverges weakly  in $H^t_\DepaD$ with probability 1 
if 
\[
 \text{$\textstyle \sum_{n>b_0} \EE (\min \{ 1 , |X_n|_t^2) = \infty$, where $X_n (\om) = \xi_n \mu_n^{-s/2} Y_n$. }
 \]
From Theorem \ref{t:W} and the proof of Lemma \ref{l:RandSer} we see that $ |\mu_n^{-s/2} Y_n|^2_t \asymp  n^{2(t-s)/(d-1)} $ as $n \to \infty$. If $t-s \ge  0$, this implies immediately that $\EE (\min \{ 1 , |X_n|_t^2\})$ is uniformly positive and 
 $ \sum_{n>b_0} \EE (\min \{ 1 , |X_n|_t^2) = \infty$. 
 
 Let  $-1\le p= 2(t-s)/(d-1)  < 0$. For certain constants $N \in \NN$ and $c_1 \in (0,1)$,
\[\textstyle 
\sum_{n>N} \EE (\min \{ 1 , |X_n|_t^2) \ge  \sum_{n>N} \EE (\min \{ 1 , c_1 n^p |\xi_n|^2\}) 
\ge  \sum_{n>N} c_1 n^p \EE |\al_n|^2,
\]
where $\al_n (\om) = \xi_n (\om) $ if $\xi_n (\om) \in [-1,1]$, and $\al_n (\om) = 0$ otherwise.
Then there exists a constant $c_2 >0$ such that $c_2 =  \EE |\al_n|^2$ for all $n$.
Thus, $ \sum_{n>b_0} \EE (\min \{ 1 , |X_n|_t^2) = \infty$. This completes the proof of the theorem.
\end{proof}

\subsection{Random m-dissipative acoustic operators}
\label{s:RandAcOp}

Let $\Xf$ and $\Yf$ be separable Hilbert spaces. The simplest definition of a random operator  $T : \om \mapsto T_\om$ acting from $\Xf$ to $\Yf$ is introduced as follows. 
A function $T:\Om \to \Lc (\Xf,\Yf)$,  $T : \om \mapsto T_\om$, is said to be a  \emph{random bounded operator (from $\Xf$ to $\Yf$)} if $\om \mapsto (T_\om f|g)_{\Yf}$ is a random variable for all $f\in \Xf$ and $g \in \Yf$
(for the case  $\Xf=\Yf$, see, e.g., the monograph  of  Skorohod \cite{S84}). 
In particular, $T: \Om \to \Lc (H^s,H^{-s})$ is a random bounded operator  if and only if $\<Tf,g\>_\paD$ is a random variable for all 
$f,g \in H^s$.
 In these definition and in what follows, $T:\Om \to \Lc (\Xf,\Yf)$ means that $T_\om$ is defined for $\om \in \Om_1$, where $\Om_1$ is a certain subset of $\Om$ of probability 1.



\begin{lem} \label{l:MRandB}
	Suppose \eqref{a:Ck-11}. Let $\vphi:\om \mapsto \vphi_\om$ be a random $H^{-1/2} (\paD)$-vector such that  a.s.
$\vphi \in \MM [H^{1/2} (\paD)  \sto H^{-1/2} (\paD)]$. Then $\mul_\vphi^{H^{1/2} \sto H^{-1/2} }$ is a random bounded operator from $H^{1/2} (\paD)$ to $H^{-1/2} (\paD)$.
\end{lem}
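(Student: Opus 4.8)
The plan is to reduce everything to the scalar criterion stated just before the lemma: a map $T:\Om \to \Lc(H^{1/2}(\paD), H^{-1/2}(\paD))$ is a random bounded operator exactly when $\om \mapsto \<T_\om f, g\>_\paD$ is a random variable for every $f, g \in H^{1/2}(\paD)$. So it suffices to establish this measurability for $T_\om = \mul_{\vphi_\om}^{H^{1/2} \sto H^{-1/2}}$, working on the full-measure set $\Om_1 \subseteq \Om$ where $\vphi_\om \in \MM[H^{1/2}(\paD) \sto H^{-1/2}(\paD)]$ (per the paper's convention, operators need only be defined on such a set).

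First I would handle test functions $f, g \in H^k(\paD)$. By Proposition \ref{p:HkBAl} the product $\overline f g$ again lies in $H^k(\paD) \imb H^{1/2}(\paD)$, and the defining relation in Definition \ref{d:MMs1s2} gives, for $\om \in \Om_1$,
\[
\<\mul_{\vphi_\om}^{H^{1/2} \sto H^{-1/2}} f, g\>_\paD = \<\vphi_\om, \overline f g\>_\paD .
\]
For a fixed $h := \overline f g \in H^{1/2}(\paD)$, the functional $\ell_h:\psi \mapsto \<\psi, h\>_\paD$ is linear and bounded on $H^{-1/2}(\paD)$, i.e. $\ell_h \in (H^{-1/2}(\paD))'$. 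Since $\vphi$ is a random $H^{-1/2}(\paD)$-vector and $H^{-1/2}(\paD)$ is a separable Hilbert space, the characterization of random vectors of \cite{DZ92} underlying \eqref{e:w=sR} shows that $\om \mapsto \ell_h(\vphi_\om) = \<\vphi_\om, \overline f g\>_\paD$ is a random variable. Hence $\om \mapsto \<\mul_{\vphi_\om}^{H^{1/2} \sto H^{-1/2}} f, g\>_\paD$ is measurable for all $f, g \in H^k(\paD)$.

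Next I would pass from $H^k(\paD)$ to all of $H^{1/2}(\paD)$ by density together with the a.s.\ boundedness of the multiplication operators. Fixing $f, g \in H^{1/2}(\paD)$, I choose $f_m, g_m \in H^k(\paD)$ with $f_m \to f$ and $g_m \to g$ in $H^{1/2}(\paD)$, which is possible because the embedding $H^k(\paD) \imb H^{1/2}(\paD)$ is dense. For each fixed $\om \in \Om_1$ the operator $\mul_{\vphi_\om}^{H^{1/2} \sto H^{-1/2}}$ is bounded, so writing the difference as $\<\mul_{\vphi_\om}(f_m - f), g_m\>_\paD + \<\mul_{\vphi_\om} f, g_m - g\>_\paD$ and estimating each term by the operator norm shows that $\<\mul_{\vphi_\om}^{H^{1/2} \sto H^{-1/2}} f_m, g_m\>_\paD \to \<\mul_{\vphi_\om}^{H^{1/2} \sto H^{-1/2}} f, g\>_\paD$ as $m \to \infty$. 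Thus the desired map is an a.s.\ pointwise limit of the random variables produced in the previous step, and since $(\Om,\Fc,\PP)$ is complete, it is itself a random variable. Invoking the scalar criterion then yields that $\mul_\vphi^{H^{1/2} \sto H^{-1/2}}$ is a random bounded operator.

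The only genuinely delicate point — and thus the main (mild) obstacle — is that the operator norm $\|\mul_{\vphi_\om}^{H^{1/2} \sto H^{-1/2}}\|$ depends on $\om$, so the limit in $m$ cannot be taken uniformly in $\om$. I therefore argue pointwise for each fixed $\om \in \Om_1$ and then appeal to completeness of the probability space to conclude that the almost-sure pointwise limit of random variables remains measurable; trying instead to view $\vphi$ directly as a random $\MM$-vector would force one to compare the Borel structures of $\MM[H^{1/2} \sto H^{-1/2}]$ and $H^{-1/2}(\paD)$, which the weak (sesquilinear-form) route neatly avoids.
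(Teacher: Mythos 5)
Your proposal is correct and follows essentially the same route as the paper's proof: establish measurability of $\om \mapsto \<\vphi_\om, \ove{g_1} g_2\>_\paD$ for $g_1,g_2 \in H^k(\paD)$ via \eqref{e:w=sR}, then pass to arbitrary $g_1,g_2 \in H^{1/2}(\paD)$ by density and the a.s.\ boundedness of $\mul_{\vphi_\om}^{H^{1/2}\sto H^{-1/2}}$. The additional details you supply (the Banach algebra property ensuring $\ove f g \in H^k$, and the appeal to completeness of $(\Om,\Fc,\PP)$ for the pointwise limit) are exactly the points the paper leaves implicit.
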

\begin{proof}
By \eqref{e:w=sR},  
	$\<\mul_\vphi^{H^{1/2} \sto H^{-1/2} }g_1,g_2\>_\paD = \<\vphi, \ove g_1 g_2\>$ is a random variable for all $g_1,g_2 \in H^k$. Approximating arbitrary $g_1,g_2 \in H^{1/2}$ by $H^k$-functions and using the assumption that 
	$\vphi \in \MM [H^{1/2}  \sto H^{-1/2} ]$ a.s.,
	one gets that $\<\mul_\vphi^{H^{1/2} \sto H^{-1/2} }g_1,g_2\>_\paD $ is a random variable for all $g_1,g_2 \in H^{1/2} $.
\end{proof}

The definition of random bounded operators can be applied to resolvents of randomized unbounded operators as follows (see, e.g., \cite{KM82,K26}). An operator-valued function $A: \om \mapsto A_\om$, $\om \in \Om$,  with the property that $A_\om : \dom A_\om \subseteq \Xf \to \Xf$ is a.s. an m-dissipative operator (selfadjoint operator) is said  to be a \emph{random operator in the resolvent sense}  if $(A - \la)^{-1}$ is a random bounded operator in $\Xf$ for every $\la \in \CC_+$ (resp., for every $\la \in \CC \setminus \RR$). In this case, we say in short that $A$ is random m-dissipative (resp., random selfadjoint). 
It is easy to see \cite{K26} that 
\begin{gather}\label{e:Rsa}
	\text{if $A$ is random m-dissipative and $A=A^*$ a.s., then $A$ is random selfadjoint.}
\end{gather}

\begin{thm} \label{t:RandM-dis} 
	Let $\Zc:\om \mapsto \Zc_\om$ be a random bounded operator
	from $H^{1/2} (\paD)$ to $H^{-1/2} (\paD)$ such that $\Zc$ is a.s. accretive. Then the acoustic operator $\A_\Zc$ is  random m-dissipative (in the resolvent sense).
\end{thm}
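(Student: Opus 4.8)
The plan is to separate two independent assertions: that $\A_{\Zc_\om}$ is m-dissipative for almost every $\om$, and that its resolvent depends measurably on $\om$. For the first, I would note that on a set $\Om_1 \subseteq \Om$ of probability $1$ the operator $\Zc_\om \in \Lc(H^{1/2}(\paD), H^{-1/2}(\paD))$ is accretive. Being everywhere defined and bounded, $\Zc_\om$ is in particular densely defined and admits no proper extension, hence is maximal accretive. Thus condition (iv) of Theorem \ref{t:DtN-GIBC} holds, and that theorem yields that $\A_{\Zc_\om}$ is m-dissipative for every $\om \in \Om_1$. This is the routine part.

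For the resolvent, fix $\la \in \CC_+$. The plan is to invoke the Krein-type resolvent formula for acoustic operators from \cite{EK22,K26}: there exist $\om$-independent bounded operators $R_0(\la) \in \Lc(\LL^2_{\ab,\beta}(\D))$, $\Gamma(\la) \in \Lc(H^{1/2}(\paD), \LL^2_{\ab,\beta}(\D))$, $\wt\Gamma(\la) \in \Lc(\LL^2_{\ab,\beta}(\D), H^{-1/2}(\paD))$, and a Weyl function $M(\la) \in \Lc(H^{1/2}(\paD), H^{-1/2}(\paD))$ such that
\begin{equation*}
(\A_{\Zc_\om} - \la)^{-1} = R_0(\la) + \Gamma(\la)\,(\Zc_\om - M(\la))^{-1}\,\wt\Gamma(\la), \qquad \om \in \Om_1 .
\end{equation*}
The standard properties of the Weyl function (cf. \cite{EK22,K26}) guarantee, since $\im\la>0$, a coercivity estimate $\re\<-M(\la)u,u\>_\paD \ge c_\la \|u\|_{H^{1/2}}^2$ with $c_\la>0$; combined with $\re\<\Zc_\om u, u\>_\paD \ge 0$ this makes the sesquilinear form $b_\om(u,v) := \<(\Zc_\om - M(\la))u, v\>_\paD$ bounded and coercive on $H^{1/2}(\paD)$ uniformly in $\om \in \Om_1$, so that $(\Zc_\om - M(\la))^{-1} \in \Hom(H^{-1/2}(\paD), H^{1/2}(\paD))$ with an $\om$-independent norm bound. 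Consequently, by \eqref{e:w=sR} it suffices to show that $\om \mapsto (\Zc_\om - M(\la))^{-1}$ is a random bounded operator from $H^{-1/2}(\paD)$ to $H^{1/2}(\paD)$; composing on both sides with the fixed bounded operators $\Gamma(\la)$, $\wt\Gamma(\la)$ and adding the fixed $R_0(\la)$ then shows that $((\A_{\Zc_\om}-\la)^{-1}f \mid g)_{\LL^2_{\ab,\beta}}$ is a random variable for all $f,g$, which is exactly random m-dissipativity.

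The main obstacle is this measurability of the inverse, because $\Zc$ is only weakly operator measurable ($\<\Zc_\om g_1, g_2\>_\paD$ is measurable for fixed $g_1,g_2 \in H^{1/2}(\paD)$) while $\Lc(H^{1/2},H^{-1/2})$ is non-separable in the operator norm, so one cannot invoke norm-continuity of inversion. I would circumvent this by a Galerkin approximation exploiting the uniform coercivity above. Fix $\psi,\chi \in H^{-1/2}(\paD)$ and an orthonormal basis $\{e_j\}_{j\in\NN}$ of $H^{1/2}(\paD)$ consisting of $\DepaD$-eigenfunctions (so $e_j \in H^\infty_\DepaD$). For each $N$, let $u_\om^N = \sum_{j=1}^N c_j^N(\om) e_j$ solve the finite system $b_\om(u_\om^N, e_i) = \<\psi, e_i\>_\paD$, $1\le i \le N$; its matrix has entries $\<\Zc_\om e_j, e_i\>_\paD - \<M(\la)e_j, e_i\>_\paD$ that are measurable in $\om$ (the first term by hypothesis, the second $\om$-independent), is invertible by coercivity, and Cramer's rule shows the coordinates $c_j^N(\om)$ are measurable; hence $\<u_\om^N, \chi\>_\paD$ is measurable. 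Uniform coercivity gives a C\'ea-type error bound, so $u_\om^N \to (\Zc_\om - M(\la))^{-1}\psi$ in $H^{1/2}(\paD)$ for every $\om \in \Om_1$, and therefore $\<u_\om^N, \chi\>_\paD \to \<(\Zc_\om - M(\la))^{-1}\psi, \chi\>_\paD$. As a pointwise limit of measurable functions, the latter is measurable, which completes the argument. The selfadjoint refinement is not needed here since only m-dissipativity is claimed; it would follow from \eqref{e:Rsa}.
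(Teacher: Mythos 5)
Your proposal is correct in its overall architecture but follows a genuinely different route from the paper. The deterministic half (bounded everywhere-defined accretive $\Rightarrow$ maximal accretive $\Rightarrow$ Theorem \ref{t:DtN-GIBC}(iv)) matches the paper. For the measurability of the resolvent, the paper does not use a Krein formula at all: it passes to $\wt\Zc = V^\cross \Zc V$ in $L^2(\paD)$, observes that $\wt\Zc + I$ is a.s. a linear homeomorphism, invokes Han\v{s}'s lemma (Proposition \ref{p:HTh}) to get measurability of $(\wt\Zc+I)^{-1}$, forms the random contraction $K = (\wt\Zc - I)(\wt\Zc+I)^{-1}$, and then cites \cite[Theorem 2.8]{K26}, which asserts random m-dissipativity for the contraction-parametrized family \eqref{e:mdisBC}. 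Your route instead pushes the whole burden onto a Krein-type resolvent formula and a coercivity property of the Weyl function; neither is stated or proved anywhere in this paper, and for the m-boundary tuple of Example \ref{ex:acute} they would themselves require a nontrivial argument, so as written this is the load-bearing unverified input (the paper's citation of \cite[Theorem 2.8]{K26} plays the analogous role, but that is a result the author actually proved elsewhere). There is also a sign slip: with the convention $\Zc\Ga_0 y = \ii\Ga_1 y$ the operator to invert is $\Zc_\om - \ii M(\la)$, and the coercivity comes from $\im\<M(\la)u,u\>_\paD = \im\la\,\|\gamma(\la)u\|^2 \gtrsim \|u\|_{H^{1/2}}^2$ combining with $\re\<\Zc_\om u,u\>_\paD \ge 0$; the inequality $\re\<-M(\la)u,u\>_\paD \ge c_\la\|u\|_{H^{1/2}}^2$ you wrote is not what the Weyl function provides. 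What your approach buys, if these inputs are supplied, is a self-contained measurability argument: your Galerkin/C\'ea scheme is in effect a constructive proof of the special case of Han\v{s}'s lemma needed here, and correctly handles the fact that only weak operator measurability of $\Zc$ is available (note the boundedness constant of $b_\om$ need not be uniform in $\om$, but pointwise convergence of the Galerkin approximants for each fixed $\om$ is all that measurability of the limit requires). The paper's route is shorter because it delegates both the inversion measurability and the resolvent measurability to previously established results.
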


This theorem is proved in Appendix \ref{s:ProofA} as a corollary of the results of \cite{K26} on general random m-dissipative  boundary conditions.

\begin{thm} \label{t:RandSM}
	Suppose \eqref{a:Ck-11}. 	Let $\A_\Zc$ be associated with a randomized impedance boundary condition $\Zc \ga_0 (p) =  \gan (\vbf)$, where 
	 $\Zc = \mul_\zeta^{H^{1/2} \sto H^{-1/2} }$ and $\zeta$ is a random $H^{-k} (\paD)$-vector such that a.s.
	   $\re \zeta \succeq 0$ and  $\zeta \in \MM [H^{1/2} (\paD) \sto H^{-1/2} (\paD)]$.
	Then:
\item[(i)]  $\Zc$ is a random bounded operator and $\Zc$ is a.s. accretive.
\item[(ii)] The operators $\A_\Zc$ and $\A_\Zc |_{\GG_{\ab,\beta}}$ are  random m-dissipative.
\item[(iii)] If $\re \zeta = 0$ a.s., then 
$\A_\Zc$ and $\A_\Zc |_{\GG_{\ab,\beta}}$ are random selfadjoint operators.
\end{thm}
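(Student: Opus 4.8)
The plan is to assemble the statement from the operator-theoretic and measurability results already in place, handling the three parts in order; the only genuinely new work is measurability bookkeeping, the rest being citation. For part (i), I would first upgrade $\zeta$ from a random $H^{-k}(\paD)$-vector to a random $H^{-1/2}(\paD)$-vector. By \eqref{e:Mds1s2s2s1} the hypothesis gives $\zeta \in H^{-1/2}(\paD)$ a.s.; and since $\zeta$ is a random $H^{-k}$-vector, $\<\zeta,g\>_\paD$ is a random variable for every $g \in H^k(\paD)$ by \eqref{e:w=sR}. For arbitrary $g \in H^{1/2}(\paD)$ I would choose $g_m \in H^k(\paD)$ with $g_m \to g$ in $H^{1/2}$ (density of $H^k$ in $H^{1/2}$); on the probability-one event where $\zeta \in H^{-1/2}$ one has $\<\zeta,g_m\>_\paD \to \<\zeta,g\>_\paD$, so $\<\zeta,g\>_\paD$ is an a.s. limit of random variables, hence a random variable. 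By \eqref{e:w=sR} again $\zeta$ is a random $H^{-1/2}$-vector, and Lemma \ref{l:MRandB} then shows $\Zc=\mul_\zeta^{H^{1/2}\sto H^{-1/2}}$ is a random bounded operator. The a.s. accretivity of $\Zc$ follows from $\re\zeta \succeq 0$: by \eqref{e:reMul=re} and Lemma \ref{l:M>0} the condition $\re\zeta \succeq 0$ is equivalent to nonnegativity of $\re \mul_\zeta^{H^{1/2}\sto H^{-1/2}}$, which by Remark \ref{r:ReM>0} is equivalent to accretivity of $\Zc$; since these hold a.s., so does accretivity of $\Zc$.

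For part (ii), the assertion for $\A_\Zc$ is immediate from part (i) and Theorem \ref{t:RandM-dis}. For $\A_\Zc|_{\GG_{\ab,\beta}}$ I would exploit the deterministic orthogonal reduction \eqref{e:Red}, namely $\A_\Zc = 0 \oplus (\A_\Zc|_{\GG_{\ab,\beta}})$ along the fixed splitting \eqref{e:LabDec}. The a.s. m-dissipativity of $\A_\Zc|_{\GG_{\ab,\beta}}$ follows from the equivalence stated right after \eqref{e:CRAZ} (namely $\A_\Zc$ is m-dissipative iff $\A_\Zc|_{\GG_{\ab,\beta}}$ is), while for $\la \in \CC_+$ the resolvent decomposes as $(\A_\Zc-\la)^{-1} = (-\la)^{-1} I \oplus (\A_\Zc|_{\GG_{\ab,\beta}}-\la)^{-1}$. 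Thus $(\A_\Zc|_{\GG_{\ab,\beta}}-\la)^{-1}$ is the compression of the random bounded operator $(\A_\Zc-\la)^{-1}$ to the deterministic subspace $\GG_{\ab,\beta}$, and compressing by fixed projections preserves the random-bounded-operator property; hence $\A_\Zc|_{\GG_{\ab,\beta}}$ is random m-dissipative.

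For part (iii), assume $\re \zeta = 0$ a.s. By Remark \ref{r:ReM>0} the equality $\re\zeta = 0$ is equivalent to $\A_\Zc = \A_\Zc^*$, so $\A_\Zc$ is a.s. selfadjoint; combined with the random m-dissipativity from (ii) and the implication \eqref{e:Rsa}, this yields that $\A_\Zc$ is random selfadjoint. The reduction \eqref{e:Red} again transfers the conclusion, since $\A_\Zc = \A_\Zc^*$ is equivalent to $\A_\Zc|_{\GG_{\ab,\beta}} = (\A_\Zc|_{\GG_{\ab,\beta}})^*$ (the summand $0$ being selfadjoint), and then (ii) together with \eqref{e:Rsa} gives that $\A_\Zc|_{\GG_{\ab,\beta}}$ is random selfadjoint. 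The main obstacle is purely measurability-theoretic: one must verify that the a.s.-defined restriction inherits the random-operator property through the reduction \eqref{e:Red}. This hinges on the fact that the subspaces in \eqref{e:LabDec} and their orthogonal projections are deterministic (independent of $\om$), so that compressing the random bounded resolvent $(\A_\Zc-\la)^{-1}$ by these fixed projections again produces a random bounded operator, identified with $(\A_\Zc|_{\GG_{\ab,\beta}}-\la)^{-1}$ by the orthogonal decomposition.
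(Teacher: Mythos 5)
Your proof is correct and follows essentially the same route as the paper: Lemma \ref{l:MRandB} and Remark \ref{r:ReM>0} for part (i), then Theorem \ref{t:RandM-dis}, the reduction \eqref{e:Red}, and \eqref{e:Rsa} for parts (ii)--(iii). The extra detail you supply (upgrading $\zeta$ to a random $H^{-1/2}(\paD)$-vector by approximation on the complete probability space, and noting that compression by the deterministic projections of \eqref{e:LabDec} preserves randomness of the resolvent) is a legitimate filling-in of steps the paper leaves implicit.
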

\begin{proof}
	Lemma \ref{l:MRandB} implies that $\Zc:\Om \to \Lc (H^{1/2},H^{-1/2})$ is a random bounded operator. Since $\re \zeta \succeq 0$  a.s., Remark \ref{r:ReM>0}
	yields that $\Zc$ is a.s. accretive. This proves statement (i). 	
Statements (ii) 	and (iii) follow from Theorem \ref{t:RandM-dis} and \eqref{e:Rsa}.
\end{proof}

\begin{cor}[randomization by FGFs] \label{c:FGF}
	Let $d=2$, $s>0$, and $c \in \RR$. Let $\A_\Zc$ be associated with $\Zc  \ga_0 (p) =  \gan (\vbf)$, where
	\begin{gather*} 
	\text{$\Zc = \ii c \mul_{\Xi_s}^{H^{1/2} \sto H^{-1/2}} + \Yc$ and the FGF 
	\quad	$\Xi_s$ is defined by  Theorem \ref{t:FGF},}
	\end{gather*}
whereas	$\Yc: \Om \to \Lc (H^{1/2} (\paD),H^{-1/2} (\paD)) $ is a random bounded operator 
such that $\Yc$ is a.s. accretive.
Then:
\item[(i)] $\A_\Zc$ and $\A_\Zc |_{\GG_{\ab,\beta}}$ are  random m-dissipative operators.
\item[(ii)] $\PP \{\A_\Zc |_{\GG_{\ab,\beta}} \text{has compact resolvent}\} = \PP \{\Yc \in \Sf_\infty (H^{1/2}, H^{-1/2} )\}$
and \\ $\PP \{\si (\A_\Zc |_{\GG_{\ab,\beta}})=\si_\disc (\A_\Zc |_{\GG_{\ab,\beta}})\}  \ge \PP \{\Yc \in \Sf_\infty (H^{1/2}, H^{-1/2} )\}$.
\item[(iii)] If $\Yc^\cross = - \Yc$ a.s. (e.g., if $\Yc = 0$),
then $\A_\Zc$ and $\A_\Zc |_{\GG_{\ab,\beta}}$ are random selfadjoint.
\end{cor}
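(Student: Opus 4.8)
The plan is to verify that the random operator $\Zc = \ii c\,\mul_{\Xi_s}^{H^{1/2}\sto H^{-1/2}}+\Yc$ is a random bounded, a.s.\ accretive operator, and then to feed it into Theorem~\ref{t:RandM-dis}, the orthogonal reduction \eqref{e:Red}, the compact-resolvent criterion \eqref{e:CRAZ}, and the selfadjointness clause of Theorem~\ref{t:DtN-GIBC}. First I would pin down the regularity of the field. Since $d=2$ gives $(d-1)/2=1/2$, Theorem~\ref{t:FGF}(i) says $\Xi_s$ converges a.s.\ strongly in $H^t_\DepaD$ for every $t<s-1/2$. For any $s>0$ the interval $(\max\{0,1/2-s\},\,1/2)$ is nonempty, so I may fix $s'\in(0,1/2)$ with $-s'<s-1/2$; then $\Xi_s$ is a.s.\ a random $H^{-s'}(\paD)$-vector. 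As $\Xi_s$ is a real distribution (the $\xi_n$ are real and the $Y_n$ real-valued), Theorem~\ref{t:H-sd=2} gives a.s.\ $\Xi_s\in\MM^{\Sf_\infty}[H^{1/2}\sto H^{-1/2}]$, so $\mul_{\Xi_s}^{H^{1/2}\sto H^{-1/2}}$ is a.s.\ compact; using $H^{-s'}\imb H^{-1/2}$ together with Lemma~\ref{l:MRandB} it is moreover a random bounded operator, and adding the random bounded $\Yc$ shows that $\Zc$ is random bounded.

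Next I would compute the real part. Realness of $\Xi_s$ makes $\mul_{\Xi_s}^{H^{1/2}\sto H^{-1/2}}$ a $\cross$-selfadjoint operator, and with $c\in\RR$ this yields $(\ii c\,\mul_{\Xi_s})^\cross=-\ii c\,\mul_{\Xi_s}$, whence $\re\Zc=\re\Yc$ (cf.~\eqref{e:reMul=re}). Since $\Yc$ is a.s.\ accretive, so is $\Zc$, and Theorem~\ref{t:RandM-dis} gives statement~(i) for $\A_\Zc$; the reduction \eqref{e:Red} transports random m-dissipativity to $\A_\Zc|_{\GG_{\ab,\beta}}$, whose resolvent is the compression of the (random bounded) resolvent of $\A_\Zc$ to $\GG_{\ab,\beta}$.

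For (ii) I would apply \eqref{e:CRAZ}: on the a.s.\ event where $\A_\Zc$ is m-dissipative, $\A_\Zc|_{\GG_{\ab,\beta}}$ has compact resolvent iff $\Zc\in\Sf_\infty(H^{1/2},H^{-1/2})$; since $\ii c\,\mul_{\Xi_s}$ is a.s.\ compact and $\Sf_\infty$ is a closed subspace, a.s.\ $\{\Zc\in\Sf_\infty\}=\{\Yc\in\Sf_\infty\}$, which gives the probability identity. The ``in particular'' follows because a deterministic nonnegative $\vphi\in\MM^{\Sf_\infty}[H^{1/2}\sto H^{-1/2}]$ makes $\Yc=\mul_\vphi^{H^{1/2}\sto H^{-1/2}}$ compact and accretive (Lemma~\ref{l:M>0}), so the resolvent is a.s.\ compact and $\si=\si_\disc$ a.s.\ by \eqref{e:SiDisc1}. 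For (iii), $\Yc^\cross=-\Yc$ a.s.\ combined with $(\ii c\,\mul_{\Xi_s})^\cross=-\ii c\,\mul_{\Xi_s}$ gives $\Zc^\cross=-\Zc$ a.s., so the ``Moreover'' part of Theorem~\ref{t:DtN-GIBC} yields $\A_\Zc=\A_\Zc^*$ a.s., and \eqref{e:Rsa} upgrades random m-dissipativity to random selfadjointness for both $\A_\Zc$ and its part.

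The hard part will be the regularity bookkeeping of the first step — ensuring that for \emph{every} $s>0$ the field $\Xi_s$ lands in some $H^{-s'}$ with $s'<1/2$, which is precisely what makes $\mul_{\Xi_s}$ simultaneously compact and an admissible $H^{1/2}\to H^{-1/2}$ multiplier — together with the measurability of the event $\{\Yc\in\Sf_\infty\}$ implicit in (ii), which I would handle through the separability of $\Sf_\infty(H^{1/2},H^{-1/2})$ and the equivalence \eqref{e:w=sR} of weak and strong randomness.
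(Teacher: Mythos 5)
Your proposal is correct and follows essentially the same route as the paper: regularity of $\Xi_s$ via Theorem \ref{t:FGF} and Theorem \ref{t:H-sd=2}, realness of $\Xi_s$ giving $(\ii c\,\mul_{\Xi_s})^\cross=-\ii c\,\mul_{\Xi_s}$ and hence accretivity of $\Zc$, then Theorem \ref{t:RandM-dis} with the reduction \eqref{e:Red} for (i), the compactness criterion \eqref{e:CRAZ}/Theorem \ref{t:AZG} for (ii), and Theorem \ref{t:DtN-GIBC} with \eqref{e:Rsa} for (iii). The only cosmetic difference is that you invoke Lemma \ref{l:MRandB} directly where the paper routes the same step through Theorem \ref{t:RandSM}(i).
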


\begin{proof}
Let $s>0$ and $c \in \RR$. Theorem \ref{t:FGF} 	implies that $\Xi_s$
is a random $H^t (\paD)$-vector for $t \in (-1/2, -1/2+s)$.  Theorem \ref{t:H-sd=2} yields  
$\Xi_s \in \MM^{\Sf_\infty} [H^{1/2} \sto H^{-1/2} ]$, and so the operator $T := \ii c \mul_{\Xi_s}^{H^{1/2} \sto H^{-1/2}}$ is a.s. compact.
Formula \eqref{e:FGF} gives  $\im \Xi_s =0$ a.s., and so,
$\re (\ii c \Xi_s) =0$ with probability 1. 
Using Remark \ref{r:ReM>0},  we conclude that $T $ is a.s. accretive, moreover, a.s. $T = - T^\cross$.

(i) By Theorem \ref{t:RandSM} (i), $T$ is a random bounded operator from from $H^{1/2} $ to $H^{-1/2} $, and so is $\Zc = T +\Yc$.
Theorem \ref{t:RandM-dis}  implies that $\A_\Zc$ is  random m-dissipative operator. The  decomposition \eqref{e:Red} 
yields that so is also $\A_\Zc |_{\GG_{\ab,\beta}}$.

(ii) Clearly the events $\{\Yc \in \Sf_\infty (H^{1/2}, H^{-1/2}) \}$ and
$\{\Zc \in \Sf_\infty (H^{1/2}, H^{-1/2}) \}$ coincide up to a set of probability $0$.
Theorem \ref{t:AZG} implies statement (ii).

(iii) Since $T = - T^\cross$, statement (iii) follows Theorem \ref{t:DtN-GIBC} and \eqref{e:Rsa}.
\end{proof}

Taking in Corollary \ref{c:FGF} a random bounded operator $\Yc= \mul_{\vphi}^{H^{1/2} \sto H^{-1/2}}$ with a random $H^{-1/2}$-vector $\vphi$ satisfying $\re \vphi \succeq 0$,  $\vphi \in \MM^{\Sf_\infty} [H^{1/2}  \sto H^{-1/2} ]$, 
and additionally $\re \vphi \neq 0$ a.s., we get a model $\A_\Zc$ for a random lossy  resonator in 
the sense that $\A_\Zc$ is a random m-dissipative acoustic operator such that $\si (\A_\Zc)\setminus\{0\}$ consists of isolated eigenvalues of finite algebraic multiplicity  and additionally $\A_\Zc \neq \A_\Zc^*$ a.s. (i.e., a.s. there exists leakage of energy into the 
surrounding). An explicit example of this type is produced if  $\vphi$ is any of deterministic measures $\Phi_\cross \mu$ of 
Example \ref{ex:C}.

Another  example is provided by Theorem \ref{t:ExFGF}.

\begin{rem} \label{ex:ExFGF}
Let us explain how Theorem \ref{t:ExFGF} follows from the results of this section. 
Since $c \in \RR$ and since $Y_n$ are nonnegative for $1 \le n \le b_0$, we see that  $\re \zeta \succeq 0$ and $\re \zeta \neq 0$ with probability $1$.
Lemma \ref{l:RandSer} and Theorem \ref{t:FGF} imply that $\zeta$ is a random $H^{-1/2+\ep}$-vector for a  small enough $\ep>0$. 
By Theorem  \ref{t:H-sd=2},
$H^{-1/2+\ep} \imb \MM^{\Sf_\infty} [H^{1/2} \sto H^{-1/2}]$, and so, $\zeta $ is a random $\MM^{\Sf_\infty} [H^{1/2} \sto  H^{-1/2}]$-vector (note that $\MM^{\Sf_\infty} [H^{1/2} \sto  H^{-1/2}]$ is a separable Banach space by Lemma \ref{l:MMBsp}). Thus, 
Theorem \ref{t:H-sd=2}
and Corollary \ref{c:FGF} imply statements (i)-(iii) of Theorem \ref{t:ExFGF}.
\end{rem}

\begin{ex} \label{e:Ess}
Another class of examples for Corollary \ref{c:FGF}, which involves a  randomization by 
$\ii c \Xi_s$ of a nonlocal deterministic operator $\Yc$, 
emerges in the case where $\Yc \in \Lc (H^{1/2},H^{-1/2})$ is an extension by continuity of 
$c_2 (\De_\paD+c_1)^{t/2}$ 
with $ t\le 1$, $c_1>0$, and $c_2>0$. In the case $t<1$, $\A_\Zc |_{\GG_{\ab,\beta}}$ is a  random m-dissipative operator with a.s. compact resolvent and purely discrete spectrum.
Besides, $\A_\Zc \neq \A_\Zc^*$ with probability 1. In the case $t=1$, the operator $\Yc$ does not belong to 
$\Sf_\infty (H^{1/2},H^{-1/2})$, and so,  \eqref{e:CRAZ} yields that the resolvent of 
$\A_\Zc |_{\GG_{\ab,\beta}}$ is not compact a.s. (we conjecture that $\si_\ess (\A_\Zc |_{\GG_{\ab,\beta}}) \neq \varnothing$ a.s. in this case). If $\Yc = \pm \ii  c_3 (\De_\paD+ c_1 )^{1/2}_{H^{1/2} \shortto H^{-1/2}}$ with $c_3>0$ and $c_1 \ge 0$, then $\A_\Zc |_{\GG_{\ab,\beta}}$ is a random selfadjoint operator such that a.s. $\si_\ess (\A_\Zc |_{\GG_{\ab,\beta}}) \neq \varnothing$.
\end{ex}

\section{Conclusion, discussion, and additional remarks}
\label{s:dis}

\subsection{Conclusion and discussion}

We have shown in Theorem \ref{t:H-sd=2} that, for the 2-dimensional case $d=2$ and arbitrarily small $\ep>0$,
generalized functions $\zeta \in H^{-1/2+\ep} (\paD)$ such that $\re \zeta$ is a positive measure
generate m-dissipative impedance boundary conditions $\zeta  \ga_0 (p) =  \gan (\vbf)$, and that $\si (\Ac_\Zc) \setminus \{0\} = \si_\disc (\Ac_\Zc)$ for the associated acoustic operators $\Ac_\Zc$.
This allowed us to use fractional Gaussian fields $\Xi_s$
with arbitrarily small $s>0$ for the randomization of $\zeta$. However, this does not cover the limiting case of the white noise $\Xi_0$, which, in our opinion, may require additional tools (e.g., a renormalization \cite{GUZ20}).

The cases $d \ge 3$ and $\zeta \in H^{-1/2} (\paD)$ are treated to some extent by Theorems \ref{t:Inter}, \ref{t:Macc}, \ref{t:InterD},
and \ref{t:Hacc}. Theorems \ref{t:Inter} and \ref{t:InterD} require only nonnegativity of the real part $\re \zeta$ of the impedance coefficient and establish the existence of  certain restrictions $\wt \Zc$ of the multiplication operator $\mul_\zeta^{1/2,-k}$
that generate an m-dissipative boundary condition  $\wt \Zc  \ga_0 (p) =  \gan (\vbf)$. However, there is no control on the uniqueness of such an implicit restricted multiplication operator $\wt \Zc$.   Theorems  \ref{t:Macc} and \ref{t:Hacc} work with explicit multiplication operators, but shift the difficulty to the verification of the condition $\ove{\mul_\zeta^{k,-1/2}} = \mul_\zeta^{1/2,-k}$,
which is easy only in the case $\zeta \in  \MM [H^{1/2} (\paD)  \sto H^{-1/2} (\paD) ]$ (see Remarks  \ref{r:ReM>0} and \ref{r:Mp12-12}).

The verification of the condition $\zeta \in  \MM [H^{1/2} (\paD)  \sto H^{-1/2} (\paD) ]$ is also a nontrivial task,
but there is a hope that  it can be addressed by the adaptation to Lipschitz boundaries of the $H^{1/2} (\RR^n)$-capacity approach to $\MM [H^{1/2} (\RR^n)  \sto H^{-1/2} (\RR^n) ]$, which was developed by Maz'ya \& Verbitsky \cite{MV95,MV04}.

In Section \ref{s:Pos}, we have suggested another way to construct explicit m-dissipative boundary conditions for positive measures $\zeta \in H^{-1/2} (\paD)$ using Friedrichs-type extensions. This method can be extended to distributions $\zeta$ with certain sectoriality properties and to Krein-type extensions of \cite{EK22}. 

For $\paD$ of Lipschitz regularity,  one more (rougher) way to obtain  explicit conditions of m-dissipativity is considered briefly in the next subsection.

\subsection{$L^q$-impedance coefficients and their Friedrichs-type extensions}
\label{s:Lq}

Consider the general case where $d \ge 2$ and $\D \subset \RR^d$ is a Lipschitz domain. The following sufficient conditions for the verification of $f \in \MM (H^{s_1} , H^{-s_2} ) $ can be obtained from 
the three-function Hölder inequality and the Sobolev-type embeddings, see the proof in Appendix \ref{s:FSp}.

\begin{lem} \label{l:LqMs}
	Let $s_j \in [0,1]$ and $\vka_j = \frac{2s_j}{d-1}$ for $j=1,2$. Then
	the  embedding
$	\quad	
L^q (\paD) \imb \MM (H^{s_1} (\paD), H^{-s_2} (\paD) ) 
\quad $
	takes place in each of the following cases: 
	\begin{itemize}
		\item[(i)] $\vka_1 <1$, $\vka_2<1$, and $q \ge \frac{2}{\vka_1+\vka_2} = \frac{d-1}{s_1+s_2}$ (where $\frac{d-1}{0} = +\infty$);
		\item[(ii)] $\vka_1 \le 1$, $\vka_2 \le 1$, $(\vka_1 -1) (\vka_2 -1) = 0$, and 
		$q > \frac{d-1}{s_1+s_2}$;
		\item[(iii)] $\min\{\vka_1,\vka_2\} < 1$, $\max \{\vka_1, \vka_2\} > 1$,  and 
		$
		q =  \frac{2}{1+ \min\{\vka_1,\vka_2\}}$;
		\item[(iv)] $\vka_1 +\vka_2 >2$, $(\vka_1 -1) (\vka_2 -1) = 0$, and 
		$
		q > 1 ;
		$
		\item[(v)] $\vka_1 >1 $, $\vka_2>1$,  and 
		$
		q = 1 
		$ (since, by definition, $\MM (H^{s_1} , H^{-s_2} ) \subseteq L^1 (\paD)$, we have in this case  $ L^1 (\paD) = \MM (H^{s_1} , H^{-s_2}  ) $).
	\end{itemize}
\end{lem}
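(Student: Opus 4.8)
The plan is to verify directly the defining estimate \eqref{e:intADe} of Definition \ref{d:MMs1s2De} with an implied constant proportional to $\| f \|_{L^q (\paD)}$, which gives both $f \in \MM (H^{s_1} (\paD), H^{-s_2} (\paD))$ and the continuity of the embedding. Recall that $H^{s_j} (\paD) = H^{s_j}_\DepaD$ for $s_j \in [0,1]$, and that it suffices to treat $g_1, g_2 \in H^\infty_\DepaD$. The starting point is the three-function Hölder inequality
\[
\left| \int_\paD f g_1 \ove{g_2} \ \dd \Si \right| \le \| f \|_{L^q (\paD)} \, \| g_1 \|_{L^{p_1} (\paD)} \, \| g_2 \|_{L^{p_2} (\paD)} , \qquad \tfrac1q + \tfrac{1}{p_1} + \tfrac{1}{p_2} = 1 ,
\]
combined with the Sobolev embeddings $H^{s_j} (\paD) \imb L^{p_j} (\paD)$ on the compact $(d-1)$-dimensional boundary $\paD$. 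Once an admissible pair $(p_1,p_2)$ is chosen in each item, the right-hand side is $\lesssim \| f \|_{L^q} \| g_1 \|_{H^{s_1}} \| g_2 \|_{H^{s_2}}$, which is precisely \eqref{e:intADe}.

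First I would record the admissible ranges of the exponents $1/p_j$ furnished by the Sobolev embedding on $\paD$, obtained by localization and pullback to bounded domains in $\RR^{d-1}$ through the Lipschitz charts (which preserve $H^{s_j}$ for $s_j \in [0,1]$). With $\vka_j = \tfrac{2 s_j}{d-1}$ one has three regimes: if $\vka_j < 1$, then $H^{s_j} \imb L^{p_j}$ exactly for $\tfrac{1}{p_j} \ge \tfrac{1-\vka_j}{2}$, the critical value being \emph{attained}; if $\vka_j = 1$, then $H^{s_j} \imb L^{p_j}$ for every finite $p_j$, i.e.\ for $\tfrac1{p_j} > 0$, with the endpoint $0$ \emph{not} attained; and if $\vka_j > 1$, then $H^{s_j} \imb C (\paD) \imb L^\infty$, so $\tfrac1{p_j} \ge 0$ with $0$ attained (this supercritical regime occurs only for $d = 2$, since $s_j \le 1$).

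The proof then reduces to the elementary bookkeeping of choosing $p_1, p_2$ so that $\tfrac1q = 1 - \tfrac1{p_1} - \tfrac1{p_2}$ matches the target $q$ of each item, equivalently to minimizing $\tfrac1{p_1}+\tfrac1{p_2}$ over the admissible ranges. In case (i) this minimum equals $1 - \tfrac{\vka_1+\vka_2}{2}$ and is attained, giving $\tfrac1q \le \tfrac{\vka_1+\vka_2}{2}$, i.e.\ $q \ge \tfrac{2}{\vka_1+\vka_2} = \tfrac{d-1}{s_1+s_2}$. In case (iii) the minimum $\tfrac{1 - \min\{\vka_1,\vka_2\}}{2}$ is attained (take $p_j = \infty$ for the supercritical index and the critical $L^{p_j}$ for the other), yielding the sharp value $q = \tfrac{2}{1+\min\{\vka_1,\vka_2\}}$. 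In case (v) both indices admit $p_j = \infty$, forcing $q = 1$, while the reverse inclusion $\MM (H^{s_1}, H^{-s_2}) \subseteq L^1 (\paD)$ from \eqref{e:MimbL1} upgrades this to the equality $L^1 (\paD) = \MM (H^{s_1}, H^{-s_2})$. In cases (ii) and (iv) at least one critical exponent is not attained, so the infimum of $\tfrac1{p_1}+\tfrac1{p_2}$ is approached but never reached; this is exactly what forces the strict inequalities $q > \tfrac{d-1}{s_1+s_2}$ and $q > 1$, respectively, after which one picks $p_j$ slightly off the critical value. For any $q$ above the stated threshold a valid split persists, or one may simply invoke $L^q (\paD) \imb L^{q'} (\paD)$ on the finite-measure space $\paD$.

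The only genuinely technical point, and the main obstacle, is to justify these critical and supercritical Sobolev embeddings $H^{s_j} (\paD) \imb L^{p_j} (\paD)$ on a boundary of mere $C^{0,1}$-regularity, including the precise endpoint behaviour and the uniformity of the constants under the localization; once these embeddings are secured, the exponent combinatorics above is routine.
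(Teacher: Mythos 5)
Your proposal is correct and follows essentially the same route as the paper's proof in Appendix \ref{s:LqA}: the three-function H\"older inequality combined with the Sobolev embeddings $H^{s_j}(\paD)\imb L^{p_j}(\paD)$, with the case split governed by whether the exponents $\tfrac{1-\vka_j}{2}$ are positive, zero, or negative, and the strict inequalities in (ii) and (iv) arising exactly from the non-attained endpoint at $\vka_j=1$. The technical input you flag (the critical and supercritical embeddings on a $C^{0,1}$-boundary) is the same input the paper takes from the cited references, so there is no gap.
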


This lemma and Remark \ref{r:Mp12-12} on the case $s_1=s_2 =1/2$ immediately imply the following sufficient condition in therms of $L^q$-spaces.

\begin{cor} \label{c:M-disM12}
Let $d \ge 3$ and $\imp \in L^{d-1} (\pa \D, \overline{\CC}_\rr)$. Let $\Zc = \mulp_\imp^{H^{1/2} \sto H^{-1/2}}$. Then the acoustic operator $\A_\Zc$   associated with $\Zc  \ga_0 (p) =  \gan (\vbf)$ is m-dissipative.
\end{cor}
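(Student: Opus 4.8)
The plan is to reduce the statement to the already-established special case $s_1 = s_2 = 1/2$ recorded in Remark \ref{r:Mp12-12}, using Lemma \ref{l:LqMs} to certify the pointwise-multiplier membership directly from the $L^{d-1}$-hypothesis. The whole argument is a composition of these two results, so it is genuinely a corollary; the content lies entirely in checking that the critical exponent $d-1$ lands in the right case of the lemma.

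First I would apply Lemma \ref{l:LqMs} with $s_1 = s_2 = 1/2$. Then $\vka_j = \frac{2 s_j}{d-1} = \frac{1}{d-1}$ for $j=1,2$, and since $d \ge 3$ we have $\frac{1}{d-1} \le \frac12 < 1$, so both $\vka_j < 1$ and case (i) of the lemma is the relevant one. Its threshold exponent is $\frac{2}{\vka_1 + \vka_2} = \frac{d-1}{s_1 + s_2} = d-1$, and the endpoint value $q = d-1$ is admissible precisely because case (i) permits the non-strict inequality $q \ge \frac{d-1}{s_1+s_2}$. Hence the embedding $L^{d-1}(\paD) \imb \MM (H^{1/2}(\paD), H^{-1/2}(\paD))$ holds, and in particular $\imp \in \MM (H^{1/2}(\paD), H^{-1/2}(\paD)) = \MM (H^{1/2}_\DepaD, H^{-1/2}_\DepaD)$ (using $H^s_\DepaD = H^s(\paD)$ for $s \in [-1,1]$).

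Next I would observe that the hypothesis $\imp \in L^{d-1}(\paD, \overline{\CC}_\rr)$ encodes the sign condition $\re \imp \ge 0$ a.e.\ on $\paD$. Combining the multiplier membership just obtained with this sign condition, Remark \ref{r:Mp12-12} applies verbatim: the operator $\Zc = \mulp_\imp^{H^{1/2} \sto H^{-1/2}}$ is a bounded accretive operator, and the associated acoustic operator $\A_\Zc$ is m-dissipative. This is exactly the assertion of the corollary.

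The only point requiring genuine care is the endpoint exponent: one must confirm that $q = d-1$ is governed by case (i), with its non-strict threshold, and not by a neighbouring case carrying a strict inequality. This is the reason the hypothesis is stated with the sharp exponent $d-1$ rather than $d-1+\ep$, and it is where the work of Lemma \ref{l:LqMs} (hence of the three-function Hölder inequality and the Sobolev-type embeddings behind it) is being used. Everything else is a direct citation of Lemma \ref{l:LqMs} and Remark \ref{r:Mp12-12}, so I anticipate no substantive obstacle beyond this bookkeeping.
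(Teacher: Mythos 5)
Your proposal is correct and follows exactly the paper's route: the paper derives this corollary as an immediate consequence of Lemma \ref{l:LqMs} (case (i) with $s_1=s_2=1/2$, giving the non-strict threshold $q \ge d-1$) combined with Remark \ref{r:Mp12-12}. Your verification that $\vka_1=\vka_2=\tfrac{1}{d-1}<1$ for $d\ge 3$, so that the endpoint $q=d-1$ falls under case (i) rather than the critical case (ii), is precisely the point the paper relies on.
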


In the case $d=2$, the parameters $s_1=s_2 =1/2$ are in the critical case (ii) of Lemma \ref{l:LqMs}. Taking $ s_1 =  s_2 = 1/2 - \ep$ with arbitrarily small $\ep>0$, one gets that for  $\imp \in L^{q} (\pa \D, \overline{\CC}_\rr)$ with $q>d-1$, the operator $\A_\Zc$ is m-dissipative.
Remark \ref{r:LipDisc} yields the following more detailed result (which is essentially obtained in \cite{K25} with somewhat different, but equivalent,  formulation): for any $d \ge 2$ and $\imp \in L^q (\pa \D, \overline{\CC}_\rr)$ with $q > d-1$, the acoustic operator  $\A_\Zc$ with $\Zc = \mulp_\imp^{H^{1/2} \sto H^{-1/2}}$ is m-dissipative and $\A_\Zc |_{\GG_{\ab,\beta}}$ has  compact resolvent and purely discrete spectrum. 

For nonnegative impedance coefficients $\imp$, the $L^{d-1} $-condition can be substantially relaxed by means of F-extensions of Section \ref{s:Pos}.

\begin{cor} \label{c:LipPos}
Let $\imp \in L^{q} (\paD, \overline{\RR}_+)$ with $q>4/3$ in the case   $d=3$,  or $q\ge 2(d-1)/3$ in the case 
$d \ge 4$. Let $\Zc = \mulp_\imp^{1,-1/2}$. 
Then the operator  $\Zc$ admits 
 a Friedrichs-type  extension $[\Zc]_\Fr$, and the acoustic operator $\A_{[\Zc]_\Fr }$ is m-dissipative. 
\end{cor}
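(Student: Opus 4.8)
The plan is to reduce everything to Remark~\ref{r:LipPos}, which already packages the Friedrichs-type extension construction and the resulting m-dissipativity. That remark, applied with $s=1$, asserts precisely that if $\imp \ge 0$ a.e. on $\paD$ and $\imp \in \MM (H^{1/2}_\DepaD, H^{-1}_\DepaD)$, then $\Zc = \mulp_\imp^{1,-1/2}$ is a nonnegative densely defined impedance operator admitting a nonnegative Friedrichs-type extension $[\Zc]_\Fr$, and the associated acoustic operator $\A_{[\Zc]_\Fr}$ is m-dissipative. Since $\overline{\RR}_+ = [0,+\infty)$, the hypothesis $\imp \in L^{q} (\paD, \overline{\RR}_+)$ gives $\imp \ge 0$ a.e. for free. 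Thus the entire content of the corollary collapses to the single membership
\[
\imp \in \MM (H^{1/2}_\DepaD, H^{-1}_\DepaD) = \MM (H^{1/2} (\paD), H^{-1} (\paD)),
\]
where the identification uses $H^s_\DepaD = H^s (\paD)$ for $s \in [-1,1]$ (formula \eqref{e:HsDe=Hs}).

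To establish this membership I would invoke Lemma~\ref{l:LqMs} with $s_1 = 1/2$ and $s_2 = 1$, for which the exponents read $\vka_1 = \frac{2 s_1}{d-1} = \frac{1}{d-1}$ and $\vka_2 = \frac{2 s_2}{d-1} = \frac{2}{d-1}$. The proof then splits along exactly the two regimes appearing in the statement. For $d \ge 4$ one has $\vka_1 \le \frac{1}{3} <1$ and $\vka_2 \le \frac{2}{3} <1$, so case~(i) of Lemma~\ref{l:LqMs} applies and yields $L^{q} (\paD) \imb \MM (H^{1/2} (\paD), H^{-1} (\paD))$ under the threshold $q \ge \frac{2}{\vka_1+\vka_2} = \frac{2(d-1)}{3}$, which is the condition $q \ge 2(d-1)/3$ assumed for $d \ge 4$. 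For $d = 3$ one has $\vka_2 = 1$ exactly, so $(\vka_1-1)(\vka_2-1)=0$ and we land in the critical case~(ii) of Lemma~\ref{l:LqMs}; the corresponding embedding holds under the \emph{strict} threshold $q > \frac{d-1}{s_1+s_2} = \frac{4}{3}$, matching the assumption $q>4/3$.

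In either case the embedding supplies $\imp \in \MM (H^{1/2} (\paD), H^{-1} (\paD))$, and, combined with the nonnegativity $\imp \ge 0$ a.e., the hypotheses of Remark~\ref{r:LipPos} are met with $s=1$, which then delivers both conclusions of the corollary. The only genuinely delicate point is the bookkeeping at the endpoint $d=3$: there $\vka_2=1$ forces the borderline case~(ii) rather than case~(i), and consequently the inequality on $q$ must be strict ($q>4/3$) rather than allowing equality, which is exactly why the statement separates $d=3$ from $d\ge 4$. Everything else — the reality and sign of $\imp$, the identification $H^s_\DepaD=H^s (\paD)$, and the extension-theoretic content — is already furnished by the quoted results, so no further computation is required.
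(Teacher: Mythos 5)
Your proof is correct and follows essentially the same route as the paper: the paper likewise deduces $L^q(\paD)\imb \MM(H^{1}(\paD),H^{-1/2}(\paD))$ from Lemma~\ref{l:LqMs} and then concludes via Remark~\ref{r:LipPos}. The only cosmetic difference is your choice $(s_1,s_2)=(1/2,1)$ versus the paper's $(1,1/2)$, which yield the same multiplier space by the symmetry $\MM(H^{s_1},H^{-s_2})=\MM(H^{s_2},H^{-s_1})$, and your case analysis (critical case~(ii) at $d=3$, case~(i) for $d\ge 4$) matches the thresholds exactly.
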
  

\begin{proof}
By Lemma \ref{l:LqMs}, the assumptions of the corollary imply that $L^q (\paD) \imb \MM (H^{1} , H^{-1/2}  )$.  Since  
$\imp \in L^{q} (\paD)$ and $\imp \ge 0$ a.e.,  Remark \ref{r:LipPos} completes the proof.
\end{proof}

\begin{appendices}

\section{Sobolev spaces and Laplacians  on non-smooth surfaces}
\label{a:Analysis}

\subsection{Proof of Theorem \ref{t:W} on  rough Weyl's asymptotics}
\label{a:W}

Let $\D \subset \RR^d$ be a Lipschitz  domain, i.e.,  a bounded connected open set $\D \neq \varnothing$ with a $C^{0,1}$-boundary  $\Ga=\paD$.
The $C^{0,1}$-regularity of $\Ga$  means \cite{G11,GMMM11} that, for every $x \in \Ga$, there exists an open neighborhood $\Oc$ of $x $ and new coordinates $y= (y_1, \dots,y_d)$ obtained from the original
one via a rigid motion such that:
\begin{itemize}
	\item[(A)]  in the new coordinates,  
	$\Oc= \{y : -\ell_j < y_j < \ell_j, 1\le j \le d\}$ is a nonempty hyperrectangle;
	\item[(B)] there exists a $C^{0,1}$-function $\psi:\Oc' \to [-\ell_d/2,\ell_d/2] $ that is defined on $\Oc' = \{y' \in \RR^{d-1} : -\ell_j < y_j < \ell_j, 1\le j \le d-1\}$ and has  the properties 
	\[ \D \cap \Oc = \{(y', y_d) \in \Oc: y_d < \psi (y') \}
	\text{ and } \Ga  \cap \Oc = \{(y', y_d) \in \Oc: y_d = \psi (y') \}.
	\]
\end{itemize}
In these settings, $\Psi (y') = (y',\psi (y'))$, $y \in \Oc$, defines a bi-Lipschitz mapping $\Psi $ from $\Oc'$ onto $\Psi  \Oc' = \Ga  \cap \Oc $.

Since $\Ga$ is compact, one can find a finite open cover  $  \{\Oc_m\}_{m=1}^M$ of $\Ga$
by hyperrectangles $\Oc_m =\{y^m : -\ell_j^m < y^m_j < \ell_j^m, 1\le j \le d\}$ satisfying conditions (A) and (B).
Here  $y^m= (y^m_1, \dots,y^m_d)$ denotes the system of coordinates for $\Oc^m$ as in (A)-(B). We use the similar indexing for 
$\psi_m$, $\Oc'_m$, $\Psi_m$, etc., and put $\Ga_m = \Ga \cap \Oc_m$. Then 
$\wh \Ga = \{\Ga_m\}_{m=1}^M$ is a cover of $\Ga$ by relatively open subsets.

For $S \subset \RR^n$ and $1/2<\al \le 1$, we put $\al S:= \{ \al x : x \in S\}$ 
and consider the subsets $\al \Oc'_m $ of $\Oc'_m$, which are bijectively mapped by $\Psi_m $ onto $\Ga^\al_m := (\al \Oc_m) \cap \Ga$.  

Let   $\wt \Ga$ be a (relatively) open subset of  $\Ga$ such that its boundary $\pa \wt \Ga$ (w.r.t. $\Ga$) is a $C^{0,1}$-submanifold. In what follows $\wt \Ga$ is typically one of the sets $\Ga_m^\al$, but the case $\wt \Ga = \Ga$ is not excluded.
The space $H^1 (\wt \Ga)$ can be defined in the standard way by the localization.
We denote by $ \LL^2_\tr (\wt \Ga)$ the Hilbert space of tangential vector-fields on $\wt \Ga$ that are $L^2$-summable (w.r.t. the surface measure $\dd \Si$ of $\Ga$). The tangential gradient operator $\Na_{\wt \Ga}$ on $\wt \Ga$ can be defined such that 
$\Na_{\wt \Ga} \in \Lc (H^1 (\wt \Ga),\LL^2_\tr (\wt \Ga))$. 
In the sequel, it is convenient for us to fix $\|u\|_{H^1 (\wt \Ga)} = (\|u \|_{\LL^2 (\wt \Ga) }^2 + \|\Na_{\wt \Ga} u \|_{\LL^2 (\wt \Ga) }^2)^{1/2} $ as the norm in $H^1 (\wt \Ga)$.
Note that $H^1 (\wt \Ga)$ is a Hilbert space with this norm, and that for $\wt \Ga = \Ga$ this norm is equivalent to the norms of Section \ref{s:Ac1}.

The Hilbert space 
$
H^1_0 (\wt \Ga) 
$
is the closure in $H^1 (\wt \Ga)$ of the set of all $f \in H^1 (\wt \Ga)$ supported on a compact subsets of $\wt \Ga$.  A function $ f \in H^1_0 (\wt \Ga)$ extended to the whole $\Ga$ by  $0$ belongs to $H^1 (\Ga)$ and has the same norm. 

The quadratic form 
$
Q^{\wt \Ga} [u] = \|\Na_{\wt \Ga} u\|^2_{\LL^2_\tr (\wt \Ga)} 
$
defined on $\dom Q^{\wt \Ga} = H^1 (\wt \Ga)$ is nonnegative and closed in $L^2 (\wt \Ga)$.
The  quadratic form $Q_0^{\wt \Ga}$ defined as the restriction of $Q^{\wt \Ga}$ to 
$\dom Q_0^{\wt \Ga} = H^1_0 (\wt \Ga)$ is also nonnegative and closed in $L^2 (\wt \Ga)$.

The role of the forms $Q^{\wt \Ga}$ and $Q^{\wt \Ga}_0$ is that they define in $L^2 (\wt \Ga)$ two
nonnegative selfadjoint  Laplace-Beltrami operators, $\De_{\wt \Ga}^\Nr$ and $\De_{\wt \Ga}^\Dr$, that correspond to Neumann and Dirichlet boundary conditions, resp.,
(for the basics concerning closed quadratic forms, see, e.g., \cite{Kato,GMMM11}). 
In particular, 
the  Laplace-Beltrami operator $\De_\Ga  : \dom \De_\Ga \subset L^2 (\Ga) \to L^2 (\Ga)$ on the whole  $\Ga$
is the nonnegative selfadjoint  operator in $L^2 (\Ga) $ 
associated with the form $ Q^\Ga$ \cite{GMMM11}.

Let a selfadjoint operator $T$ be nonnegative and such that $\si (T) = \si_\disc (T)$. Under the ordered multiset  of eigenvalues of $T$, we understand a non-decreasing sequence $\{\la_n\}_{n \in \NN}$ consisting of all eigenvalues of $T$, where each eigenvalue is repeated according to its multiplicity. 
By  $\Nc (\la;T) := \sum_{\la_n \le \la } 1$ we denote the standard eigenvalue counting function for $T$.
Let $\{\mu_n\}_{n\in \NN}$  be the (ordered) multiset of eigenvalues of  $\De_\Ga$.
By the min-max principle, 
\[
\mu_n = \min_{\substack{E \subset H^1 (\Ga)\\\dim E = n}} \max_{\substack{u \in E\\ u \ne 0}}  \frac{Q^\Ga [u] }{\|u\|^2_{L^2 (\Ga)}} ,
\qquad n \in \NN.
\]
Here $\dim E$ is the dimensionality of $E$, and $E$ runs through all linear subspaces of the prescribed dimensionality.
Let us prove the asymptotics \eqref{e:asymp}.

\emph{Step 1. The estimate from above.} Let us put $\wt \Ga = \Ga_m^{\al}$ for certain fixed parameters $\al \in (1/2,1)$ and $m\in \{1,\dots,M\}$.
Then the numbers 
\[
\la_n^{\Dr} = \la_n^{\Dr,m,\al} = \min_{\substack{E \subset H^1_0 (\wt \Ga) \\ \dim E = n}} \max_{\substack{u \in E\\ u \ne 0}}  \frac{Q^{\wt \Ga} [u] }{\|u\|^2_{L^2 (\wt \Ga)}} 
\]
have the property $\mu_n \le \la_n^\Dr$ for all $n \in \NN$. This follows from the isometric embedding 
$H^1_0 (\wt \Ga) \imb H^1 (\Ga)$.
Writing the  form $Q^{\wt \Ga}$  in local coordinates over $\al \Oc'_m$ (see \cite{GMMM11}) one checks that the coefficient emerging from the metric tensor-field  is  uniformly positive and essentially bounded.
Hence,  for a certain constant  $C_+>0$,  one has the inequalities 
$\la_n^\Dr \le C^1_+ \vka_n^{\Dr} , \quad n \in \NN, 
$  
where $\{\vka_n^{\Dr}\}_{n \in \NN}$ is the multiset of eigenvalues of  the (nonnegative) Dirichlet Laplacian  operator in the flat hyperrectangle $\al \Oc'_m$.
Using the classical Weyl asymptotics (or by direct calculation of the Laplacian's eigenfunctions in the hyperrectangle),
we obtain $\vka_n^{\Dr} \lesssim n^{2/(d-1)}$ as $n\to+\infty$, and so
\[
\mu_n \le \la_n^\Dr \le C^1_+ \vka_n^{\Dr} \lesssim n^{2/(d-1)} \qquad \text{for sufficiently large $n$}. 
\]
In other words, a single restricted set $\Ga_m^{\al}$ is enough in order to obtain the rough Weyl-type  estimate from above.

\emph{Step 2. For the lower bound,  we estimate 
on  the whole cover $\wh \Ga =\{\Ga_m\}_{m=1}^M$ of $\Ga$ and use all the quadratic forms $Q^{\Ga_m}$ that correspond to Neumann Laplacians.} Since the sets $\Ga_m$ overlap,
this procedure requires additional preparations.

Let us define the Hilbert space $L^2_{\wh \Ga} = \bigoplus_{m=1}^M L^2 (\Ga_m) $ of m-tuples $\wh u = (u_1, \dots, u_M)$ consisting of functions $u_m \in L^2 (\Ga_j)$, $m=1$, \dots, $M$, chosen independently of each other. Similarly, we put $ H^1_{\wh \Ga} = \bigoplus_{m=1}^M H^1 (\Ga_m) $.
In $L^2_{\wh \Ga}$, the closed form 
\[
\wh u = (u_1, \dots, u_M) \mapsto \sum_{m=1}^M Q^{\Ga_m} [u_m], 
\qquad \wh u \in H^1_{\wh \Ga} ,
\] 
 defines the nonnegative selfadjoint operator 
$
\De^\Nr_{\wh \Ga} = \De_{\Ga_1}^\Nr \oplus \dots \oplus  \De_{\Ga_M}^\Nr $.

Let $\{\la_n^\Nr\}_{n \in \NN}$ be the multi-set of eigenvalues of $\De_{\wh \Ga}^\Nr$. It is clear that $\{\la_n^\Nr\}_{n \in \NN}$ is a (non-decreasingly reordered) union of the multisets of eigenvalues for the operators  $\De_{\Ga_m}^\Nr$.
For arbitrary $u \in H^1 (\Ga)$, the restriction $u\uph_{\Ga_m}$ of $u$ to $\Ga_m$ belongs to $H^1 (\Ga_m)$ for every $m$. 
This allows us to combine the inequality 
\[
\frac{\sum_{m=1}^M Q^{\Ga_m} (u) }{\sum_{m=1}^M \|u\|_{L^2 (\Ga_m)}^2} \le M \frac{ Q^\Ga (u)}{\|u\|^2_{L^2 (\Ga)}} , \qquad u \in H^1 (\Ga) \setminus \{0\},
\]
with the min-max principle in order to obtain 
$
\la_n^\Nr \ \le \ M \ \mu_n , \qquad n \in \NN.
$

Let us estimate the asymptotics  of eigenvalues $\la_n^\Nr$ of $\De_{\wh \Ga}^\Nr$. 
Passing as before to the local coordinates for each of quadratic forms $Q^{\Ga_m}$ and estimating their uniformly positive coefficients from below by positive constants, one obtains from the Weyl law for the Neumann Laplacian $\De_{\Oc'_m}^\Nr$ on  a hyperrectangle $\Oc'_m$ that 
\[
\Nc (\la;\De_{\Ga_m}^\Nr) \lesssim \Nc (\la; \De_{\Oc'}^\Nr) \lesssim \la^{(d-1)/2} \quad \text{ for sufficiently large } \la >0 .
\]
Since $\Nc (\la; \De_{\wh \Ga}^\Nr) = \sum_{m=1}^M \Nc (\la;\De_{\Ga_m}^\Nr)$,
we get the similar bound $\Nc (\la;\De_{\wh \Ga}^\Nr) \lesssim \la^{(d-1)/2} $ for $\De_{\wh \Ga}^\Nr$.
This implies $n^{2/(d-1)} \lesssim \la_n^\Nr  $ for sufficiently large $n$ and, in turn, implies 
$n^{2/(d-1)} \lesssim \frac{1}{M}\la_n^\Nr  \le \mu_n $.
This completes the proof of Theorem \ref{t:W}.

\subsection{Functional spaces  on $\paD$ and the proof of Lemma \ref{l:LqMs}}
\label{s:FSp}
\label{s:LqA}

In this subsection, we skip $\paD$ in the notation $L^p (\paD)$ and $H^s (\paD)$ whenever this does not lead to ambiguities.
For an interpolation pair $\{\Yf_0,\Yf_1\}$ of Banach spaces, we denote by $[\Yf_0,\Yf_1]_\vth$ and 
$(\Yf_0,\Yf_1)_{\vth,q}$ the interpolation spaces produced by 
the complex interpolation and by 
the K-method of Peetre, resp.  (see, e.g., \cite{T78}).

In Section \ref{s:MultGen}, we used  under  assumption \eqref{a:Ck-11} the  formula
\begin{gather} \label{e:IntHs-sA}
	 H^{s(1-2\vth)}  = [H^s  ,H^{-s} ]_\vth = (H^s  ,H^{-s}  )_{\vth,2}, \quad  \vth \in [0,1], \quad  s \in [-k,k],
\end{gather}
which is known for compact  Riemann $C^{\infty}$-manifolds \cite{T92} 
and, for  $s \in [0,1]$, in the case of compact  Lipschitz boundaries \cite{GMMM11}.
In our case of compact $C^{k-1,1}$-boundary, the proof of \eqref{e:IntHs-sA} in \cite{T92} requires some changes.
Namely, the well-known  interpolation formula for a rigged Hilbert space $[H^s, H^{-s}]_{1/2} = H^0 = L^2 $
 has to be used in order to extend the interpolation to the spaces of generalized functions $H^t$ with $t<0$.
  Using this formula and the reiteration theorem for the complex interpolation, it is possible to obtain \eqref{e:IntHs-sA} from 
the corresponding results on the Triebel-Lizorkin spaces $F^t_{p,q} (\paD)$
and the equality $H^s = F^s_{2,2} (\paD)$  along the lines indicated in \cite{T92,T02}.

Consider now the general case where $\D \subset \RR^d$ is a Lipschitz domain.
The following continuous embeddings of the fractional Sobolev spaces on $\paD$ take place :
\begin{gather*} 
\textstyle	H^{s} (\pa \D) \imb L^p (\pa \D)  \ \text{ if $ 1 \le  p<\infty$ and $ 0<s \le 1$ satisfy  
		$\frac{1}{2} - \frac{s}{d-1} \le \frac{1}{p}$};\\
\text{$H^{s} (\pa \D) \imb C (\pa \D) $ \qquad  if \quad $d=2$ and $1/2 <s \le 1$ (see \cite{T92,T02,G11}).}
\end{gather*}
Here and below, $C(\paD)$ is the space of continuous functions.

\begin{proof}[Proof of Lemma \ref{l:LqMs}]
In the case $s_1=s_2=0$, Lemma \ref{l:LqMs} is obvious. Assume that $s_1+s_2>0$ and $s_1,s_2 \in [0,1]$.
Let $\pi_j :=\frac{1}{2} - \frac{s_j}{d-1} = (1-\vka_j)/2$ for $j=1,2$. We put $\pi_0 =1 - \pi_1 - \pi_2 = \frac{s_1+s_2}{d-1}$ and note that 
$\pi_0>0$.

\emph{Step 1.} Assume that $\pi_j>0$ for $j=1,2$. Let $p_j = 1/\pi_j$ for $j=0,1,2$.
Then $p_1, p_2 \in (2,+\infty)$ and $p_0 \in (1,+\infty)$. The Sobolev-type  embedding gives
$H^{s_j}  \imb L^{p_j} $ for $j=1,2$. 
Using the Hölder inequality (for three functions), we get 
\begin{align*}
\int_{\pa \D} |f g_1 g_2|  
\le  \| f \|_{L^{p_0} } \| g_1 \|_{L^{p_1} } \| g_2 \|_{L^{p_2} } 
\lesssim  \|f\|_{L^{p_0}} \| g_1 \|_{H^{s_1} } \| g_2 \|_{H^{s_2} }
\end{align*}
for every $f \in L^{p_0} $,
$g_1 \in H^{s_1}$, and $g_2 \in H^{s_2}$, i.e.,
$
L^{p_0}  = L^{\frac{d-1}{s_1+s_2}}  \imb \MM (H^{s_1} , H^{-s_2}  ) $.

\emph{Step 2.} Assume that $\pi_j \ge 0$ for $j=1,2$, and $\pi_1 \pi_2 =0$. We take small enough $
\ep >0$ such that $r_j = 1/(\pi_j+\ep) \in (1,+\infty)$ for $j=1,2,$ and $1-1/r_1- 1/r_2>0$. Then 
$r_0 := (1-1/r_1- 1/r_2)^{-1}$ satisfies $1 <r_0 <+\infty$.
From the  embedding $H^{s_j}  \imb L^{r_j} $ for $j=1,2$, we get like in the Step 1 that 
$\int_{\pa \D} |f g_1 g_2| 
\lesssim  \|f\|_{L^{r_0}} \| g_1 \|_{H^{s_1} } \| g_2 \|_{H^{s_2} }$.
Letting $\vep \to +0$, one obtains  
$
L^q \imb \MM (H^{s_1} , H^{-s_2}  )$  for every $ q > \frac{d-1}{s_1+s_2}$.
Note that, in the case under consideration, $\frac{d-1}{s_1+s_2} \ge 1$.

\emph{Step 3. Consider the case $\pi_2<0$}.  
In this case, the embeddings $H^{s_2}  \imb C (\paD)\imb L^\infty $ hold true. Hence, 
\begin{align*} 
\int_{\pa \D} |f g_1 g_2|  
\lesssim \| g_2 \|_{L^\infty } \int_{\pa \D} |f g_1 |   \lesssim  \| g_2 \|_{H^{s_2} }\int_{\pa \D} |f g_1 | .
\end{align*}
whenever $f g_1\in L^1 (\paD)$.

If  $\pi_1 <0$ (this case is equivalent to $\vka_1 >1 $, $\vka_2>1$),
we get from $H^{s_1}  \imb L^\infty $ and \eqref{e:MimbL1} 
that $
L^1  \imb \MM (H^{s_1} (\pa \D), H^{-s_2} (\pa \D) )  \imb L^1$, i.e., 
$ \MM (H^{s_1} (\pa \D), H^{-s_2} (\pa \D) )  = L^1.$

Assume now that $\pi_1>0$.  Let $p_1 := 1/\pi_1$ and $p'_1 := (1 - \pi_1)^{-1}$. Since $0<\pi_1\le 1/2$, we see that 
$p'_1 = \frac{2(d-1)}{d-1+2s_1} \in (1,2) $. 
Then 
$H^{s_1} (\paD) \imb L^{p_1} (\paD)$ implies
\begin{align*} 
\int_{\pa \D} |f g_1 g_2|  
\lesssim  \|f\|_{L^{p'_1}} \|g_1 \|_{L^{p_1} }   \| g_2 \|_{H^{s_2} }  
\lesssim \|f\|_{L^{p'_1} } \|g_1 \|_{H^{s_1} }   \| g_2 \|_{H^{s_2} }   
\end{align*}
for all $f \in L^{p'_1} $,  $g_1 \in H^{s_1}$, and $g_2 \in H^{s_2}$.
Thus, 
$L^{p'_1}  = L^{\frac{2(d-1)}{d-1+2s_1}}   \imb \MM (H^{s_1} , H^{-s_2}  ) $.

In the case where $\pi_1=0$ and $\pi_2<0$, one can use the arguments similar to Step 2 in order to obtain 
$
 L^q   \imb \MM (H^{s_1} , H^{-s_2}  )$  for all $ q > \frac{2(d-1)}{d-1+2s_1} =1$.
This completes the proof of Lemma \ref{l:LqMs}.
\end{proof} 
	
\section{Operator theory approach to boundary conditions}
\label{a:AbsBC}

In this section we give the proofs of several operator theory results related to m-dissipative acoustic operators that were left without proofs in the preceding sections (this concerns Theorems \ref{t:DtN-GIBC} and Theorem \ref{t:RandM-dis}). 

\subsection{Accretivity and $\cross$-adjoints in generalized rigged Hilbert spaces}
\label{s:GenAcc}

Let $(\Hs_{-,+},\Hs,\Hs_{+,-})$ be the \emph{generalized rigged Hilbert space} in the sense of
 \cite[Vol. II, Appendix to IX.4, Ex.~3]{RS} (see also detailed descriptions in \cite{S21,EK22}).
 In particular, the Hilbert spaces $\Hs_{-,+}$ and $\Hs_{+,-}$ are dual to each other with respect to the sesquilinear $\Hs$-pairing $\<\cdot,\cdot\>_\Hs$
  that is obtained from the inner product $(\cdot|\cdot)_\Hs$ of the pivot Hilbert space $\Hs$ as an extension by continuity to $\Hs_{\mp,\pm} \times \Hs_{\pm,\mp}$. 
Note that a generalized rigged Hilbert space does not assume embeddings between spaces $\Hs_{-,+}$, $\Hs_{+,-}$, and $\Hs$
(an important example without any embeddings between $\Hs_{-,+}$, $\Hs_{+,-}$, and $\Hs$ appears naturally if one consider the trace spaces associated with Maxwell operators).

The notion of adjoint operator and the notions of accretive, dissipative, and selfadjoint operators have a straightforward extensions to the case an operator $T : \dom T \subseteq \Hs_{-,+} \to \Hs_{+,-}$ \cite{GMMM11,EK22}. For example, 
an operator $Z: \dom Z \subseteq \Hs_{-,+} \to \Hs_{+,-} $ is called \emph{accretive} if $\re \<Z y , y \>_{\Hs} \ge 0$ for all $y \in \dom Z$ \cite{EK22}.

Let $T:\dom T \subseteq \Hs_{-,+} \to \Hs_{+,-}$ be densely defined (in $\Hs_{-,+}$).
Then the operator $T^\cross:\dom T^\cross \subseteq \Hs_{-,+} \to \Hs_{+,-}$ is defined as the adjoint to $T$ 
w.r.t. the $\Hs$-pairing $\<\cdot,\cdot\>_\Hs$. In short, we  say that  $T^\cross$ is the \emph{$\cross$-adjoint operator to $T$}.
This definition  means that the graph $\Gr T^\cross = \{\{f,T^\cross f\} : f \in \dom T^\cross\}$ 
consist of all $\{g_{-,+},g_{+,-}\} \in \Hs_{-,+} \oplus \Hs_{+,-}$ such that $\<h_{+,-} | g _{-,+}\>_\Hs  =  \<h_{-,+} | g_{+,-}\>_\Hs$ for all  $\{ h_{-,+} , h_{+,-} \} \in \Gr T$.
Consequently, the operator $T^\cross$ is closed and $(T^\cross)^\cross = \ove T$.

The notion of  $\cross$-adjoint operator can be extended also to operators acting between spaces $\Hs$ and $\Hs_{\mp,\pm}$. For example, for an operator $V \in \Lc (\Hs, \Hs_{-,+})$, there exists a unique $\cross$-adjoint operator $V^\cross \in 
\Lc (\Hs_{+,-}, \Hs)$ such that 
$\< V f,g\>_\Hs = (  f | V^\cross g)_\Hs$   for all $ f \in \Hs$ and $g \in \Hs_{+,-}$.
For a linear homeomorphism  $V \in \Hom (\Hs,\Hs_{-,+})$, one has $V^\cross \in \Hom (\Hs_{+,-}, \Hs)$ and $(V^\cross)^{-1} = (V^{-1})^\cross$.

In what follows, we fix a certain pair of mutually $\cross$-adjoint linear homeomorphisms $V \in \Hom (\Hs, \Hs_{-,+})$ and $V^\cross \in \Hom (\Hs_{+,-}, \Hs)$.
Such a pair allows one to reduce statements for an operator $Z: \dom Z \subseteq \Hs_{-,+} \to \Hs_{+,-} $ to  known statements for the operator 
$\wt Z:=V^\cross Z V$ (which acts in $\Hs$ in the sense that $\wt Z : \dom \wt Z \subseteq \Hs \to \Hs$); e.g., 
\begin{gather} 
	\text{$Z$ is accretive or densely defined if and only if 
		$V^\cross Z V$ is so;} \label{e:V*ZV} \\
\text{if $Z$ is densely defined, then  $V^\cross Z^\cross V = (V^\cross Z V)^*$, see \cite{EK22}.}
\label{e:V*Z*V}
\end{gather}

Maximal accretive densely defined operators $Z: \dom Z \subseteq \Hs_{-,+} \to \Hs_{+,-} $ serve as analogues of m-accretive operators $\wt Z : \dom \wt Z \subseteq \Hs \to \Hs$.

\begin{thm}[cf. \cite{P59,EK22,K25}] \label{t:CrM-acrGen}
	For an operator $Z:\dom Z \subseteq \Hs_{-,+} \to \Hs_{+,-}$, the following statements are equivalent:
	\begin{itemize}
		\item[(i)] $Z$ is densely defined and maximal accretive.
		\item[(ii)] $Z$ is closed and maximal accretive.
		\item[(iii)] $Z$ is accretive and there exists a densely defined accretive operator 
		$Q : \dom Q \subseteq \Hs_{-,+} \to \Hs_{+,-}$ such that $Z = Q^\cross$.
		\item[(iv)] $V^\cross Z V$ is m-accretive.
	\end{itemize}
\end{thm}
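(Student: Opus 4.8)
The plan is to reduce every statement to a corresponding statement about the operator $\wt Z := V^\cross Z V$ acting inside the genuine Hilbert space $\Hs$, and then to invoke the classical Hilbert-space theory of m-accretive operators from Phillips \cite{P59}. The reduction rests on the two recorded facts \eqref{e:V*ZV} and \eqref{e:V*Z*V}: the first transfers density of the domain and accretivity between $Z$ and $\wt Z$, while the second identifies $\cross$-adjoints with ordinary Hilbert-space adjoints through $V^\cross Z^\cross V = (V^\cross Z V)^*$.

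First I would observe that the correspondence $Z \mapsto \wt Z = V^\cross Z V$ is a bijection between operators from $\Hs_{-,+}$ to $\Hs_{+,-}$ and operators in $\Hs$, with inverse $\wt Z \mapsto (V^\cross)^{-1} \wt Z V^{-1}$. Since $V$ and $V^\cross$ are linear homeomorphisms, one has $\dom \wt Z = V^{-1}\dom Z$, the graph of $\wt Z$ is closed precisely when the graph of $Z$ is, and graph inclusion is preserved in both directions, with proper inclusions corresponding to proper inclusions. Combined with \eqref{e:V*ZV}, this shows that $Z$ admits a proper accretive extension if and only if $\wt Z$ does; hence $Z$ is maximal accretive if and only if $\wt Z$ is maximal accretive.

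Next I would record the classical characterization for an operator $A : \dom A \subseteq \Hs \to \Hs$ in a Hilbert space, all of which is contained in or follows from \cite{P59} (see also \cite{Kato}): the conditions ``$A$ is densely defined and maximal accretive'', ``$A$ is closed and maximal accretive'', ``$A$ is accretive and $A = B^*$ for some densely defined accretive $B$'', and ``$A$ is m-accretive'' are mutually equivalent. Here one uses that a maximal accretive operator in a Hilbert space is automatically densely defined and closed (otherwise it would admit a proper accretive extension), that m-accretive operators are densely defined with m-accretive adjoint, and that accretive operators are closable, so that $B^*$ is densely defined.

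Finally I would translate each of (i)--(iv) through the reduction. Conditions (i) and (ii) for $Z$ become, via the previous two paragraphs, ``$\wt Z$ densely defined (resp.\ closed) and maximal accretive'', and by the classical theorem both are equivalent to ``$\wt Z$ is m-accretive'', that is, to (iv). For (iii), I would write $\wt Z = V^\cross Q^\cross V = (V^\cross Q V)^*$ using \eqref{e:V*Z*V}, so that (iii) for $Z$ is exactly ``$\wt Z$ is accretive and $\wt Z = \wt Q^*$ for a densely defined accretive $\wt Q = V^\cross Q V$'', which is again the classical criterion for $\wt Z$ to be m-accretive. The main obstacle is not any single hard step but the careful bookkeeping required to verify that maximal accretivity transfers under the reduction -- in particular that the homeomorphism-induced bijection on operators sends proper accretive extensions to proper accretive extensions -- together with checking that the a priori weaker hypotheses ``densely defined'' in (i) and ``closed'' in (ii) are each individually sufficient because maximality forces the other; the genuinely nontrivial analytic input, namely that maximal accretivity implies m-accretivity, is supplied by Phillips' theorem and only cited.
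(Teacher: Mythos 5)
Your proposal is correct and follows essentially the same route as the paper: reduce to the operator $V^\cross Z V$ in the pivot Hilbert space via \eqref{e:V*ZV}--\eqref{e:V*Z*V} and then invoke the classical Phillips--Kato equivalences for m-accretive operators. The paper leaves the transfer of maximal accretivity under the homeomorphism-induced bijection implicit, whereas you spell it out; this is a presentational difference only, not a different argument.
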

\begin{proof}
In the case where $(\Hs_{\mp,\pm},\| \cdot \|_{\Hs_{\mp,\pm}})= (\Hs, \|\cdot\|_\Hs)$ and $I = V^\cross = V$,	this theorem follows from a combination of results of \cite{P59,Kato} on m-accretive operators (compare \cite[Section V.3.10]{Kato} with 
the following statements in \cite{P59}:  
Theorems 1.1.1 and 1.1.3, Lemma 1.1.1, the remarks before Lemma 1.1.1, and the corollary after Theorem 1.1.2).
The general case $Z:\dom Z \subseteq \Hs_{-,+} \to \Hs_{+,-}$ with an arbitrary  generalized rigged Hilbert space can be reduced to the case of 
the trivial duality using \eqref{e:V*ZV}-\eqref{e:V*Z*V}.
\end{proof}

\subsection{Boundary tuples and abstract boundary condition}
\label{s:BT}

Let  $A: \dom A \subseteq \Xf \to \Xf$ be a closed symmetric densely defined  operator  in a Hilbert space $\Xf$.
The following abstract integration by parts helps to define abstract  boundary conditions associated with restrictions of $A^*$.

\begin{defn}[\cite{EK22}] \label{d:MBT}
	Assume that $(\Hs_{-,+},\Hs,\Hs_{+,-})$ is a generalized rigged Hilbert space. 
	Then $(\Hs_{-,+},\Hs, \Hs_{+,-}, \Ga_0,\Ga_1)$ is called an \emph{m-boundary tuple}  for $\Ao^*$
	if  the map 
	$\Ga : f \mapsto \{ \Ga_0 f , \Ga_1 f \} $ is a surjective linear operator from $\dom \Ao^*$ onto $\Hs_{-,+} \oplus \Hs_{+,-}$ and 
	\begin{equation*} 
		\text{$(\Ao^*f |g)_{\Xf} - (f |\Ao^*g)_{\Xf} =  \< \Ga_1 f , \Ga_0 g \>_\Hs  - \<\Ga_0 f , \Ga_1 g \>_\Hs $ for all $f,g \in \dom \Ao^*$.}
	\end{equation*}
\end{defn}


In the case of the trivial duality $(\Hs, \| \cdot\|)=(\Hs_{\mp,\pm}, \|\cdot\|_{\Hs_{\mp,\pm}})$, an m-boundary tuple becomes effectively a boundary triple $(\Hs,\Ga_0,\Ga_1)$ in the sense of Kochubei, see \cite{K75} and also see  \cite{P12,BHdS20,DHM22,EK22,DM26} for modern expositions of the theory of boundary tuples.

\begin{ex} \label{ex:acute}
	In the settings of Section \ref{s:Ac1}, let us  put $\wh \ga_0  (\{\vbf,p\}) := \ga_0 (p) $ and $\wh \gan (\{\vbf,p\}) := \gan (\vbf)$. Then
	$\Tc :=(H^{1/2} (\pa \D), L^2 (\pa \D) , H^{-1/2} (\pa \D),  \wh \ga_0, \ (-\ii) \wh \gan )$  
	is an  m-boundary tuple for the acoustic operator $\A_\Max$ \cite{K26}. 
\end{ex}

For an abstract closed densely defined symmetric operator $A$ in a Hilbert space  $\Xf$, let us assume that $(\Hs_{-,+},\Hs, \Hs_{+,-}, \Ga_0,\Ga_1)$ is an m-boundary tuple for $A^*$.
Then an \emph{abstract impedance boundary condition} \cite{EK22} takes the form 
$	Z \Ga_0 y = \ii \Ga_1 y  $,
where $Z:\dom Z \subseteq \Hs_{-,+} \to \Hs_{+,-}$.
Let the operator $A_Z $ be the restriction  of $A^*$ to 
the set $\dom A_Z$ consisting of all $y \in \dom A^*$ such that $\Ga_0 y \in \dom Z$ and $Z \Ga_0 y = \ii \Ga_1 y$.

\begin{prop}[\cite{EK22}, see also \cite{K26,K25}] \label{p:CrMDis}
(i) The operator $A_Z$ is m-dissipative in $\Xf$ if and only if $Z$ is closed and maximal accretive.
\item[(ii)] The operator $A_Z$ is selfadjoint if and only if $Z^\cross = - Z$.
\end{prop}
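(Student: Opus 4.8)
The plan is to reduce the m-boundary tuple $(\Hs_{-,+},\Hs,\Hs_{+,-},\Ga_0,\Ga_1)$ to an ordinary (Kochubei) boundary triple by means of the fixed pair of mutually $\cross$-adjoint homeomorphisms $V \in \Hom(\Hs,\Hs_{-,+})$ and $V^\cross \in \Hom(\Hs_{+,-},\Hs)$ of Section \ref{s:GenAcc}, and then to transfer the known characterization of m-dissipative and selfadjoint extensions in the trivially-rigged setting back to $Z$ via \eqref{e:V*ZV}, \eqref{e:V*Z*V}, and Theorem \ref{t:CrM-acrGen}. First I would introduce the transformed boundary maps $\wt\Ga_0 := V^{-1}\Ga_0$ and $\wt\Ga_1 := V^\cross \Ga_1$, both acting from $\dom A^*$ into the pivot space $\Hs$.

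The key observation is that $(\Hs,\wt\Ga_0,\wt\Ga_1)$ is a boundary triple for $A^*$. Using conjugate symmetry of the pairing and the defining relation $\<Vu,w\>_\Hs = (u \,|\, V^\cross w)_\Hs$, for $f,g \in \dom A^*$ one obtains
\[
\<\Ga_1 f, \Ga_0 g\>_\Hs = (\wt\Ga_1 f \,|\, \wt\Ga_0 g)_\Hs, \qquad \<\Ga_0 f, \Ga_1 g\>_\Hs = (\wt\Ga_0 f \,|\, \wt\Ga_1 g)_\Hs,
\]
so that the Green identity of Definition \ref{d:MBT} takes the standard boundary-triple form, while surjectivity of $(\wt\Ga_0,\wt\Ga_1)$ onto $\Hs\oplus\Hs$ follows from that of $(\Ga_0,\Ga_1)$ since $V,V^\cross$ are homeomorphisms. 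Setting $\wt Z := V^\cross Z V$ and applying $V^\cross$ to the condition $Z\Ga_0 y = \ii\Ga_1 y$ (writing $\Ga_0 y = V\wt\Ga_0 y$) yields the transformed impedance condition $\wt Z\wt\Ga_0 y = \ii\wt\Ga_1 y$; since the domains match, $A_Z = A_{\wt Z}$.

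It then remains to invoke the classical result for boundary triples (the content borrowed from \cite{P59,Kato,EK22}): the extension $A_{\wt Z}$ defined by $\wt Z\wt\Ga_0 = \ii\wt\Ga_1$ is m-dissipative in $\Xf$ iff $\wt Z$ is m-accretive, and selfadjoint iff $\wt Z^* = -\wt Z$. The dissipativity half is checkable directly from the transformed Green identity, which gives $\im(A_{\wt Z}y\,|\,y)_\Xf = -\re(\wt Z\wt\Ga_0 y\,|\,\wt\Ga_0 y)_\Hs$, so that $A_{\wt Z}$ is dissipative exactly when $\wt Z$ is accretive; the upgrade from dissipative to m-dissipative (equivalently, from accretive to maximal accretive) is the maximality part of the classical theory. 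Translating back, the equivalence of items (ii) and (iv) of Theorem \ref{t:CrM-acrGen} identifies m-accretivity of $\wt Z = V^\cross Z V$ with $Z$ being closed and maximal accretive, which proves (i). For (ii), \eqref{e:V*Z*V} gives $\wt Z^* = V^\cross Z^\cross V$, while $-\wt Z = V^\cross(-Z)V$; since $V,V^\cross$ are homeomorphisms, $\wt Z^* = -\wt Z$ holds iff $Z^\cross = -Z$.

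The main obstacle is not conceptual but lies in the sign- and conjugation-bookkeeping of the reduction: one must verify that the two boundary terms of the generalized Green identity, which pair an element of $\Hs_{+,-}$ with one of $\Hs_{-,+}$ in opposite orders, collapse to the symmetric pivot form under $V,V^\cross$, and one must track the factor $\ii$ in the impedance condition so that \emph{accretivity} of $Z$ (not dissipativity) corresponds to \emph{dissipativity} of $A_Z$, consistent with the convention warning noted after Proposition \ref{p:tPh59}. Once this bookkeeping is settled, parts (i) and (ii) follow mechanically from the cited classical theorem together with Theorem \ref{t:CrM-acrGen}.
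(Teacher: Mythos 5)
Your proposal is correct and follows essentially the same route that the paper relies on: the paper does not prove Proposition \ref{p:CrMDis} itself but defers to \cite{EK22} and \cite{K25}, and the reduction you carry out --- transporting the m-boundary tuple to an ordinary boundary triple via the mutually $\cross$-adjoint homeomorphisms $V$ and $V^\cross$, matching $A_Z$ with $A_{\wt Z}$ for $\wt Z = V^\cross Z V$, and then invoking the classical pivot-space characterization together with Theorem \ref{t:CrM-acrGen} and \eqref{e:V*ZV}--\eqref{e:V*Z*V} --- is exactly the mechanism used in those references and in the paper's own proofs of Theorem \ref{t:DtN-GIBC} and Proposition \ref{p:tPh59}. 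Your sign and conjugation bookkeeping (in particular the identity $\im(A_{\wt Z}y\,|\,y)_\Xf = -\re(\wt Z\wt\Ga_0 y\,|\,\wt\Ga_0 y)_\Hs$, which ties accretivity of $Z$ to dissipativity of $A_Z$) checks out, so nothing further is needed.
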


This proposition is implicitly present in  \cite[Sections 6.2 and Section 7]{EK22} (compare in \cite{EK22} Corollaries 6.4 and 7.2 with Remarks 6.1-6.3). In the explicit form, it can be found in \cite[Proposition 2.1]{K25} (for the particular case of  boundary triples, its analogue is well-known \cite{BHdS20,DM26}).

\subsection{Proofs of Theorems \ref{t:DtN-GIBC} and \ref{t:RandM-dis}}
\label{s:ProofA}

\begin{proof}[Proof of Theorem \ref{t:DtN-GIBC}]
We use  the homeomorphisms 
\begin{gather*}
V=(\De_\paD+I)^{-1/4}_{H^0 \shortto H^{1/2}} \in \Hom (L^2 (\paD),H^{-1/2} (\paD)), \\
V^\cross = (\De_\paD+I)^{-1/4}_{H^{-1/2} \shortto H^0} \in \Hom (H^{-1/2} (\paD),L^2 (\paD))
\end{gather*}
and the m-boundary tuple $\Tc$ of Example \ref{ex:acute}.
Then Proposition \ref{p:CrMDis} and Theorem \ref{t:CrM-acrGen} imply Theorem \ref{t:DtN-GIBC}.
\end{proof}

For the proof of Theorem \ref{t:RandM-dis}, we need the following technical tool.

\begin{prop}[Hanš lemma, e.g., \cite{H61}] \label{p:HTh}
	Let $T:\Om \to \Lc (\Xf)$ be a random bounded operator such that a.s. $T \in \Hom (\Xf)$. Then $T^{-1}$ is a random bounded  operator. 
\end{prop}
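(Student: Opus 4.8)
The plan is to bypass the fact that operator inversion is not continuous for the topology in which $T$ is measurable, by producing $T_\om^{-1}$ as the almost sure limit of a manifestly measurable iteration. First I would record two closure properties that follow from separability of $\Xf$ and the equivalence \eqref{e:w=sR} of weak and strong randomness: \textbf{(a)} if $\om\mapsto v_\om$ is a random $\Xf$-vector, then so are $\om\mapsto T_\om v_\om$ and $\om\mapsto T_\om^* v_\om$ (approximate $v_\om$ by simple functions and use that for each fixed $x\in\Xf$ the maps $\om\mapsto T_\om x$ and $\om\mapsto T_\om^* x$ are random vectors, the latter because $(T_\om^* x\,|\,g)_\Xf=\overline{(T_\om g\,|\,x)_\Xf}$ is measurable); and \textbf{(b)} the scalar function $\om\mapsto\|T_\om\|$ is measurable, since $\|T_\om\|=\sup_{f,g\in D}|(T_\om f\,|\,g)_\Xf|$ for a countable dense subset $D$ of the unit ball. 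By \eqref{e:w=sR} it then suffices to show, for each fixed $f\in\Xf$, that $\om\mapsto T_\om^{-1}f$ is a random $\Xf$-vector.

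Next I would set up the iteration. Fix $f\in\Xf$ and put $A_\om:=T_\om^* T_\om$, which is self-adjoint, nonnegative, and --- precisely because $T_\om\in\Hom(\Xf)$ almost surely --- uniformly positive for a.e. $\om$, namely $A_\om\ge c_\om I$ with $c_\om=\|T_\om^{-1}\|^{-2}>0$. Using $\|A_\om\|=\|T_\om\|^2$, I would choose the measurable step size $\tau_\om:=(1+\|T_\om\|^2)^{-1}$, which satisfies $0<\tau_\om<2/\|A_\om\|$ almost surely, and run the Landweber iteration $u_0^\om:=0$ and $u_{k+1}^\om:=u_k^\om-\tau_\om\,(A_\om u_k^\om-T_\om^* f)$.

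Then comes the verification of measurability and convergence. By closure property (a), together with the measurability of $\tau_\om$ and of $\om\mapsto T_\om^* f$, an induction on $k$ shows that each $\om\mapsto u_k^\om$ is a random $\Xf$-vector. For convergence, the error $e_k^\om:=u_k^\om-T_\om^{-1}f$ obeys $e_{k+1}^\om=(I-\tau_\om A_\om)\,e_k^\om$; since the spectrum of $\tau_\om A_\om$ lies in $[\tau_\om c_\om,\tau_\om\|A_\om\|]\subset(0,1)$, the self-adjoint operator $I-\tau_\om A_\om$ has norm $1-\tau_\om c_\om<1$, so $u_k^\om\to T_\om^{-1}f$ in $\Xf$ for a.e. $\om$. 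On the null set where $T_\om\notin\Hom(\Xf)$ (or the limit fails to exist) I would set the value to be $0$. A pointwise a.e. limit of random $\Xf$-vectors is again a random $\Xf$-vector, so $\om\mapsto T_\om^{-1}f$ is one, as required.

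The main obstacle is exactly the discontinuity of inversion in the strong operator topology, which forbids transporting measurability through $T\mapsto T^{-1}$ directly; the iteration is what circumvents it. Its delicate point is that the contraction factor $1-\tau_\om c_\om$ depends on the random lower bound $c_\om$, which is \emph{not} controlled uniformly in $\om$. This is harmless here because only pointwise (a.e.-$\om$) convergence is needed, so an $\om$-dependent, non-uniform rate suffices; crucially, the measurable choice of $\tau_\om$ requires only the upper bound $\|T_\om\|$, which is measurable by (b), and never the inaccessible quantity $c_\om$.
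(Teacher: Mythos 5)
Your argument is correct, but note that the paper itself gives no proof of this proposition: it is quoted as Hanš' lemma with a reference to \cite{H61} and to \cite{K26}, so there is nothing internal to compare against line by line. Your Landweber iteration is a legitimate self-contained proof. The two auxiliary facts (a) and (b) are sound: (a) uses exactly the Pettis-type equivalence \eqref{e:w=sR} together with simple-function approximation and the pointwise bound $\|T_\om(v_\om^n-v_\om)\|\le\|T_\om\|\,\|v_\om^n-v_\om\|$, and the adjoint $T^*$ is a random bounded operator because $(T_\om^*x\,|\,g)_\Xf=\overline{(T_\om g\,|\,x)_\Xf}$; (b) is the usual countable-supremum argument. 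The fixed point of your iteration is $(T_\om^*T_\om)^{-1}T_\om^*f=T_\om^{-1}f$, the error contracts with factor $\|I-\tau_\om A_\om\|\le 1-\tau_\om c_\om<1$ since $\si(\tau_\om A_\om)\subseteq[\tau_\om c_\om,\tau_\om\|A_\om\|]\subset(0,1)$, and an a.e. pointwise limit of random vectors is a random vector (completeness of $(\Om,\Fc,\PP)$ absorbs the exceptional null set). For comparison, the classical route in \cite{H61} (and the variant used in \cite{K26}) is in the same spirit but organized differently: one first checks that $c_\om=\min\bigl(\inf_{x\in D}\|T_\om x\|,\ \inf_{x\in D}\|T_\om^*x\|\bigr)$ over a countable dense subset $D$ of the unit sphere is measurable, partitions $\Om$ into the measurable sets $\{c_\om\ge 1/n\}$, and expands $T_\om^{-1}$ as a Neumann series with a deterministic parameter on each piece. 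Your version avoids the partition entirely because the step size $\tau_\om$ needs only the measurable upper bound $\|T_\om\|$ and the convergence rate is allowed to degenerate with $\om$; that is a clean and slightly more economical packaging of the same underlying idea.
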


\begin{proof}[Proof of Theorem \ref{t:RandM-dis}.]
	Let $V$ and $V^\cross$ be as in the proof of Theorem \ref{t:DtN-GIBC}.
For the acoustic system, a general parametrization of m-dissipative boundary conditions by contractions $K \in \Lc (L^2(\paD))$ 
 is given in \cite{K26} in the form 
 \begin{gather} \label{e:mdisBC}
	(K+I) V^{-1} \ga_0 (p)  + (K-I) V^\cross \gan (\vbf) = 0 .
\end{gather}
The acoustic operator $\wh \A_K$ associated with \eqref{e:mdisBC} is m-dissipative if and only if $K$ is a contraction. Moreover, the corresponding contraction $K$ is unique.
 
By \cite[Theorem 2.8]{K26}, $\wh \A_K$ is a random m-dissipative operator (in the resolvent sense) if $K$ is a random contraction (i.e., if $K$ is a random bounded operator such that 
a.s. $\|K\|\le 1$).
In order to prove Theorem \ref{t:RandM-dis}, it is enough to prove that, for a random bounded operator  $\Zc:\Om \to \Lc (H^{1/2},H^{-1/2})$ such that $\Zc$ is a.s. accretive, the boundary condition 
$\Zc  \ga_0 (p) =  \gan (\vbf)  $ can be reformulated in the form \eqref{e:mdisBC} with a random contraction $K$.

The passage from $\Zc  \ga_0 (p) =  \gan (\vbf)  $ to \eqref{e:mdisBC} 
can be done in the way described (in the deterministic case) in \cite{K25,K26}. Proposition \ref{p:HTh} ensures that this process preserves appropriate randomness of the involved operators. Namely, we first pass to the operator $\wt Z:=V^\cross Z V$, which is a random bounded operator in $L^2 (\paD)$ and is a.s. accretive. Hence, with probability 1, $\wt Z$ is m-accretive  and $\si (\wt Z) \subseteq \oCC_\rr$. In particular,  a.s.  $\wt \Zc +I \in \Hom (L^2 (\paD))$.  By Proposition \ref{p:HTh},
$(\wt \Zc +I)^{-1}$ is a random bounded operator.
Since a product of random bounded operators is random bounded (see, e.g., \cite{S84}), the Cayley transform $\Cc_{\wt \Zc} = (\wt \Zc- I)( \wt \Zc+I)^{-1}$ of $\wt \Zc$ is also a random bounded operator. 
A Cayley transform of an m-accretive operator is a contraction.
One checks by direct calculation  that with $K=\Cc_{\wt \Zc}$ the boundary condition \eqref{e:mdisBC} becomes equivalent to $\Zc  \ga_0 (p) =  \gan (\vbf)  $. 
\end{proof}
\end{appendices}


\end{document}